\numberwithin{equation}{section}
\theoremstyle{plain}
\newtheorem{thm}{Theorem}[section]
\newtheorem{coro}[thm]{Corollary}
\newtheorem{prop}[thm]{Proposition}
\newtheorem{lem}[thm]{Lemma}
\newtheorem{defi}[thm]{Definition}
\theoremstyle{definition}
\theoremstyle{remark}
\newtheorem{rem}[thm]{Remark}
\newcommand{\Psym}{\mathcal{P}_{\textrm{sym}}\left(X^N\right)}
\newcommand{\Psymlambda}{\mathcal{P}_{\textrm{sym},\overline{\lambda}}\left(X^N\right)}
\newcommand{\conv}{\textrm{conv}}
\newcommand{\ext}{\textrm{ext}}
\newcommand{\PQN}{\mathcal{P}_{\frac{1}{N}}\left(X\right)}
\newcommand{\EsymN}{E_{\textrm{sym}}^N}
\newcommand{\Pcoef}{P_{\textrm{coef}}}
\newcommand{\PNrepk}{\mathcal{P}_{N\textrm{-rep}}\left(X^k\right)}
\newcommand{\PNreptwo}{\mathcal{P}_{N\textrm{-rep}}\left(X^2\right)}
\newcommand{\PNreplambda}{\mathcal{P}_{N\textrm{-rep},\overline{\lambda}}\left(X^2\right)}
\newcommand{\PsymMonge}{\mathcal{P}_{\textrm{sym,Monge}}\left(X^N\right)}
\newcommand{\PsymMongeConv}{\mathcal{P}_{\textrm{sym,Monge}}^{\textrm{conv}}\left(X^N\right)}
\newcommand{\PNrepMonge}{\mathcal{P}_{N\textrm{-rep,Monge}}\left(X^2\right)}
\newcommand{\PNrepMongeConv}{\mathcal{P}_{N\textrm{-rep,Monge}}^{\textrm{conv}}\left(X^2\right)}
\title{Geometry of Kantorovich Polytopes and Support of Optimizers for Repulsive Multi-Marginal Optimal Transport on Finite State Spaces}
\author{\normalsize
Daniela V\"{o}gler\thanks{Faculty of Mathematics, Technische Universit\"at M\"unchen,
\texttt{voegler@ma.tum.de}, +49-89-289-17900 (Tel.), +49-89-289-17932 (Fax)}
}
\date{}
\begin{document}
\maketitle
\vspace{-4mm}
\begin{addmargin}[2em]{2em}
\footnotesize
\noindent {\bf Abstract.} We consider symmetric multi-marginal Kantorovich optimal transport problems on finite state spaces with uniform-marginal constraint. These problems consist of minimizing a linear objective function over a high-dimensional polytope, here referred to as Kantorovich polytope. The presented results are of split nature, computational and theoretical. Within the computational part only small numbers of marginals $N$ and marginal sites $\ell$ are considered. This restriction allows us to computationally determine all extreme points of the Kantorovich polytope and investigate how many of them are in compliance with the in optimal transport typical Monge ansatz. Singling out the results for $\ell=3$ discretization points and pairwise symmetric cost functions enables us to visually compare Kantorovich's to Monge's ansatz space for a varying number of marginals. Finally we present a necessary support-condition for optimizers which is inspired by the insights the said model problem on three sites provided. This result is not limited to the case of $\ell=3$ sites and applies to symmetric pair-costs whose diagonal entries lie above a cost-specific threshold. In case $N$ and $\ell$ display certain relationships the discussed condition provides an optimizer in Monge-form and implies its uniqueness as a solution of the considered Kantorovich optimal transport problem.
\end{addmargin}
\vspace{4mm}
\noindent \textbf{Keywords:} optimal transport, Monge's ansatz, N-representability, Birkhoff-von Neumann theorem, density functional theory, support-condition for optimizers\\[3mm]
\noindent \textbf{Acknowledgments:} The author thanks Gero Friesecke and S\"oren Behr for helpful discussions.
\section{Introduction}
\label{sec:Intro}
\noindent In general multi-marginal Kantorovich optimal transport (OT) problems aim at coupling $N$ probability measures $\lambda^{(1)}, \ldots , \lambda^{(N)}$ optimally with respect to a given cost function $c$ (see \eqref{eq:ProblemN} for a discrete symmetric OT problem). These problems arise in various fields of research, ranging from economics \cite{CE10, CMN10} through mathematical finance \cite{BHP13, GHT14} and image processing \cite{AC11, RPDB12} to electronic structure \cite{CFK13, BDG12}.
\newline
\newline
Here we consider a symmetric multi-marginal Kantorovich OT problem on finite state spaces given by
\begin{multline}
\label{eq:ProblemN}
\textrm{Minimize } \int_{X^N} c(x_1,\ldots, x_N) d\gamma(x_1,\ldots,x_N) \\ \textrm{ over } \gamma \in \Psym \textrm{ subject to } \gamma \mapsto \overline{\lambda}.
\end{multline}
$X$ denotes a finite state space as defined in \eqref{eq:FiniteStateSpace}, $c : X^N \to \mathbb{R} \cup \{+\infty\}$ an arbitrary symmetric cost function, $\overline{\lambda}$ the uniform marginal as defined in \eqref{eq:UniMar} and $\Psym$ the set of symmetric probability measures on $X^N$, where a probability measure $\gamma$ on $X^N$ is symmetric if 
\begin{multline*}
\gamma \left( A_1 \times \dots \times A_N \right) = \gamma\left( A_{\sigma(1)} \times \dots \times A_{\sigma(N)} \right) \\ \textrm{ for all subsets } A_1,\ldots,A_N \textrm{ of } X \textrm{ and all permutations } \sigma.
\end{multline*} 
Any $\gamma \in \Psym$ fulfills $\gamma \mapsto \overline{\lambda}$ if and only if $\gamma$ has equal one-point marginals $\overline{\lambda}$, i.e., 
\begin{equation*}
\gamma\left( X^{k-1} \times A_k \times X^{N-k} \right) = \overline{\lambda}(A_k) \textrm{ for all subsets } A_k \textrm{ of } X \textrm{ and all } k \in \{1,\ldots,N\}.  
\end{equation*} 
Multi-marginal OT problems of form \eqref{eq:ProblemN} were already considered in \cite{FV18} and \cite{GF18}. 
While \cite{GF18} discusses the validity of Monge's approach in the setting of 3 marginals and 3 sites, \cite{FV18} introduces a sufficient ansatz space for problem \eqref{eq:ProblemN} (see Section \ref{sec:ClassKan} as well as Remark \ref{rem:IntObsTwo} for information about the content of these papers). The present paper accompanies these previous considerations. In particular, some of the used nomenclature and notation is already introduced there. 
\newline
\newline
For finite state spaces 
\begin{equation}
\label{eq:FiniteStateSpace}
X=\{a_1,\ldots,a_{\ell}\}
\end{equation}
consisting of $\ell$ distinct points $a_1,\ldots,a_{\ell}$, the uniform probability measure 
\begin{equation}
\label{eq:UniMar}
\overline{\lambda} = \sum_{i=1}^\ell \frac{1}{\ell} \delta_{a_i}
\end{equation}
on $X$ is the prototypical marginal. The corresponding multi-marginal Kantorovich OT problems, i.e., problems of form \eqref{eq:ProblemN} with $\mathcal{P}\left( X^{N} \right)$ replacing $\Psym$, appear directly as assignment problems (see \cite{Sp00, BDM12} for reviews) and arise from continuous problems via equi-mass discretization \cite{CFM14}. 
\newline
\newline
Note that the restriction to symmetric probability measures in problem \eqref{eq:ProblemN} is motivated by a physical application. Modeling the electronic structure of a molecule with $N$ electrons in a discretized setting, is a prototypical  application of multi-marginal OT on finite state spaces. In this context, $X$ corresponds to a set of $\ell$ discretization points in $\mathbb{R}^3$ and any coupling $\gamma$ of the $N$ marginals $\overline{\lambda}, \ldots, \overline{\lambda}$ describes a joint probability distribution regarding the electron positions in an $N$-electron molecule. Then the marginal condition ensures that each discretization point is occupied equally often and the cost function $c : X^N \to \mathbb{R} \cup \{ + \infty\}$ embodies the electron interaction energy. As electrons are indistinguishable the considered cost functions are usually symmetric, i.e., invariant under argument permutation. These symmetric cost functions are 'dual' to the set of symmetric probability measures on the product space $X^N$ in the following sense: There always exists an optimal coupling of $\overline{\lambda}, \ldots, \overline{\lambda}$ that is symmetric. 
\newline
\newline
The interaction energy between electrons displays a pairwise structure, i.e., $c(x_1,\ldots,x_N) = \sum_{1\leq i < j \leq N} v(x_i,x_j)$, with the Coulomb cost $\sum_{1\leq i < j \leq N} \frac{1}{|x_i-x_j|}$ being the prototypical example. Here $|\cdot|$ is the Euclidean norm in $\mathbb{R}^d$. As discussed in Section \ref{sec:IntroRed}, this pairwise structure allows us to reformulate the multi-marginal OT problem \eqref{eq:ProblemN} as 
\begin{equation}
\label{eq:ProblemTwo}
\textrm{Minimize } \int_{X^2} v(x,y)  d\mu(x,y) \textrm{ over } \mu \in \PNreptwo \textrm{ subject to } \mu \mapsto \overline{\lambda}.
\end{equation} 
This reformulated problem was initially introduced in \cite{FMPCK13}. In \eqref{eq:ProblemTwo}, $\PNreptwo \subseteq \mathcal{P}\left(X^2\right)$ can be interpreted as a 'reduced version' of $\Psym$. 
\newline
\newline
The goal of this paper is to present new insights into problem \eqref{eq:ProblemN} which arose out of a careful consideration of the polytope formed by the admissible trial states. Within this consideration the role Monge states take on in the said polytope is investigated.
\newline
\newline
One of the central questions in the theory of optimal transportation is: Under which assumptions exists an optimal coupling that is supported on a graph (over the first variable)? Such optimizers are then called Monge-solutions (see \eqref{eq:MongeOne} - \eqref{eq:MongeThree}). 
In the case of two marginals this question is well understood; the existence of Monge-solutions is always respectively under very general conditions guaranteed (see the renowned Birkhoff-von Neumann theorem \cite{Bi46, vN53} regarding finite state spaces respectively, e.g., \cite{Vi09} regarding continuous state spaces). 
For multiple marginals the understanding of this question does not reach the same generality. However, there are isolated examples for Monge- and non-Monge-solutions. For the former, see \cite{He02, Ca03, Pa11, CFK13, BDG12, CDD13} as well as the fundamental paper by Gangbo and \'{S}wi\k{e}ch \cite{GS98} for an interesting selection. For the latter, we refer the interested reader to \cite{CN08, Pa12, FMPCK13, Pa13, CFP15, MP17, GKR18, GF18} regarding continuous state spaces as well as to \cite{Cs70, Kr07, LL14, GF18} regarding finite state spaces.
\newline
\newline
In order to understand Monge's approach in the present setting, we first take a quick glance at the 'unsymmetrized' OT problem, i.e., problems of form \eqref{eq:ProblemN} with $\mathcal{P}\left(X^N\right)$ replacing $ \Psym$. Then, an optimal coupling $\gamma \in \mathcal{P}\left( X^{N} \right)$ of the $N$ marginals $\overline{\lambda},\ldots,\overline{\lambda}$ is a Monge-solution if 
\begin{equation}
\label{eq:UnsMonge}
\gamma = \sum_{\nu = 1}^\ell \frac{1}{\ell} \delta_{T_1(a_\nu)}\otimes \dots \otimes \delta_{T_N(a_\nu)}  \textrm{ for } N \textrm{ permutations } T_1, \ldots, T_N  : X \to X,
\end{equation}
where $T: X \to X$ is a permutation if there exists a permutation of indices $\tau : \{ 1, \ldots, \ell \} \to \{ 1, \ldots, \ell \}$ such that $T\left(a_\nu \right) = a_{\tau(\nu)}$ for all  $\nu \in \{ 1, \ldots, \ell \}$. Demanding that the $T_k$s are permutations ensures that $\gamma$ is indeed a coupling of $\overline{\lambda},\ldots,\overline{\lambda}$: $T: X \to X$ is a permutation if and only if it pushes the uniform measure forward to itself, i.e., $T_\#\overline{\lambda} = \overline{\lambda}$. Here for any probability measure $\lambda = \sum_{\nu=1}^\ell \lambda_\nu \delta_{a_\nu}$ on $X$ and any map $T : X \to X$ the push-forward $T_\#\lambda$ of $\lambda$ under $T$ is defined by  $T_\#\lambda = \sum_{\nu=1}^\ell \lambda_\nu \delta_{T(a_\nu)}$. One may choose $T_1 = \textrm{id}$, i.e., $T_1(a) = a$ for all $a \in X$,  by re-ordering the sum in \eqref{eq:UnsMonge}. 
\newline
Regarding \eqref{eq:ProblemN} an admissible trial state $\hat{\gamma}$ is referred to as a (symmetrized) Monge state if it is the symmetrization of a probability measure $\gamma$ of form \eqref{eq:UnsMonge}, i.e., 
\begin{equation}
\label{eq:MongeOne}
\hat{\gamma}=\sum_{\nu=1}^\ell \frac{1}{\ell} S\delta_{T_1(a_\nu)} \otimes \dots \otimes \delta_{T_N(a_\nu)}
\end{equation}
such that 
\begin{equation}
\label{eq:MongeTwo}
T_k \# \overline{\lambda}= \overline{\lambda} \textrm{ for all } k\in \{1,\ldots,N\},
\end{equation}
or equivalently,
\begin{equation}
\label{eq:MongeThree}
T_1,\ldots,T_N:X\to X \textrm{ are permutations.}
\end{equation}
Here $S$ denotes the linear symmetrization operator in $N$ variables as defined in \eqref{eq:SymOp}. Probability measures on $X^N$ of form \eqref{eq:MongeOne}-\eqref{eq:MongeThree} are also said to be of Monge-type or in Monge-form and restricting the minimization problem \eqref{eq:ProblemN} to such measures yields the corresponding Monge problem. 
\newline
\newline 
In Section \ref{sec:ClassKan} the 'Kantorovich-coupling-polytope', $\Psymlambda$, as defined in \eqref{eq:SetKanCou}, will be identified with the 'coefficient-polytope' $\Pcoef$, as introduced in \eqref{eq:Pcoef}. Monge states then correspond to integer elements of the latter of the two polytopes. Both, the reformulation as well as the identification of Monge states with integer coefficients are based on results in \cite{FV18}.
\newline
This different view on the set of admissible trial states of problem \eqref{eq:ProblemN} makes Monge's approach more accessible, in the sense of, it is easy to decide whether a given coefficient vector in $\Pcoef$ embodies a Monge state or not. This grasp of the Monge concept allows us to numerically quantify the insufficiency of Monge's ansatz which in the present setting is established in \cite{GF18}. In more detail, for small problem-parameters we determine all extremal coefficients and partition them into a Monge and a non-Monge class. The mere results of this classification are interpreted and complemented by 'small' theoretical results building upon the numerical ones. 
\newline
In Section \ref{sec:IntroRed} the same numerical analysis is performed under the additional assumption of the cost function $c$ in \eqref{eq:ProblemN} displaying pairwise structure.
\newline
\newline 
In Section \ref{sec:VisComp}, we will consider a model problem of optimally coupling the $N$ marginals $\overline{\lambda},\ldots, \overline{\lambda}$ with respect to a cost function of pairwise symmetric structure, where the finite state space $X$ consists only of three states, i.e., $X = \{a_1,a_2,a_3\}$. In particular $\overline{\lambda}$ is the uniform probability measure on $\{a_1, a_2, a_3\}$ and the domain of the cost function is given by $\{a_1, a_2, a_3\}^N$. The present setting allows us to draw a visual comparison between Kantorovich's and Monge's ansatz as depicted in Figure \ref{fig:KanMonge}. We further compare both OT approaches by volume of (the convex hull of) the respective set of admissible trial states and establish a computationally simple upper bound on the optimal value in \eqref{eq:ProblemTwo}. 
\begin{figure}[h]
\captionsetup[subfigure]{labelformat=empty}
\centering
\hspace{1mm}
\subfloat{\includegraphics[width=0.44\linewidth]{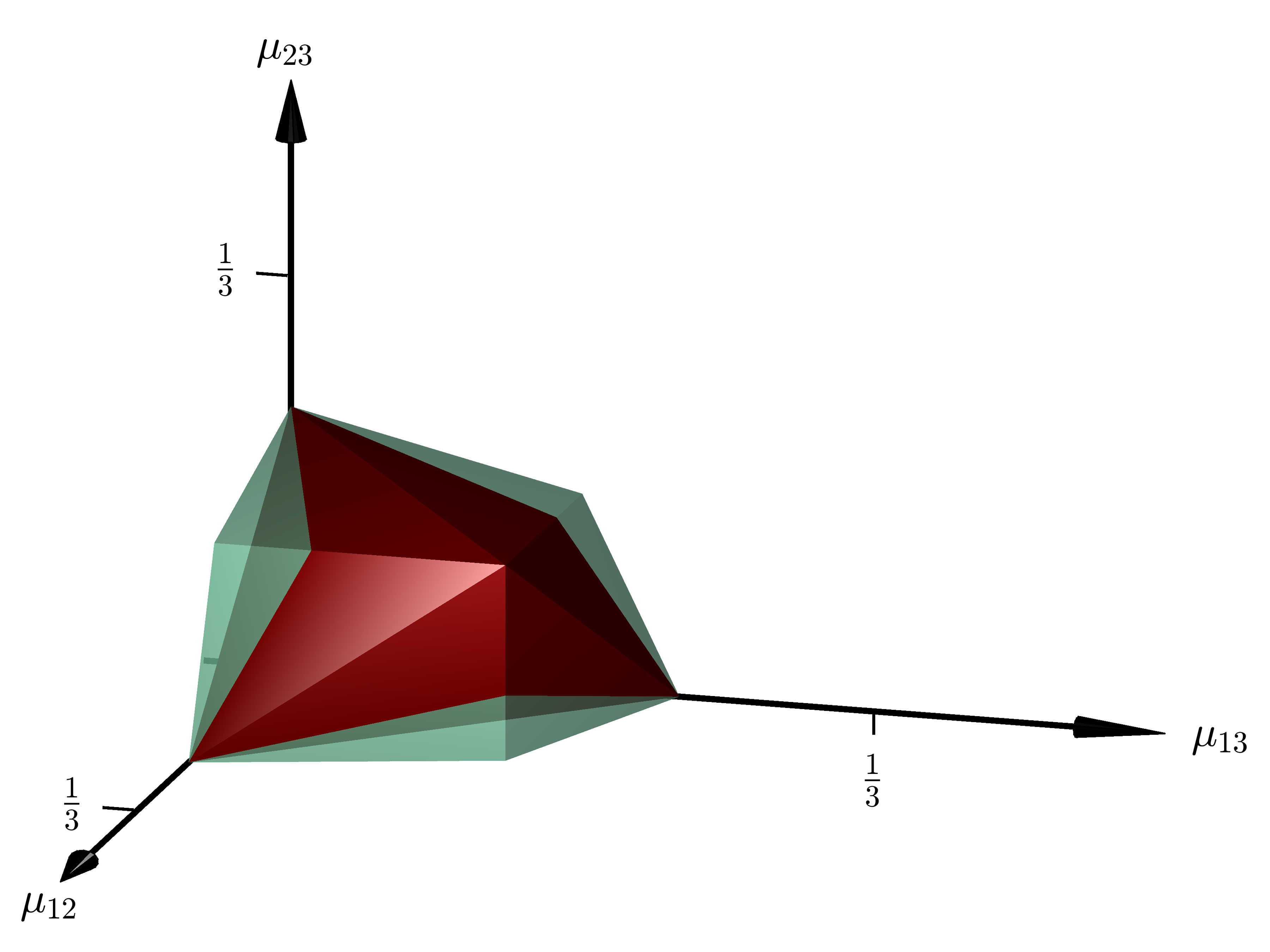}}
\hspace{1mm}
\subfloat{\includegraphics[width=0.44\linewidth]{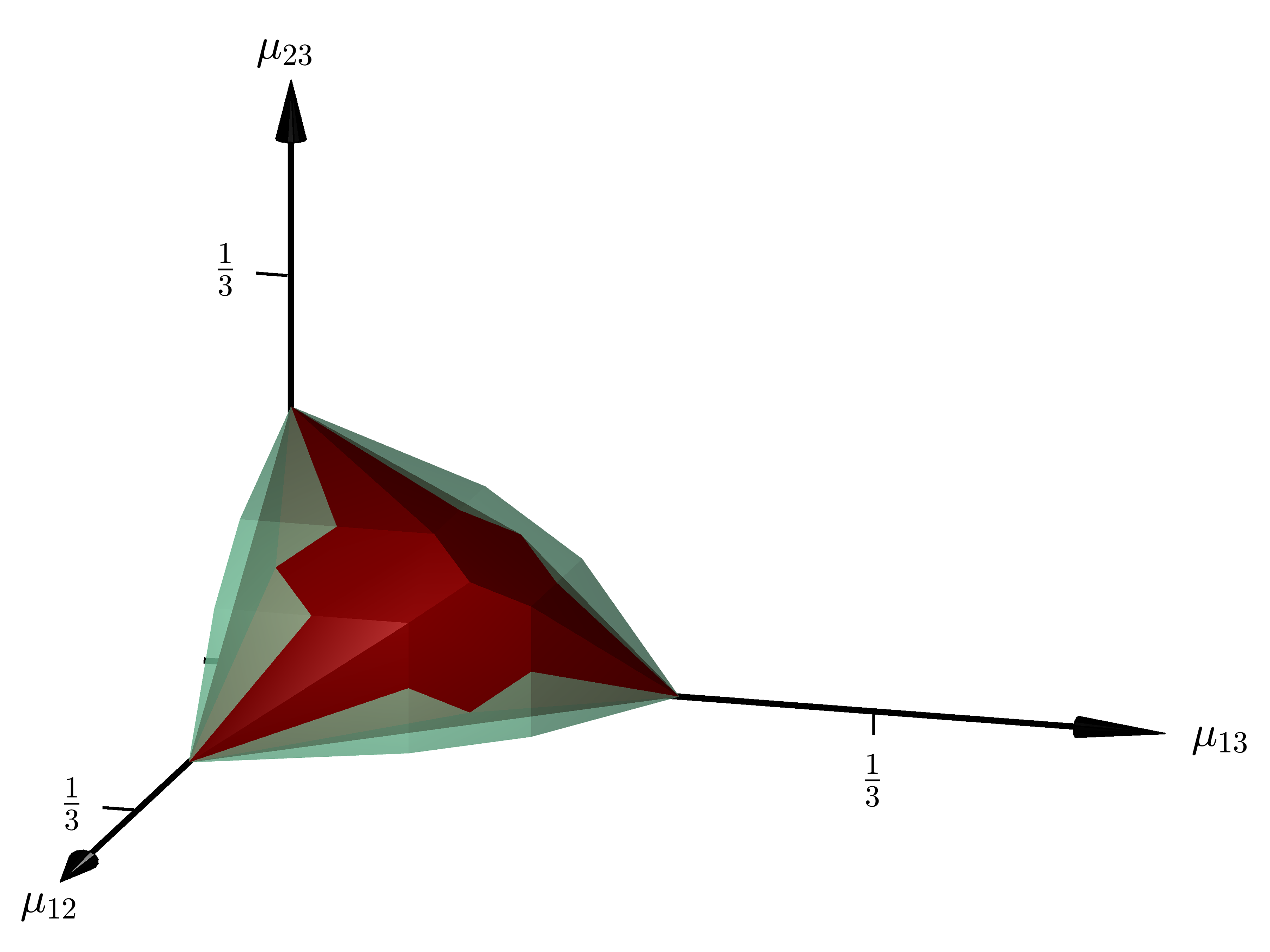}}
\hfill
\\
\hspace{1mm}
\subfloat{\includegraphics[width=0.44\linewidth]{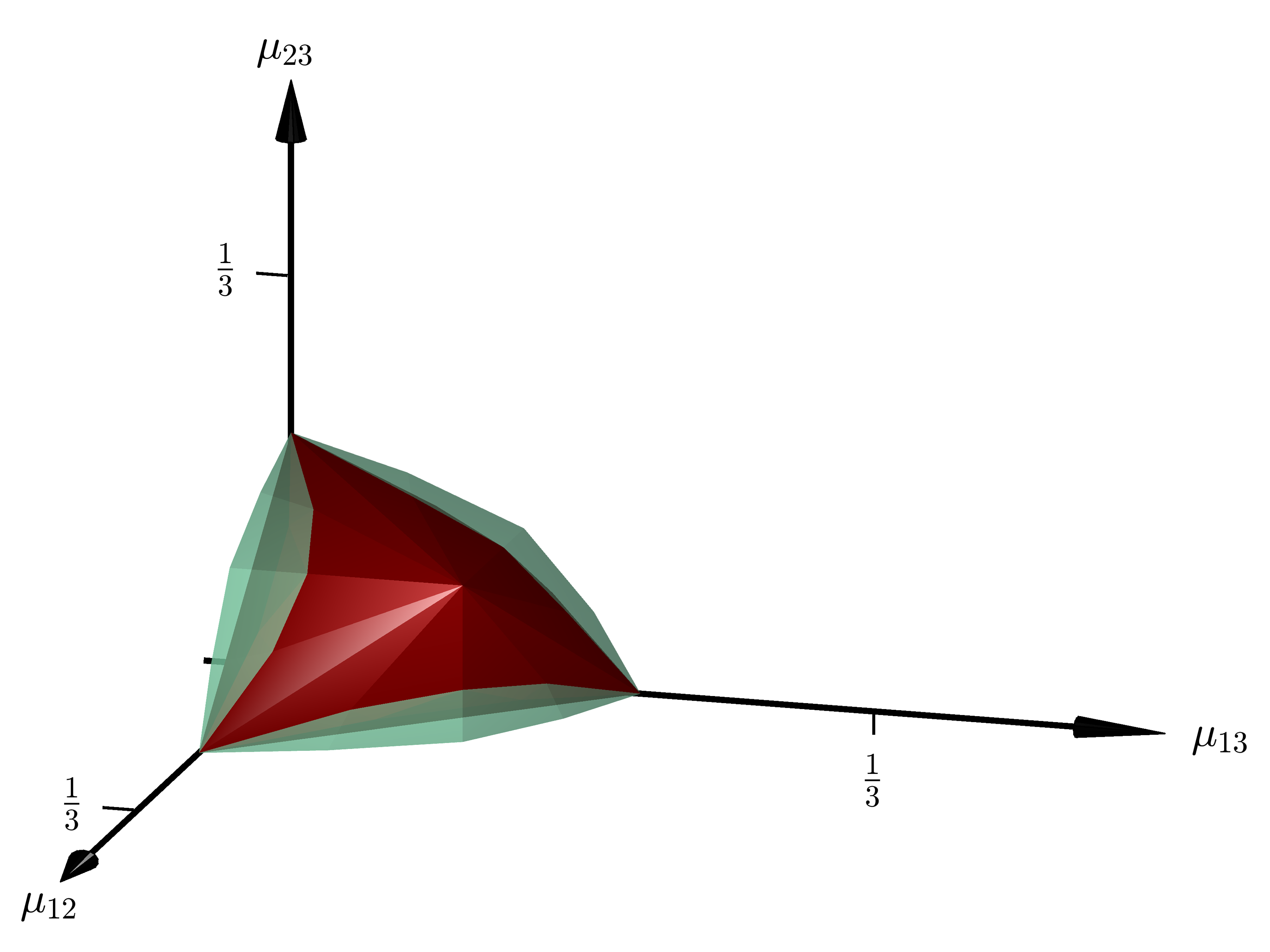}}
\hspace{1mm}
\subfloat{\includegraphics[width=0.44\linewidth]{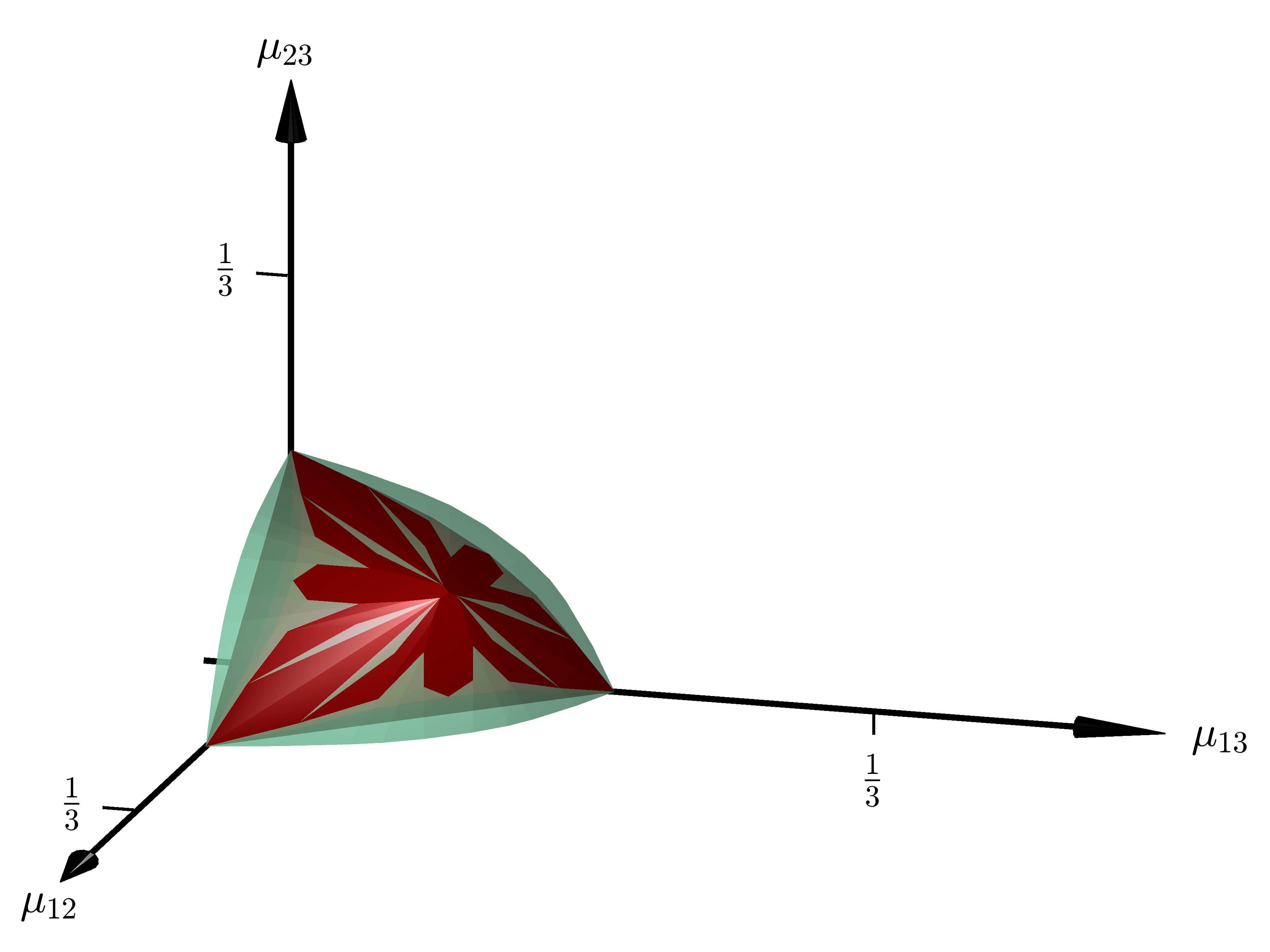}}
\hfill
\caption{The reduced Kantorovich (see \eqref{eq:DefPNreplambda}) respectively Monge (see \eqref{eq:DefNrepMongeSetConv}) polytope for $N$ marginals and $3$ states is visualized for $N=3$ (top-left), $4$ (top-right), $6$ (bottom-left) and $10$ (bottom-right) in green respectively red. The elements $\left(\mu_{ij}\right)_{i,j=1}^3$ of the polytopes are parametrized by their off-diagonal entries $\mu_{12},\mu_{13}$ and $\mu_{23}$. The present figure recovers the illustration of the reduced polytopes in the case of three marginals in \cite{GF18}. If a two-dimensional face of the reduced Monge polytope belongs to the boundary of the reduced Kantorovich polytope the occupied area is depicted in red.}
\label{fig:KanMonge}
\end{figure}
\newline
\newline 
\noindent Taking a look at Figure \ref{fig:KanMonge} the reader may notice that in each of the illustrations one of the extreme points creates a 'peak' in the front of the polytope. As indicated by the coloring each of these extreme points is of Monge-type. In Section \ref{sec:VisComp}, more specifically Theorem \ref{the:ModProp}, these 'peaks' are identified as the unique solutions of OT problems with respect to very rudimentary repulsive cost functions based on the discrete metric. The results of Section \ref{sec:ExtremalCoefficients} originated from the idea 'peaks solve repulsive OT problems'. For any number of marginals $N \geq 2$ and any number of states $\ell \geq 2$ we consider a large class of repulsive pair-costs with the underlying assumption on these cost functions being that their diagonal entries are constant and 'big enough' when compared to their off-diagonal entries. We show that in the given setting any optimizer of problem \eqref{eq:ProblemN} only gives mass to tupels $(x_1,\ldots,x_N)\in X^N$ for which the 'appearance-frequency' of elements of the finite state space $X$ is as uniform as possible (see Theorem \ref{the:SupOptC}). So for $N=5$ and $\ell = 3$ $(a_1, a_2, a_1, a_3, a_2)$ is a valid support-tupel whereas $(a_1,a_2,a_1,a_3,a_1)$ is not. This support-condition is more explicit than the in OT common notion of c-cyclical monotonicity and for certain parameter constellations it paints a very thorough picture of what optimizers look like. In case
\begin{equation}
\label{eq:eqparameterchoice}
N=k\cdot \ell \mathrm{\ for \  some \ } k \in \mathbb{N} \mathrm{\ or \ } \left(N-\left\lfloor \frac{N}{\ell} \right\rfloor \cdot \ell \right) \in \{\ell-1,1\}
\end{equation} 
the discussed condition provides an optimizer in Monge-form and implies its uniqueness. This optimizer represents the depicted 'peak' for $\ell=3$ and a higher-dimensional analogue for $\ell > 3$. Note that for $\ell = 3$ states \eqref{eq:eqparameterchoice} is fulfilled for any $N \geq 2$. So to put it in a nutshell, for any pair of parameters $N,\ell \geq 2$ fulfilling \eqref{eq:eqparameterchoice} we provide a large class of repulsive pair-costs which yield the idea 'peaks solve repulsive OT problems' to be true. As for why this identification of 'peaks' as optimizers is limited to parameter choices $N,\ell$ fulfilling \eqref{eq:eqparameterchoice} note the following. When keeping the number of states $\ell>3$ fixed and letting $N$ tend to $+\infty$ starting at $N=\ell$ the geometric behaviour can be described as follows: While the 'peak' remains intact for $N=\ell$ as well as $N=\ell+1$, for $N=\ell+2$ its representing measure (see Definition \ref{def:NRep}) on the product space $X^N$ blossoms into multiple extreme points; for $N=2\ell-1$ the blossom retracts into a closed state recreating the 'peak'.
\newline
In order to prove Theorem \ref{the:SupOptC}, we establish a lower bound on the nonzero entries of extreme points of the 'coefficient-polytope' $\Pcoef$, which - we believe - is in itself an interesting result.

\section{Classification of the Extreme Points of a Kantorovich Polytope}
\label{sec:ClassKan}
Throughout the paper we will consider the finite state space $X$ given by \eqref{eq:FiniteStateSpace}. We will denote the set of probability measures on $X$ as $\mathcal{P}(X)$. Each such probability measure $\lambda \in \mathcal{P}(X)$ can be canonically identified with a vector in $\mathbb{R}^{\ell}$ via $\lambda_i:=\lambda({a_i})$. The vector $(\lambda_1, \ldots,\lambda_{\ell})$ fulfills $\sum_{i=1}^{\ell} \lambda_i=1$ and $\lambda_i\geq0$ for $i \in \{1,\ldots,\ell\}$ and is therefore an element of the unit simplex. The probability measure $\lambda$ can then be written as $\lambda=\sum_{i=1}^{\ell} \lambda_i\delta_i,$ where here and below we use the shorthand notations
\begin{equation}
\label{eq:ShortDelta}
\delta_i:=\delta_{a_i}, \hspace{2cm} \delta_{i_1\ldots i_N}:=\delta_{a_{i_1}}\otimes \ldots \otimes \delta_{a_{i_N}}
\end{equation}
for $a_i\in X$ being a single point in the finite state space and $(a_{i_1},\ldots,a_{i_N})$ being an element of the product space $X^N$.
\newline
\newline
As announced in the Introduction, we will now take a closer look at the set of admissible trial states of problem 
\eqref{eq:ProblemN}, i.e., the polytope
\begin{equation}
\label{eq:SetKanCou}
\Psymlambda:=\left\{\gamma \in \Psym: \gamma \mapsto \overline{\lambda} \right\}.
\end{equation}
From this point on, we will refer to the elements of this set as \textit{symmetric Kantorovich couplings}. The set $\Psymlambda$ itself will be called \textit{(symmetric) Kantorovich polytope for $N$ marginals and $\ell$ states}. As within this paper we focus our attention on the symmetric case, the term symmetric will be dropped from time to time. It is easy to see that, as a result of the linearity of the marginal constraint and the finiteness of the state space $X$, $\Psymlambda$ is a compact and convex set in $\mathbb{R}^{{\ell}^N}$ and therefore by Minkowski's theorem (see, e.g., \cite{Ho94}) the convex hull of its extreme points. 
\newline
\newline
Recall the following basic definitions and notions of convexity (see, e.g., \cite{Ho94, Ro97}). For $y_1,\ldots,y_n \in \mathbb{R}^m$ and $\lambda_1,\ldots,\lambda_n \geq 0$ such that $\sum_{i=1}^n \lambda_i=1$ 
\begin{equation*}
\lambda_1y_1+\dots+\lambda_ny_n=\sum_{i=1}^n \lambda_iy_i
\end{equation*}
is called a \textit{convex combination of the points} $y_1,\ldots,y_n$. A subset $K\subseteq \mathbb{R}^m$ is called \textit{convex} if for each finite selection of points in $K$ each possible convex combination of these points is again contained in $K$. For a subset $V\subseteq \mathbb{R}^m$ the \textit{convex hull of} $V$, denoted as $\conv(V)$, corresponds to the set of all possible convex combinations of a finite selection of points in $V$. Obviously a set $K\subseteq \mathbb{R}^m$ is convex if and only if it is equal to its convex hull. Finally an element $k$ of the convex set $K\subseteq\mathbb{R}^m$ is called an \textit{extreme point} if $k=\lambda_1y_1+\lambda_2y_2$ for some $y_1,y_2\in K$ and $\lambda_1,\lambda_2>0$ such that $\lambda_1+\lambda_2=1$ implies $y_1=k=y_2$. For a considered convex set $K$ the set of extreme points will from now on be denoted as $\ext(K)$. 
\newline
\newline
As $\Psymlambda$ is equal to the convex hull of its extreme points, we can use the extreme points to describe the convex structure of the set of symmetric Kantorovich couplings. Now it follows by a simple contradiction argument that for any given linear objective function there is always an optimizer that is an extreme point. Moreover, in our setting of finite states spaces, for any extreme point $\gamma^*$ there is a function $c:X^N \to \mathbb{R}$ such that 
\begin{multline}
\label{eq:exposed}
\int_{X^N} c(x_1,\ldots,x_N) d\gamma(x_1,\ldots,x_N) > \int_{X^N} c(x_1,\ldots,x_N) d\gamma^*(x_1,\ldots,x_N) \\\textrm{ for any } \gamma \in \Psymlambda,
\end{multline}
i.e., there is a cost function such that $\gamma^*$ is the unique optimizer of the corresponding OT problem.  This is a result of the fact that $\Psymlambda$ is a bounded polyhedron, i.e., a polytope, of finite dimension and therefore only possesses finitely many extreme points each of whom is itself an exposed point (see, e.g., \cite{Ro97}), i.e., a point in the set $\Psymlambda$ that fulfills \eqref{eq:exposed} for some cost function $c:X^N\to \mathbb{R}$. As for any cost function $c:X^N \to \mathbb{R}$ there is always an optimizer that is an extreme point of $\Psymlambda$ and vice versa for any extreme point $\gamma^*$ of $\Psymlambda$ there is a cost function $c:X^N \to \mathbb{R}$ such that $\gamma^*$ is the unique optimizer, analyzing, how many of the extreme points of $\Psymlambda$ are of Monge-type, is a good approach to investigate the validity of Monge's ansatz. Recall that in the given setting a probability measure $\gamma \in \Psymlambda$ is said to be of Monge-type or in Monge-form if there are $N$ permutations $\tau_1,\ldots,\tau_N:\{1,\ldots,\ell\} \to \{1,\ldots,\ell\}$ such that
\begin{equation*}
\gamma= \sum_{i=1}^{\ell} \frac{1}{\ell} S \delta_{\tau_1(i)\tau_2(i)\ldots\tau_N(i)},
\end{equation*}
where the \textit{symmetrization operator} $S:\mathcal{P}\left(X^N\right) \to \Psym$ is defined by
\begin{equation}
\label{eq:SymOp}
\left(S\gamma \right)\left(A_1\times \dots \times A_N\right)=\frac{1}{N!} \sum_{\sigma\in S_N} \gamma\left(A_{\sigma(1)}\times \dots \times A_{\sigma(N)}\right) \textrm{ for all } A_1,\ldots,A_N \subseteq X
\end{equation}
with $S_N$ being the group of all permutations on the set $\{1,\ldots,N\}$.
\newline
\newline
As in the given setting it is rather inconvenient and long-winded to check whether a given symmetric Kantorovich coupling is of Monge-type or not, we will derive in the following an alternative LP-formulation of problem \eqref{eq:ProblemN}, where Monge-states will correspond exactly to (rescaled) integer points in the corresponding polytope of admissible trial states. We start by taking a closer look at the convex geometry of the set of symmetric probability measures on the product space $X^N$. 
\newline
\newline
Note that a probability measure $\gamma \in \Psym$ is an extreme point of $\Psym$ if and only if it is of the form 
\begin{equation}
\label{eq:ExtrSym}
S\delta_{i_1 \ldots i_N} \textrm{ for some } 1\leq i_1\leq \dots \leq i_N\leq \ell
\end{equation}
(see \cite{FV18}). Therefore symmetric Kantorovich couplings, which are of Monge-type, are an average of $\ell$ not necessarily distinct extremal symmetric probability measures on the product space $X^N$ with respect to the uniform measure. Below we will elaborate further on this characterization of couplings in Monge-form, which will be the basis for identifying Monge-states with the (rescaled) integer points in a certain polytope. 
\newline
\newline
From now on, we will denote the set of extremal symmetric probability measures, i.e., measures of the form \eqref{eq:ExtrSym}, as $\EsymN$. It was shown in \cite{FV18} that $\EsymN$ contains $\tbinom{N+\ell-1}{N}$ elements.
\newline
As for each pair of these extreme points their support is disjoint, one can immediately deduce the following result.
\begin{prop}
$\Psym$ is a simplex, i.e., the extremal symmetric probability measures on $X^N$ are affinely independent. 
\end{prop}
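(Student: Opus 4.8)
The plan is to prove the equivalent statement that the measures in $\EsymN$ are linearly independent as elements of the (finite-dimensional) space of signed measures on $X^N$; since each of them has total mass one, linear independence forces affine independence, and by definition the convex hull of finitely many affinely independent points is a simplex. Combined with the fact recalled above that $\Psym$ is a compact convex polytope and hence equals $\conv\left(\EsymN\right)$, this yields the proposition.

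First I would make the disjoint-support remark precise. For a sorted multi-index $1 \le i_1 \le \dots \le i_N \le \ell$, formula \eqref{eq:SymOp} shows that $S\delta_{i_1 \ldots i_N}$ is supported exactly on the $S_N$-orbit $\left\{ \left( a_{i_{\sigma(1)}}, \ldots, a_{i_{\sigma(N)}} \right) : \sigma \in S_N \right\} \subseteq X^N$ and assigns strictly positive mass to each point of this orbit. Any point of $X^N$ determines its own sorted multi-index uniquely, so two distinct elements of $\EsymN$ are supported on disjoint $S_N$-orbits; in particular their supports are pairwise disjoint subsets of $X^N$.

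The core step is then a short evaluation argument. Enumerate $\EsymN = \{\mu_1, \ldots, \mu_M\}$ with $M = \tbinom{N+\ell-1}{N}$ and suppose $\sum_{k=1}^{M} \alpha_k \mu_k = 0$. Fixing $k$ and choosing any point $x$ in the support of $\mu_k$, we evaluate the identity on the singleton $\{x\}$: since $\mu_j(\{x\}) = 0$ for every $j \neq k$ by disjointness of supports, only the $k$-th term survives and we get $\alpha_k \mu_k(\{x\}) = 0$; as $\mu_k(\{x\}) > 0$ this forces $\alpha_k = 0$. Since $k$ was arbitrary, the $\mu_k$ are linearly independent, hence affinely independent, and $\Psym$ is a simplex.

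I do not expect a genuine obstacle here: once the supports are seen to be disjoint, affine independence is essentially automatic. The only points requiring a little care are the bookkeeping that $S\delta_{i_1 \ldots i_N}$ is indeed supported on the full permutation orbit with positive weights (immediate from the definition \eqref{eq:SymOp} of the symmetrization operator $S$) and the invocation of the standard fact that "convex hull of affinely independent points" is precisely the definition of a simplex.
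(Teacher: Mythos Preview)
Your argument is correct and is exactly the approach the paper takes: the paper simply remarks, immediately before the proposition, that ``for each pair of these extreme points their support is disjoint'' and states that the result follows. You have spelled out the easy implication from pairwise disjoint supports to linear (hence affine) independence that the paper leaves implicit.
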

\noindent Hence, for every $\gamma \in \Psym$ there is a unique way to represent $\gamma$ as a convex combination of extremal symmetric probability measures on $X^N$, i.e., there is a unique non-negative coefficient vector $\alpha \in \mathbb{R}^{|\EsymN|}$ fulfilling $\sum\alpha_{i_1 \ldots i_N}=1$ such that
\begin{equation}
\label{eq:ConComRep}
\gamma=\sum_{1\leq i_1 \leq \dots \leq i_N \leq \ell} \alpha_{i_1 \ldots i_N} S\delta_{i_1 \ldots i_N}.
\end{equation}
As the extreme points of $\Psym$ can be parametrized using their one-point marginal,  $\alpha$ can be interpreted as a probability measure on the set of these one-point marginals. 
\newline
Given the $k$-point marginal map $M_k:\mathcal{P}\left(X^N\right) \to \mathcal{P}\left(X^k\right)$ for $1\leq k\leq N-1$ with 
\begin{equation}
\label{eq:kMarMap}
\left(M_k\gamma \right)(A):= \gamma \left(A\times X^{N-k} \right) \textrm{ for all } A\subseteq X^k
\end{equation} 
for $\gamma \in \mathcal{P}\left(X^N\right)$, with the convention $M_N=id$, note that $M_1$ is a bijection from the set of extremal symmetric probability measures on $X^N$, i.e., measures of the form \eqref{eq:ExtrSym}, to the set of $\frac{1}{N}$-quantized probability measures 
\begin{equation}
\label{eq:PQN}
\PQN:=\left\{ \lambda \in \mathcal{P}\left(X\right): \lambda(\{i\}) \in \left\{0,\frac{1}{N},\ldots,1\right\} \right\}
\end{equation}
(see \cite{FV18}). Hereby the one-point marginal of a measure of form \eqref{eq:ExtrSym} is an empirical measure of the indices $(i_1,\ldots,i_N)$, it holds
\begin{equation*}
M_1S\delta_{i_1 \ldots i_N} = \frac{1}{N}\sum_{j=1}^N \delta_{i_j}.
\end{equation*} 
In the following $\psi_N: \PQN \to \EsymN$ will denote the corresponding inverse function.
\newline
This parametrization gives rise to the \textit{coefficients-to-coupling map} $R:\mathcal{P}\left( \PQN \right) \to \Psym$. It maps an arbitrary probability measure $\alpha$ on $\PQN$, which, via the underlying parametrization, corresponds to the coefficients in the representation \eqref{eq:ConComRep}, to the corresponding coupling $\gamma$, i.e., in pedestrian notation
\begin{equation*}
R\alpha=\sum_{\lambda\in \PQN} \alpha_\lambda \psi_N(\lambda),
\end{equation*}
or more elegantly 
\begin{equation*}
R\alpha=\int_{\PQN} \psi_N(\lambda) d\alpha(\lambda).
\end{equation*}
As $\Psym$ is a simplex, $R$ is bijective. This enables us to establish the following isomorphic relationship between two alternative formulations of the set of symmetric Kantorovich couplings. 
\begin{lem}[isomorphic relationship between couplings and coefficients] 
\label{lem:IsoRel}
The coefficients-to-coupling map $R$ maps the polytope 
\begin{equation}
\label{eq:Pcoef}
\Pcoef:=\left\{ \alpha \in \mathbb{R}^{|\EsymN|}: A\alpha = \overline{\lambda}, \alpha \geq 0\right\}
\end{equation}
linearly and bijectively to the set of symmetric Kantorovich couplings, i.e., $\Psymlambda$ defined in \eqref{eq:SetKanCou}. Here $\EsymN$ is the set of extremal symmetric probability measures on $X^N$ and $A$ is the matrix in $\mathbb{R}^{\ell \times |\EsymN|}$, whose columns are given by the elements of $\PQN$, i.e., 
\begin{equation}
\label{eq:DefA}
A:= \begin{pmatrix}
\lambda_1^{(1)}     & \lambda_1^{(2)}      &  & \dots & & \lambda_1^{(|\EsymN|)}\\
\vdots                      & \vdots                     &   &         & & \vdots\\
\lambda_{\ell}^{(1)} & \lambda_{\ell}^{(2)} &  &  \dots & & \lambda_{\ell}^{(|\EsymN|)}
\end{pmatrix}.
\end{equation}
The corresponding inverse map is also linear. 
\end{lem}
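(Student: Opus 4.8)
The plan is to exhibit the claimed bijection as a restriction of the map $R$, whose bijectivity onto $\Psym$ is already known because $\Psym$ is a simplex. Concretely I would check that (i) the polytope $\Pcoef$ is contained in the domain $\mathcal{P}\left(\PQN\right)$ of $R$; (ii) under $R$ the defining constraint $A\alpha=\overline{\lambda}$ of $\Pcoef$ corresponds exactly to the marginal constraint $\gamma\mapsto\overline{\lambda}$ defining $\Psymlambda$; and (iii) the linear-algebraic structure of $\EsymN$ makes $R$ and its inverse linear on the relevant sets.

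For (i): by \eqref{eq:DefA} each column of $A$ is an element of $\PQN$, hence a probability vector whose entries sum to $1$. Summing the $\ell$ scalar equations contained in $A\alpha=\overline{\lambda}$ therefore gives $\sum_j\alpha_j=\sum_{i=1}^{\ell}\overline{\lambda}_i=1$, so every $\alpha\in\Pcoef$ is automatically a probability measure on $\PQN$; in particular $R\alpha$ is a well-defined element of $\Psym$. For (ii): since $M_1$ is linear and $\psi_N$ is the inverse of $M_1$ on the extreme points, $M_1\psi_N(\lambda)=\lambda$ for all $\lambda\in\PQN$, so
\[
M_1(R\alpha)=\sum_{\lambda\in\PQN}\alpha_\lambda\,M_1\psi_N(\lambda)=\sum_{\lambda\in\PQN}\alpha_\lambda\,\lambda=A\alpha ,
\]
the last step being precisely that the columns of $A$ enumerate $\PQN$. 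For $\alpha\in\mathcal{P}\left(\PQN\right)$, symmetry of $R\alpha$ makes all one-point marginals coincide, so $R\alpha\mapsto\overline{\lambda}$ is equivalent to $M_1(R\alpha)=\overline{\lambda}$, i.e. to $A\alpha=\overline{\lambda}$. Together with (i) this yields $R(\Pcoef)\subseteq\Psymlambda$. Conversely, given $\gamma\in\Psymlambda$, the simplex property of $\Psym$ provides a unique $\alpha$ with $R\alpha=\gamma$, and the displayed identity gives $A\alpha=M_1\gamma=\overline{\lambda}$, so $\alpha\in\Pcoef$; injectivity of $R$ on $\Pcoef$ is inherited from injectivity on $\mathcal{P}\left(\PQN\right)$. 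Hence $R$ restricts to a bijection $\Pcoef\to\Psymlambda$.

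For (iii): $R$ is the restriction of the linear map $\mathbb{R}^{|\EsymN|}\to\mathbb{R}^{\ell^N}$ given by $\alpha\mapsto\sum_j\alpha_j\,\psi_N\big(\lambda^{(j)}\big)$, where $\psi_N\big(\lambda^{(j)}\big)=S\delta_{i_1\ldots i_N}$. These image vectors have pairwise disjoint supports — the $S_N$-orbits of the tuples $(a_{i_1},\ldots,a_{i_N})$ with $i_1\leq\dots\leq i_N$ partition $X^N$ — hence are linearly independent, so this linear map is an isomorphism onto its span, which contains $\Psymlambda$, and its inverse is linear. Restricting this isomorphism and its inverse to $\Pcoef$ and $\Psymlambda$ gives the full statement, including linearity of the inverse map.

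I expect no serious obstacle here; the argument is essentially bookkeeping. The only subtleties worth flagging are that the equality $A\alpha=\overline{\lambda}$ already encodes the normalization $\sum_j\alpha_j=1$, so no separate probability-vector hypothesis is needed in the definition of $\Pcoef$, and that the matching of the two constraint sets rests entirely on the identity $M_1\circ R=A$ established in (ii).
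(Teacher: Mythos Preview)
Your proposal is correct and follows essentially the same approach as the paper: inherit linearity and injectivity from $R:\mathcal{P}\left(\PQN\right)\to\Psym$, obtain surjectivity by applying $M_1$ to the simplex representation of an arbitrary $\gamma\in\Psymlambda$, and deduce linearity of the inverse from the linear independence of the measures $S\delta_{i_1\ldots i_N}$. Your write-up is in fact more explicit than the paper's in two places the paper leaves implicit: the verification that $\Pcoef\subseteq\mathcal{P}\left(\PQN\right)$ (your step (i)) and the clean identity $M_1\circ R=A$ (your step (ii)), which is exactly what makes the constraint correspondence work.
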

\begin{proof}
Linearity and injectivity of $R$ as a map from $\Pcoef$ to $\Psymlambda$ is an immediate consequence of the linearity and injective of $R:\mathcal{P}\left( \PQN \right) \to \Psym$ as introduced above. We further know that any $\gamma \in \Psymlambda$ is an element of $\Psym$. Hence, applying the parametrization of extremal symmetric probability measures on $X^N$ via their one-point marginals,  there exist coefficients $\alpha \in \mathcal{P}\left( \PQN \right)$, which are non-negative and whose entries sum to $1$ such that
\begin{equation}
\label{eq:ProofRepGam}
\gamma = \sum_{\lambda\in \PQN} \alpha_\lambda \psi_N(\lambda)
\end{equation}
and therefore $\gamma = R\alpha$ holds. Applying the linear marginal map $M_1$ to \eqref{eq:ProofRepGam} yields the following. 
\begin{equation*}
\overline{\lambda}= \sum_{\lambda\in \PQN} \alpha_\lambda \lambda
\end{equation*}
Therefore $\alpha$ corresponds to an element of $\Pcoef$. This implies surjectivity of the considered map $R$.
Linearity of the corresponding inverse map is an immediate consequence of the fact that the extremal symmetric probability measures on $X^N$ of the form \eqref{eq:ExtrSym} interpreted as vectors are linearly independent. 
\end{proof}
\noindent Now it is easy to see that the extreme points of $\Pcoef$ correspond exactly to the extremal symmetric Kantorovich couplings, in the sense that $R$ maps the corresponding sets of extreme points bijectively to each other. By standard arguments of polyhedral optimization the extreme points of $\Pcoef$ have a sparse structure, i.e., any extreme point of $\Pcoef$ can have at most $\ell$, that is the number of states in the finite state space $X$, non-zero entries (see, e.g., \cite{Be09}). In \cite{FV18} it was shown that this implies that any extremal Kantorovich coupling is a so called \textit{Quasi-Monge state}, i.e., of the form $\sum_{\nu=1}^{\ell} \alpha_\nu S\delta_{T_1\left(a_{\nu} \right)} \otimes \dots \otimes \delta_{T_N \left( a_{\nu} \right)}$ for $N$ maps $T_1,\ldots,T_N:X\to X$ such that $\frac{1}{N}\sum_{k=1}^N T_k\#\alpha =\overline{\lambda}$. Here we renounce from using the shorthand notations \eqref{eq:ShortDelta} in order to make it easier to draw a comparison with Monge's approach \eqref{eq:MongeOne}-\eqref{eq:MongeThree}. The ansatz space of Quasi-Monge states increases the number of unknowns only by $2\cdot\ell$ compared to the class of symmetrized Monge states and as every extremal Kantorovich coupling is a Quasi-Monge state, this ansatz space always contains an optimal coupling, in contrast to Monge's approach. Note further that obviously every symmetrized Monge state is a Quasi-Monge state. For further reading on this sufficient low-dimensional enlargement of the class of symmetrized Monge states we refer the interested reader to \cite{FV18}. There also a characterization of Monge states in the given setting was established. A probability measure on the product space $X^N$  is a symmetrized Monge state if and only if it is a Quasi-Monge state all of whose site weights $\alpha_1,\ldots,\alpha_\ell$ are equal to $\frac{1}{\ell}$. In summary, we get the following corollary. 
\begin{coro}
\label{cor:NewFor}
Extremal symmetric Kantorovich couplings correspond exactly, via the coefficients-to-coupling map $R$, to the extreme points of $\Pcoef$. Any of these extreme points of $\Pcoef$ is the coefficient vector of a coupling in Monge-form if and only if it is an integer vector scaled by the factor $\frac{1}{\ell}$.
\end{coro}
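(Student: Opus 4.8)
The plan is to assemble the ingredients developed in this section, the dictionary of Lemma~\ref{lem:IsoRel} doing most of the work. For the first assertion I would simply observe that, by Lemma~\ref{lem:IsoRel}, $R$ restricts to a \emph{linear} bijection of $\Pcoef$ onto $\Psymlambda$ whose inverse is again linear, and that a linear bijection between convex sets carries extreme points onto extreme points; hence $R$ maps $\ext\left(\Pcoef\right)$ bijectively onto $\ext\left(\Psymlambda\right)$, the latter being by definition the set of extremal symmetric Kantorovich couplings.

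For the second assertion, fix an extreme point $\alpha^*$ of $\Pcoef$ and put $\gamma:=R\alpha^*$. I would first record the two elementary facts $\alpha^*\geq0$ and $\sum_\lambda\alpha^*_\lambda=1$, the latter obtained by summing the $\ell$ scalar equations in $A\alpha^*=\overline{\lambda}$ and using that every column of $A$ is a probability vector. Next, for the direction from Monge-form couplings to scaled integer vectors: if $\gamma=\sum_{i=1}^\ell\tfrac1\ell\,S\delta_{\tau_1(i)\ldots\tau_N(i)}$ with permutations $\tau_1,\ldots,\tau_N$, then passing to one-point marginals rewrites this as $\gamma=\sum_{i=1}^\ell\tfrac1\ell\,\psi_N\big(\tfrac1N\sum_{k=1}^N\delta_{\tau_k(i)}\big)$, and collecting equal terms together with the uniqueness of the representation \eqref{eq:ConComRep} (equivalently, injectivity of $R$) shows that $\alpha^*=R^{-1}\gamma$ has, at each $\lambda\in\PQN$, the entry $\tfrac1\ell$ times the number of indices $i$ whose one-point marginal equals $\lambda$; so $\alpha^*$ is an integer vector scaled by $\tfrac1\ell$.

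For the converse, assume $\alpha^*=\tfrac1\ell z$ with $z$ an integer vector, whence automatically $z\geq0$ and $\sum_\lambda z_\lambda=\ell$. Listing the elements of $\PQN$ with $\lambda$ repeated $z_\lambda$ times gives $\mu^{(1)},\ldots,\mu^{(\ell)}$ with $\gamma=R\alpha^*=\tfrac1\ell\sum_{i=1}^\ell\psi_N(\mu^{(i)})$. Writing $\psi_N(\mu^{(i)})=S\delta_{j^{(i)}_1\ldots j^{(i)}_N}$ for the sorted index tuple $(j^{(i)}_1,\ldots,j^{(i)}_N)$ with empirical measure $\mu^{(i)}$, and setting $T_k(a_i):=a_{j^{(i)}_k}$, exhibits $\gamma$ as a Quasi-Monge state all of whose site weights equal $\tfrac1\ell$, because the required averaged push-forward identity $\tfrac1N\sum_{k=1}^N T_k\#\overline{\lambda}=\overline{\lambda}$ is merely a rewriting of $A\alpha^*=\overline{\lambda}$. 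By the characterization of Monge states recalled above from \cite{FV18}, $\gamma$ is then a symmetrized Monge state, i.e.\ $\alpha^*$ is the coefficient vector of a coupling in Monge form, which closes the equivalence.

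I expect the only step that is not linear-algebraic bookkeeping to be the invoked equivalence ``Quasi-Monge state with uniform site weights $\Longleftrightarrow$ symmetrized Monge state'' from \cite{FV18}, which I would cite rather than reprove. Its proof rests on a Birkhoff-von Neumann / K\"onig edge-colouring argument that rearranges the array $(j^{(i)}_k)_{i,k}$ into one whose columns are permutations of $\{1,\ldots,\ell\}$, the feasibility of this rearrangement being exactly the fact that $A\alpha^*=\overline{\lambda}$ forces each state index to occur precisely $N$ times across the array. Should a self-contained argument be wanted, carrying out that decomposition would be the main---and essentially the only---obstacle; the rest is the dictionary furnished by Lemma~\ref{lem:IsoRel}.
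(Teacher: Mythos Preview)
Your proposal is correct and follows essentially the same approach as the paper. The paper does not give a formal proof of the corollary but treats it as an immediate consequence of the preceding discussion: Lemma~\ref{lem:IsoRel} for the extreme-point correspondence, and the characterization from \cite{FV18} of symmetrized Monge states as precisely the Quasi-Monge states with uniform site weights $\tfrac{1}{\ell}$. You have spelled out the bookkeeping---in particular the passage between the coefficient vector in $\Pcoef$ and the site weights of a Quasi-Monge representation, and the verification that $A\alpha^*=\overline{\lambda}$ encodes the averaged push-forward condition---more explicitly than the paper does, but the underlying logic is the same.
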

\noindent This corollary gives us a numerically-convenient way to compute the set of extremal Kantorovich couplings and check whether they are of Monge-form or not. In addition we also want to consider Monge's approach by itself. For this purpose we introduce the sets
\begin{equation}
\label{eq:DefMongeSet}
\PsymMonge:=\left\{ \gamma \in \Psymlambda : \gamma \textrm{ is of Monge-form } \eqref{eq:MongeOne}-\eqref{eq:MongeThree} \right\}
\end{equation}
and 
\begin{equation}
\label{eq:DefMongeSetConv}
\PsymMongeConv:= \conv\left(\PsymMonge\right).
\end{equation}
$\PsymMonge$ is the set of all symmetrized Monge states. In the following we will refer to $\PsymMongeConv$ as the \textit{(symmetric) Monge polytope for $N$ marginals and $\ell$ states}. For simplicity we will once again drop the term symmetric from time to time. Note that if there exists an optimizer of problem \eqref{eq:ProblemN} which is an element of $\PsymMongeConv$ then there exists a Monge-type minimizer. 
\newline
\newline
Having the explanations leading up to Corollary \ref{cor:NewFor} in mind, it is easy to see that $\PsymMonge$ corresponds to the (scaled by $\frac{1}{\ell}$) integer elements of $\Pcoef$. These can be for example determined by a simple enumeration of all the ordered choices of $N-1$ permutations interpreted as coefficient vectors in $\Pcoef$. Checking which of these scaled integer coefficient vectors are extremal with respect to the convex hull of them as a whole, gives us the extremal elements of $\PsymMongeConv$.
\newline
\newline
The data in Figure \ref{fig:SecTwoExt} was computed using MATLAB \cite{MATLAB2018b} and polymake \cite{POLYMAKE32}.
\begin{figure}[htbp]
\centering
  \includegraphics[height=180mm]{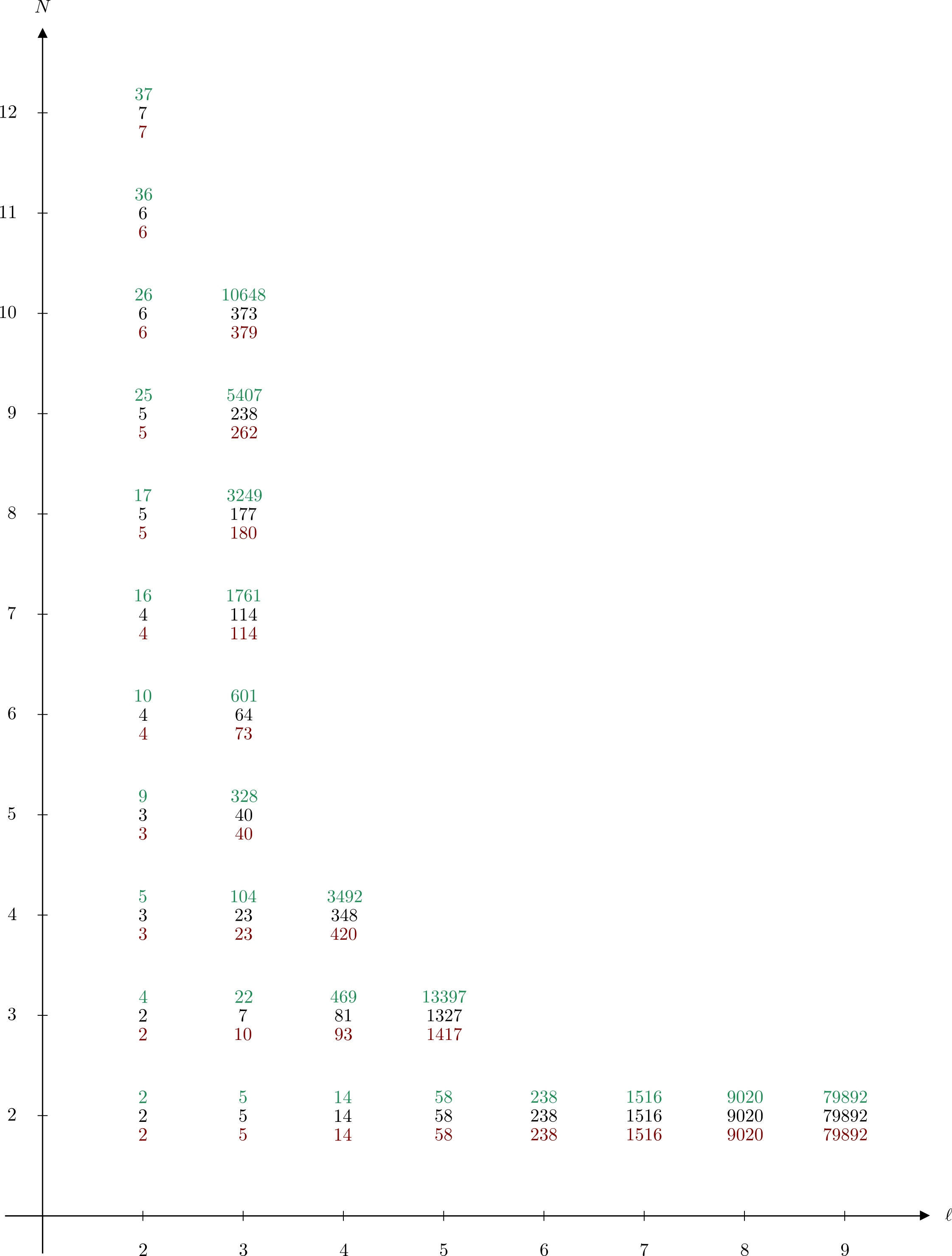}
\caption{The number of extreme points of the symmetric Kantorovich polytope $\Psymlambda$ respectively of the symmetric Monge polytope $\PsymMongeConv$ is given in green respectively red. The number of extreme points of $\Psymlambda$ that are of Monge-type (see \eqref{eq:MongeOne}-\eqref{eq:MongeThree}) is depicted in black. Here, as usual, $N$ denotes the number of marginals and $\ell$ the number of states.}
\label{fig:SecTwoExt}
\end{figure}
\newline
\newline
It was already mentioned above, that the extreme points of the polytope $\Pcoef$ have a sparse structure. In more detail, a coefficient vector $\alpha \in \Pcoef$ is extremal with respect to the polytope $\Pcoef$ if and only if its nonzero entries correspond to a selection of columns of $A$ which are linearly independent (see, e.g., \cite{Be09}). That is why, the complexity of computing the extreme points of $\Pcoef$, and their number,  increases faster with the number of states than with the number of marginals. Suppose you are looking at a setting where the number of marginals is equal to the number of states. Then, on the one hand, increasing the number of marginals by $1$ yields $\tbinom{2N}{N+1}- \tbinom{2N-1}{N}$ more columns in $A$. On the other hand, an increase in the number of states by $1$ enlarges the number of columns of $A$ by $\tbinom{2N}{N}-\tbinom{2N-1}{N}$. Elementary computations show that in the second case $A$ has $\frac{1}{N+1}\tbinom{2N}{N}$ more columns than in the first case. Moreover, in contrast to an increase in the number of marginals, an increase in the number of states also increases the number of rows of $A$ by 1. Therefore then up to $\ell+1$ columns of $A$ can be linearly independent. Hence, an increase in the number of states leads to a faster increasing (compared to an increase in the number of marginals) number of subsets of linearly independent columns of the constraint matrix $A$ by yielding a steeper increase in the number of columns of $A$ as well as by enlarging the dimension of the column space. Each of these subsets corresponds to an extreme point of $\Pcoef$. 
\begin{rem}
\label{rem:IntObsOne}
This remark lists interpretations and observations regarding Figure \ref{fig:SecTwoExt}.
\begin{enumerate}[1)]
\item In the case $N=2$, Figure \ref{fig:SecTwoExt} shows that in the considered cases every extremal symmetric Kantorovich coupling is of Monge-type. In the given setting this means that every extreme point of $\Psymlambda$ is a symmetrized permutation matrix. It is easy to see, using the celebrated Birkhoff-von Neumann theorem \cite{Bi46, vN53} as well as the linearity of the symmetrization operator $S$ \eqref{eq:SymOp}, that this holds true for an arbitrary number $\ell$ of states. Note, however, that not every symmetrized permutation matrix is an extreme point of $\Psymlambda$, but only those symmetrized Monge-states whose corresponding coefficient vectors select linearly independent columns of $A$.  
\item In the case $\ell=2$, the number of extremal symmetric Kantorovich couplings which are of Monge-type increases by 1 each time the marginal number is even. It is easy to prove that this pattern will continue. Firstly, note that, in the case $\ell=2$, every symmetric Kantorovich coupling of Monge-type is an extreme point of $\Psymlambda$. This follows by a support-argument regarding the corresponding coefficient vectors. Secondly, we take a look at the symmetrized Monge-states in this setting. We assume the marginal vectors $\lambda^{(1)},\ldots,\lambda^{(N+1)}$ are sorted in the columns of $A$ by the first component in decreasing order, i.e.,
\begin{equation*}
A=\begin{pmatrix}
1 & \frac{N-1}{N} & \dots & \frac{1}{N} & 0 \vspace{0.2cm}\\ 
0 & \frac{1}{N} & \dots & \frac{N-1}{N} & 1
\end{pmatrix}.
\end{equation*}
Then the symmetric Kantorovich couplings of Monge-type are exactly those couplings with coefficient vectors
\begin{equation*}
\alpha^{(j)}=\frac{1}{2}e^{(j)}+\frac{1}{2}e^{(N+1-j+1)}
\end{equation*}
for $j=1,2,\ldots, \left\lceil \frac{N+1}{2} \right\rceil$, where $e^{(i)}$ is the $i$-th unit vector.
\item
The setting of $3$ marginals and $3$ sites, i.e., $N=\ell=3$ is the main focus in \cite{GF18}. There interested readers can find the $22$ extreme points of the symmetric Kantorovich polytope explicitly listed including the information which extremal elements are of Monge-type and which are not. This list also shows which pairs of permutations (identifiying $T_1$ with the identity) correspond to an extremal symmetric Kantorovich coupling. \cite{GF18} also visualizes these $22$ extremal states as molecular packings, where one can identify irreducible packings with extreme points.  
\item \label{item:Blacktogreen} 
Note that for each grid-point in Figure \ref{fig:SecTwoExt} dividing the number depicted in black by the number depicted in green, i.e., '$\frac{\textrm{black}}{\textrm{green}}$', gives the ratio of extreme points of the symmetric Kantorovich polytope that are of Monge-type. For a fixed three element state space, i.e., $\ell=3$, this ratio consistently decreases with growing $N$ from $1$ for $2$ marginals to $0.035$ for $10$ marginals. Reversing the roles of $N$ and $\ell$, i.e., fixing the number of marginals $N$ to three and letting the number of marginal states grow, also yields a consistently decreasing behavior of the considered ratio; starting from $0.5$ for $\ell=2$ and ending at $0.099$ for $\ell = 5$. \newline
The considered ratio has the following interesting probabilistic interpretation. Given a non-degenerate cost function, i.e., a cost function that yields a unique optimizer of problem \eqref{eq:ProblemN}, the probability of the corresponding optimizer being of Monge-type is given by the considered ratio. Here we obviously draw uniformly from the set of extremal symmetric Kantorovich couplings. Specific cost functions might always yield Monge-type optimizers, see Section \ref{sec:VisComp}. 
\item For each grid-point in Figure \ref{fig:SecTwoExt} the ratio '$\frac{\textrm{black}}{\textrm{red}}$' is an indicator of how much unnecessary information is contained in Monge's ansatz. For the considered cases this ratio is always above $0.8$ except for one outlier at $N=\ell=3$ where the ratio is given by $0.7$. Hence, even though the Monge ansatz does not contain the 'entire information of the Kantorovich polytope', see \ref{item:Blacktogreen}), at least it does not entail 'a lot' of unnecessary information.
\item Finally, we want to give computationally determined examples of non-Monge extreme points in the case of $N=3$ marginals. 
\begin{equation*}
\ell = 3: \ \frac{1}{2}\psi_N \begin{pmatrix}
2/3\\
1/3\\
0
\end{pmatrix} + \frac{1}{2}\psi_N \begin{pmatrix}
0\\
1/3\\
2/3
\end{pmatrix} \hspace{5mm} \ell = 4 : \ \frac{3}{8}\psi_N \begin{pmatrix}
2/3\\
1/3\\
0\\
0
\end{pmatrix} + \frac{3}{8}\psi_N \begin{pmatrix}
0\\
1/3\\
2/3\\
0
\end{pmatrix} + \frac{1}{4}\psi_N \begin{pmatrix}
0\\
0\\
0\\
1
\end{pmatrix}
\end{equation*}
\begin{equation*}
\ell = 5 : \ \frac{3}{10}\psi_N \begin{pmatrix}
2/3\\
1/3\\
0\\
0\\
0
\end{pmatrix} + \frac{3}{10}\psi_N \begin{pmatrix}
0\\
1/3\\
2/3\\
0\\
0
\end{pmatrix} + \frac{1}{5}\psi_N \begin{pmatrix}
0\\
0\\
0\\
2/3\\
1/3\\
\end{pmatrix} + \frac{1}{5}\psi_N \begin{pmatrix}
0\\
0\\
0\\
1/3\\
2/3\\
\end{pmatrix}
\end{equation*}
The $(\ell=3)$-example was already given in \cite{GF18}. The two remaining extreme points both consist of two components. One that is compatible with Monge's approach; it is given by the last $(\ell = 4)$ respectively the two last $(\ell=5)$ terms of the corresponding sum. The remaining component arises from the $(\ell=3)$-example and yields the non-Monge property of the considered extreme points. These considerations indicate how to construct non-Monge extreme points for growing $\ell$: Assume the number of states $\ell$ to be no less than 6. Firstly, choose an increasing triple $\{i_1,i_2,i_3\}$ of pairwise distinct indices from the set $\{1,2,\ldots,\ell\}$. These indices mark the elements of the finite state space $X$ that will be 'covered' by the $(\ell=3)$-example of a non-Monge extreme point from above. In order to simplify notation we assume these 'covered' states to be $a_1,a_2$ and $a_3$, i.e., $\{i_1,i_2,i_3\} = \{1,2,3\}$. Moreover, let $\lambda^{(1)},\ldots,\lambda^{(\ell-3)} \in \mathcal{P}_{\frac{1}{3}}(\{a_4,\ldots,a_\ell\})$ be $\frac{1}{3}$-quantized probability measures on $X\backslash \{a_1,a_2,a_3\}$ which form an extreme point of the symmetric Kantorovich polytope for 3 marginals and $\ell-3$ states in Monge-form. Then 
\begin{equation*}
\frac{3}{2 \cdot \ell}\psi_N \begin{pmatrix}
2/3\\
1/3\\
0\\
0\\
\vdots\\
0
\end{pmatrix} + \frac{3}{2 \cdot \ell}\psi_N \begin{pmatrix}
0\\
1/3\\
2/3\\
0\\
\vdots\\
0
\end{pmatrix} + \sum_{i=1}^{\ell-3}\frac{1}{\ell}\psi_N \begin{pmatrix}
0\\
0\\
0\\
\vline\\
\lambda^{(i)}\\
\vline
\end{pmatrix}
\end{equation*}
is a non-Monge extreme point for 3 marginals and $\ell$ states. The described construction reveals that from the single non-Monge extreme point for $\ell=3$ states arise 
\begin{equation*}
{\ell \choose 3}\cdot \mathrm{Monge}_{\ell-3}
\end{equation*}
non-Monge extreme points for $\ell$ states. Hereby $\mathrm{Monge}_{\ell-3}$ is the number of extreme points of the symmetric Kantorovich polytope for 3 marginals and $\ell-3$ states in Monge-form.
\end{enumerate}
\end{rem}
\begin{figure}[htbp]
\centering
\begin{minipage}[t][85mm][t]{0.48\textwidth}
  \centering
  \includegraphics[width=1.0\linewidth]{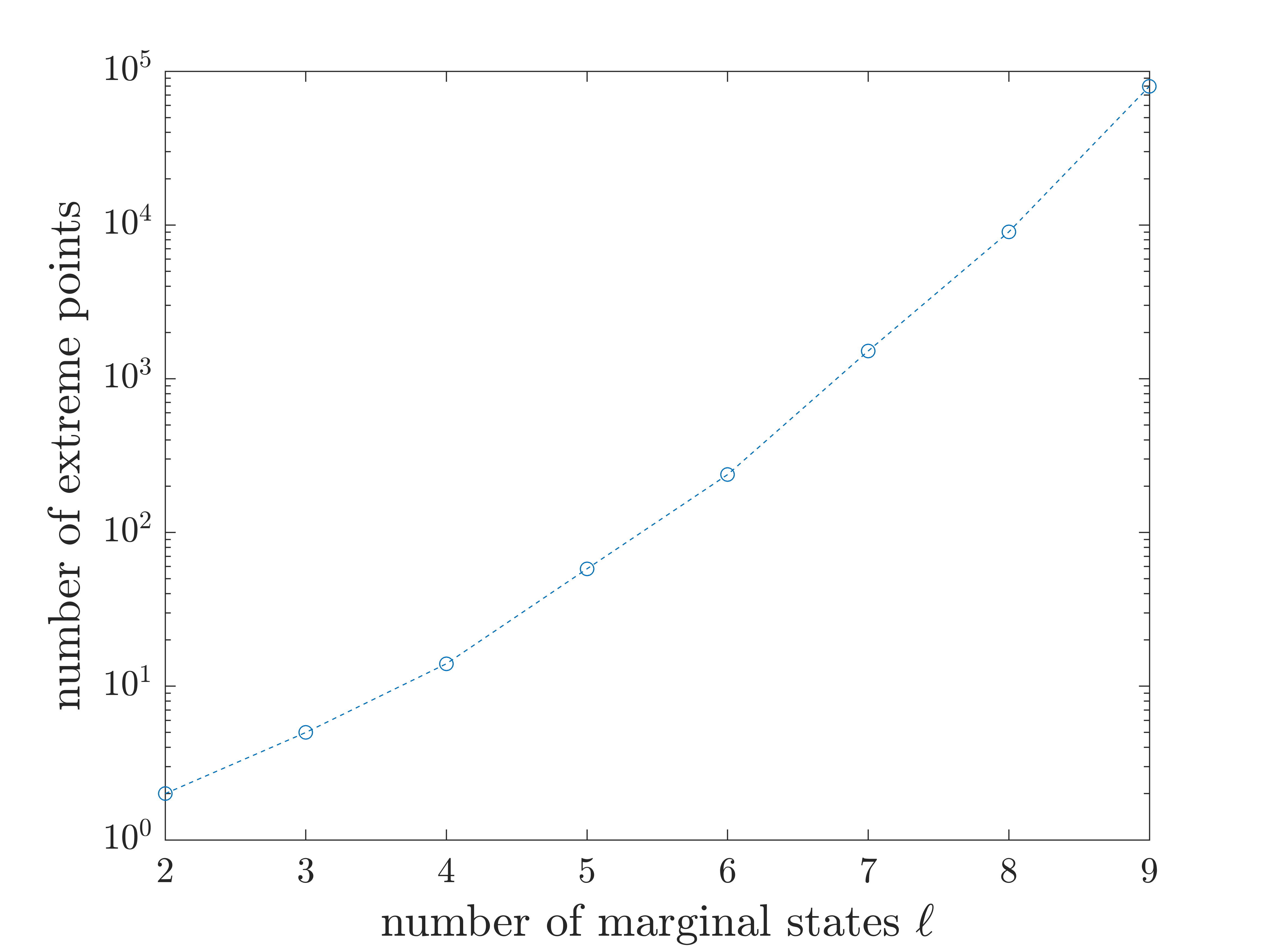}
 \captionof{figure}{The number of extremal symmetric Kantorovich couplings for $N=2$ marginals is depicted in dependency of the number of marginal states $\ell$.}
  \label{fig:Remark1}
\end{minipage}%
\hfill
\begin{minipage}[t][85mm][t]{.48\textwidth}
  \centering
  \includegraphics[width=1.0\linewidth]{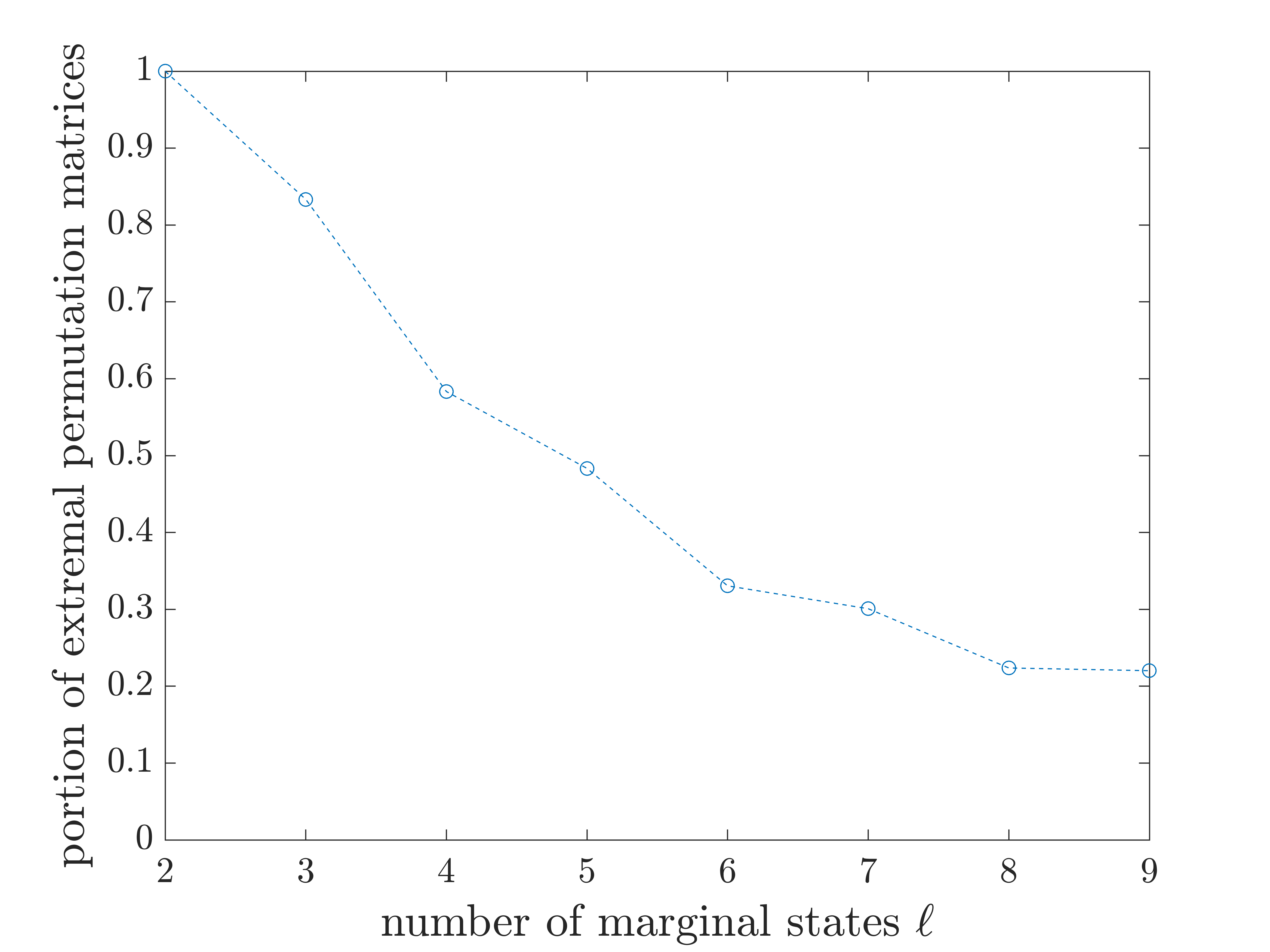}
  \captionof{figure}{The ratio of the number of extremal symmetrized permutation matrices to the number of permutation matrices is depicted in dependency of the number of marginal states $\ell$.}
 \label{fig:Remark2}
\end{minipage}
\end{figure} 
\noindent The computational restriction regarding our extreme point investigation is visualized in Figure \ref{fig:Remark1} and \ref{fig:Remark2}: Figure \ref{fig:Remark1} depicts a super-exponential growth of the number of extremal symmetric Kantorovich couplings in the case of $N=2$ marginals. Moreover, Figure \ref{fig:Remark2} indicates that the portion of permutation matrices which correspond to extremal symmetric Kantorovich couplings (in the case of $N=2$ marginals) tends to a value close to $0.2$.
\section{Classification of the Extreme Points of a Reduced Kantorovich Polytope}
\label{sec:IntroRed}
In Section \ref{sec:ClassKan}, we achieved a better understanding of the OT problem \eqref{eq:ProblemN} by numerically analyzing the convex geometry of the set of admissible trial states, i.e., the set of symmetric Kantorovich couplings $\Psymlambda$. Motivated by physical applications we assume from this point on that the given cost function has pairwise symmetric structure. Then the set of admissible trial states can be reduced by the linear map $M_2$ to a lower-dimensional polytope thereby decreasing the number of extremal states. 
\newline
\newline
In more detail, we consider the OT problem \eqref{eq:ProblemN} with $c:X^N\to \mathbb{R}$ being a cost function with pairwise symmetric structure, i.e.,
\begin{equation}
\label{eq:DefPairSymCost}
c(x_1,\ldots,x_N)=\sum_{1\leq i < j \leq N} v(x_i,x_j)  \textrm{ for all } (x_1,\ldots, x_N) \in X^N, 
\end{equation}
where $v:X^2 \to \mathbb{R}$ is a symmetric pair-potential, i.e., $v(x,y)=v(y,x)$ for all $(x,y) \in X^2$. Then the objective function of \eqref{eq:ProblemN} can be rewritten as 
\begin{equation}
\label{eq:IntRef}
\int_{X^N} c(x_1,\ldots,x_N) d\gamma(x_1,\ldots,x_N)= \binom{N}{2} \int_{X^2} v(x,y) d\left(M_2\gamma\right)(x,y),
\end{equation}
where $\gamma \in \Psymlambda$ is an arbitrary symmetric Kantorovich coupling. This elementary reformulation was established in \cite{FMPCK13}. There also the concept of $N$-representability (see Definition \ref{def:NRep}) was introduced which we will use in the following to write the reduced set of admissible trial states in a compact manner. 
\begin{defi}[$N$-representability]
\label{def:NRep}
A probability measure $\mu \in \mathcal{P}\left(X^k\right)$ is called $N$-representable if there exists a symmetric probability measure $\gamma$ on the product space $X^N$, i.e., $\gamma \in \Psym$, such that $\mu$ is its $k$-point marginal, i.e., 
\begin{equation}
\label{eq:DefNRep}
\mu = M_k\gamma.
\end{equation}
Any such symmetric probability measure on $X^N$ that fulfills \eqref{eq:DefNRep} is then called a representing measure of $\mu$. In the following the set of $N$-representable $k$-plans will be denoted by $\PNrepk$.
\end{defi}
\noindent As we consider pairwise interactions, we will focus our attention on the set of $N$-representable $2$-point measures, i.e., $\PNreptwo$. Note, however, that cost functions $c$ embodying $k$-particle interactions would give rise to a problem reformulation reducing the set of admissible trial states to a subset of $\PNrepk$. In the case $k=N$, $c$ would be a symmetric cost which are, as mentioned in the introduction, 'dual' to the set of symmetric probability measures on the product space $X^N$. In the same manner, cost functions with symmetric pairwise structure have a dual relationship with the set of $N$-representable $2$-plans.
\newline
\newline
By definition, the set of $N$-representable $2$-point measures is the image of the set of symmetric probability measures on $X^N$ under the map $M_2$, defined in \eqref{eq:kMarMap}, i.e., $M_2\left( \Psym \right) =\PNreptwo$. Combining this equality with \eqref{eq:IntRef} yields that \eqref{eq:ProblemTwo} is an equivalent reformulation of the multi-marginal OT problem \eqref{eq:ProblemN} for a cost function with pairwise symmetric structure \eqref{eq:DefPairSymCost}. Here \eqref{eq:ProblemTwo} can also be written as
\begin{equation*}
\min_{\mu \in \PNreplambda} \int_{X^2} v(x,y)  d\mu(x,y),
\end{equation*}
where $\PNreplambda$ is the set of $N$-representable $2$-plans having uniform marginal, i.e., 
\begin{equation}
\label{eq:DefPNreplambda}
\PNreplambda:=\left\{ \mu \in \PNreptwo: M_1(\mu) = \overline{\lambda} \right\}.
\end{equation}
We will refer to the set $\PNreplambda$ as \textit{reduced Kantorovich polytope for $N$ marginals and $\ell$ states}. The convex geometry of this set will be numerically analyzed in the following. Thereby the validity of Monge's approach in the given setting will be tested. 
\newline
\newline
\noindent We have seen above that under the assumption of pairwise symmetric cost functions the OT problem \eqref{eq:ProblemN}, where the set of admissible trial states is given by the high-dimensional set $\Psymlambda$, can be equivalently formulated as a minimization problem over the lower-dimensional set $\PNreplambda$ (see \eqref{eq:ProblemTwo}). The pairwise symmetric structure implies that any symmetric Kantorovich coupling influences the value of the objective function of problem \eqref{eq:ProblemN} only through their respective two-point marginal (see \eqref{eq:IntRef}). The nature of this reformulation, applying the two-point marginal map $M_2$ on the set of symmetric Kantorovich couplings, however, entails that the new set of admissible trial states, i.e., the reduced Kantorovich polytope is only implicitly known. Only in the two-marginal (N=2) case the reduced Kantorovich polytope can be understood in a straightforward manner: It corresponds to the set of symmetric bistochastic matrices scaled by the factor $\frac{1}{\ell}$ (see Remark \ref{rem:IntObsTwo} \ref{rem:IntObsTwoOne}) below for further consideration of the two-marginal case). Hence, in the case $N=2$, $\PNreplambda=\Psymlambda$ holds. For a better understanding of the multi-marginal ($N>2$) case, we will in the following, as motivated, view the reduced Kantorovich polytope as the image of the set of symmetric Kantorovich couplings on $X^N$ under the two-point marginal map, i.e.,
\begin{equation}
\label{eq:ImTwoMar}
M_2(\Psymlambda)=\PNreplambda.
\end{equation} 
\newline
\noindent As described in Section \ref{sec:ClassKan}, $\Psymlambda$ corresponds to the convex hull of its extreme points. Combining this fact with \eqref{eq:ImTwoMar} and the linearity of $M_2$ yields that the reduced Kantorovich polytope $\PNreplambda$ is equal to the convex hull of the two-point marginals of extremal symmetric Kantorovich couplings, i.e., 
\begin{equation}
\label{eq:PNreplambdaConvHull}
\PNreplambda= \conv\left( \left\{ M_2\gamma: \gamma \textrm{ is an extreme point of } \Psymlambda \right\} \right).
\end{equation}
The following proposition is an immediate consequence. 
\begin{prop}
\label{prop:ExtRep}
Any extreme point of the reduced Kantorovich polytope for $N$ marginals and $\ell$ states is the two-point marginal of an extremal symmetric Kantorovich coupling. 
\end{prop}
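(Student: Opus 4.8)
The plan is to read the statement off directly from the convex-hull representation \eqref{eq:PNreplambdaConvHull}, together with the elementary fact that the extreme points of the convex hull of a finite set are among the points of that set. First I would record the finiteness input: $\Psymlambda$ is a bounded polyhedron of finite dimension in $\mathbb{R}^{\ell^N}$ (as noted in Section \ref{sec:ClassKan}), so it has only finitely many extreme points. Consequently the candidate set
\[
V := \left\{ M_2\gamma : \gamma \textrm{ is an extreme point of } \Psymlambda \right\}
\]
is finite, and by \eqref{eq:PNreplambdaConvHull} we have $\PNreplambda = \conv(V)$. (Recall that \eqref{eq:PNreplambdaConvHull} itself rests only on $\Psymlambda = \conv\left(\ext(\Psymlambda)\right)$, i.e.\ Minkowski's theorem, and the linearity of $M_2$.)

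Second, I would invoke the standard lemma that for any finite $V \subseteq \mathbb{R}^m$ one has $\ext\left(\conv(V)\right) \subseteq V$, with the short argument included for completeness: if $x \in \conv(V) \setminus V$, write $x = \sum_i \lambda_i v_i$ with $v_i \in V$ distinct, $\lambda_i > 0$, $\sum_i \lambda_i = 1$; since $x \notin V$, at least two of the $v_i$ occur, so $x$ is a nontrivial convex combination of points of $\conv(V)$ different from $x$, hence $x \notin \ext\left(\conv(V)\right)$. Contrapositively, every extreme point of $\conv(V) = \PNreplambda$ lies in $V$, i.e.\ equals $M_2\gamma$ for some extreme point $\gamma$ of $\Psymlambda$ — which is precisely the assertion of the proposition.

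There is essentially no hard step here; the only things to be careful about are that the convex-hull description \eqref{eq:PNreplambdaConvHull} is genuinely in place before applying the lemma, and that the two-point marginal map is applied to the \emph{extreme} points of $\Psymlambda$ rather than to all of $\Psymlambda$ (the latter would give a correct but uninformative hull). I would also add a remark that the inclusion can be strict: distinct extremal symmetric Kantorovich couplings may share the same two-point marginal, and a marginal $M_2\gamma$ of an extreme point $\gamma$ may fall into the relative interior of a face of $\PNreplambda$. Thus the proposition furnishes a finite \emph{over-approximation} of $\ext\left(\PNreplambda\right)$, which is exactly the candidate list one then prunes by a vertex-enumeration step, as done for Figure \ref{fig:KanMonge}.
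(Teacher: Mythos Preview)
Your proposal is correct and matches the paper's approach exactly: the paper states the proposition as ``an immediate consequence'' of \eqref{eq:PNreplambdaConvHull}, and you simply make that immediacy explicit by recording finiteness of $\ext(\Psymlambda)$ and the elementary fact $\ext(\conv(V))\subseteq V$. Your added remark about the inclusion being potentially strict is likewise consistent with (and anticipates) the discussion the paper gives immediately after the proposition.
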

\noindent Now the question is whether or not $M_2$ represents a bijective relationship between the sets of extreme points of $\Psymlambda$ and $\PNreplambda$. The following remark sheds light on this issue applying the bijective relationship between $\Psymlambda$ and the polytope $\Pcoef$ established in Lemma \ref{lem:IsoRel} and Corollary \ref{cor:NewFor}. 
\begin{rem}
\label{rem:ConPcoefPNrep}
In Section \ref{sec:ClassKan} the extreme points of $\Psymlambda$, i.e., the set of admissible trial states of problem \eqref{eq:ProblemN}, are determined using the set's bijective relationship, captured in the coefficients-to-coupling map $R$ introduced in Section \ref{sec:ClassKan}, to the polytope $\Pcoef$. As explained above in more detail, the map $R$ identifies any symmetric probability measure on $X^N$ $\gamma$ with a coefficient vector $\alpha$, such that $\gamma$ can be written as the respective convex combination of the extreme points of $\Psym$, i.e., \eqref{eq:ConComRep} holds. These coefficients are unique due to the disjoint support of the extremal symmetric probability measures on $X^N$.  It was proven in \cite{FV18} that the two-point marginal map $M_2$ is a bijection between the sets of extreme points of $\Psym$ and $\PNreptwo$, respectively. Due to the linearity of $M_2$, given a coefficient vector $\alpha$ and a symmetric probability measure $\gamma$ on $X^N$, such that $\gamma=R\alpha$, i.e., \eqref{eq:ConComRep} holds true, then
\begin{equation*}
M_2\gamma =\sum_{1\leq i_1\leq \dots \leq i_N \leq \ell} \alpha_{i_1,\ldots,i_N} M_2S\delta_{i_1,\ldots,i_N}.
\end{equation*}
Only now, these coefficients $\alpha$ representing $M_2\gamma$ as a convex combination of the extreme points of the set of $N$-representable two-point measures may not be unique, rendering us unable to identify the extreme points of the reduced Kantorovich polytope with those of the coefficient-polytope $\Pcoef$.
\end{rem}

\noindent The remark above illuminates why the extreme points of the set of symmetric Kantorovich couplings $\Psymlambda$ can not be identified with the extremal elements of the reduced Kantorovich polytope $\PNreplambda$ via $M_2$. The two-point marginal map may for example map multiple extreme points of the set $\Psymlambda$ on a single point of $\PNreplambda$; this point may lie on a face or in the interior of $\PNreplambda$ (see \cite{GF18} for an well-illustrated example). 
\newline
\newline
Nevertheless, it was established in Proposition \ref{prop:ExtRep} that every extremal element of $\PNreplambda$ has a representing measure that is itself extremal with respect to $\Psymlambda$. The extreme points of this set of symmetric Kantorovich couplings were in Corollary \ref{cor:NewFor} identified with the extreme points of $\Pcoef$. In combination with the in Remark \ref{rem:ConPcoefPNrep} established connection between $\Pcoef$ and $\PNreplambda$ this leads us to the following approach to determine the extremal elements of $\PNreplambda$: 
\begin{enumerate}
\item We start by determining the extremal elements of $\Pcoef$. This was already done within the considerations of Section \ref{sec:ClassKan}. 
\item Every such extreme point is multiplied by the matrix $T\in \mathbb{R}^{\ell^2\times |\EsymN|} $ which is constructed as follows. The matrix $A$ as defined in \eqref{eq:DefA} lists all the elements of $\PQN$ as columns. It was proven in \cite{FV18} that for any element $\lambda$ of $\PQN$ the following holds: 
\begin{equation}
\label{eq:ForTwoMar}
M_2\psi_N\left(\lambda\right) = \frac{N}{N-1}\lambda\otimes\lambda - \frac{1}{N-1}\left(\textrm{id,id}\right) \# \lambda, 
\end{equation}
where the map $\psi_N$ was introduced in Section \ref{sec:ClassKan}. Note that it was further established in \cite{FV18} that measures of form \eqref{eq:ForTwoMar} for $\lambda \in \PQN$ are exactly the extreme points of $\PNreptwo$. Now we construct $T$ by replacing any column $\lambda$ of $A$ with $M_2\psi_N\left(\lambda\right)$ as given in \eqref{eq:ForTwoMar} where we canonically identify matrices with vectors by gluing columns together. 
\item Finally we check which points of the form 
\begin{equation*}
T\alpha \textrm{ for } \alpha \in  \ext\left(\Pcoef\right)
\end{equation*} 
are extremal with respect to $\conv\left(\left\{ T\alpha: \alpha \in \ext\left(\Pcoef\right) \right\} \right)$ and therefore by \eqref{eq:PNreplambdaConvHull} with respect to $\PNreplambda$. 
\end{enumerate}
Note that it is computationally more complex to determine the extremal elements of $\PNreplambda$ than those of $\Psymlambda$.
\newline
\newline
Now, we will incorporate Monge's approach in the reduced setting. 
\begin{defi}
\label{def:RedMonge}
An element of the reduced Kantorovich polytope for $N$ marginals and $\ell$ states is said so be \textit{of Monge-type} or \textit{in Monge-form} if it has a representing measure that is of Monge-form (see \eqref{eq:MongeOne}-\eqref{eq:MongeThree}). 
\end{defi}
\noindent This definition is consistent with our goal to check the validity of Monge's approach as any optimizer in Monge-form for problem \eqref{eq:ProblemTwo}  guarantees the existence of an optimizer in Monge-form for problem \eqref{eq:ProblemN}. The set of all elements of $\PNreplambda$ which are in Monge-form will be denoted as $\PNrepMonge$, i.e.,
\begin{equation*}
\PNrepMonge:=\left\{M_2\gamma: \gamma \textrm{ is of Monge-type } \eqref{eq:MongeOne}-\eqref{eq:MongeThree} \right\}.
\end{equation*}
Analogously to \eqref{eq:DefMongeSetConv} we introduce the \textit{reduced Monge polytope for $N$ marginals and $\ell$ states} $\PNrepMongeConv$ as follows. 
\begin{equation}
\label{eq:DefNrepMongeSetConv}
\PNrepMongeConv:=\conv\left( \PNrepMonge \right)
\end{equation}
\newline
The extremal elements of the reduced Monge polytope can be determined in the same manner as those of the reduced Kantorovich polytope (see the description of the procedure above). Starting point are now the extremal elements of the Monge polytope $\PsymMonge$ interpreted as coefficient vectors. 
\newline
\newline
Checking which of the extreme points of the reduced Kantorovich polytope  $\PNreplambda$ correspond to an extremal element of the reduced Monge polytope $\PNrepMongeConv$ tells us which of the extreme points of $\PNreplambda$ are of Monge-type. 
\newline
\newline
The data in Figure \ref{fig:ExtRed} was computed using MATLAB \cite{MATLAB2018b} and polymake \cite{POLYMAKE32}. 
\begin{figure}[htbp]
\centering
  \includegraphics[height=180mm]{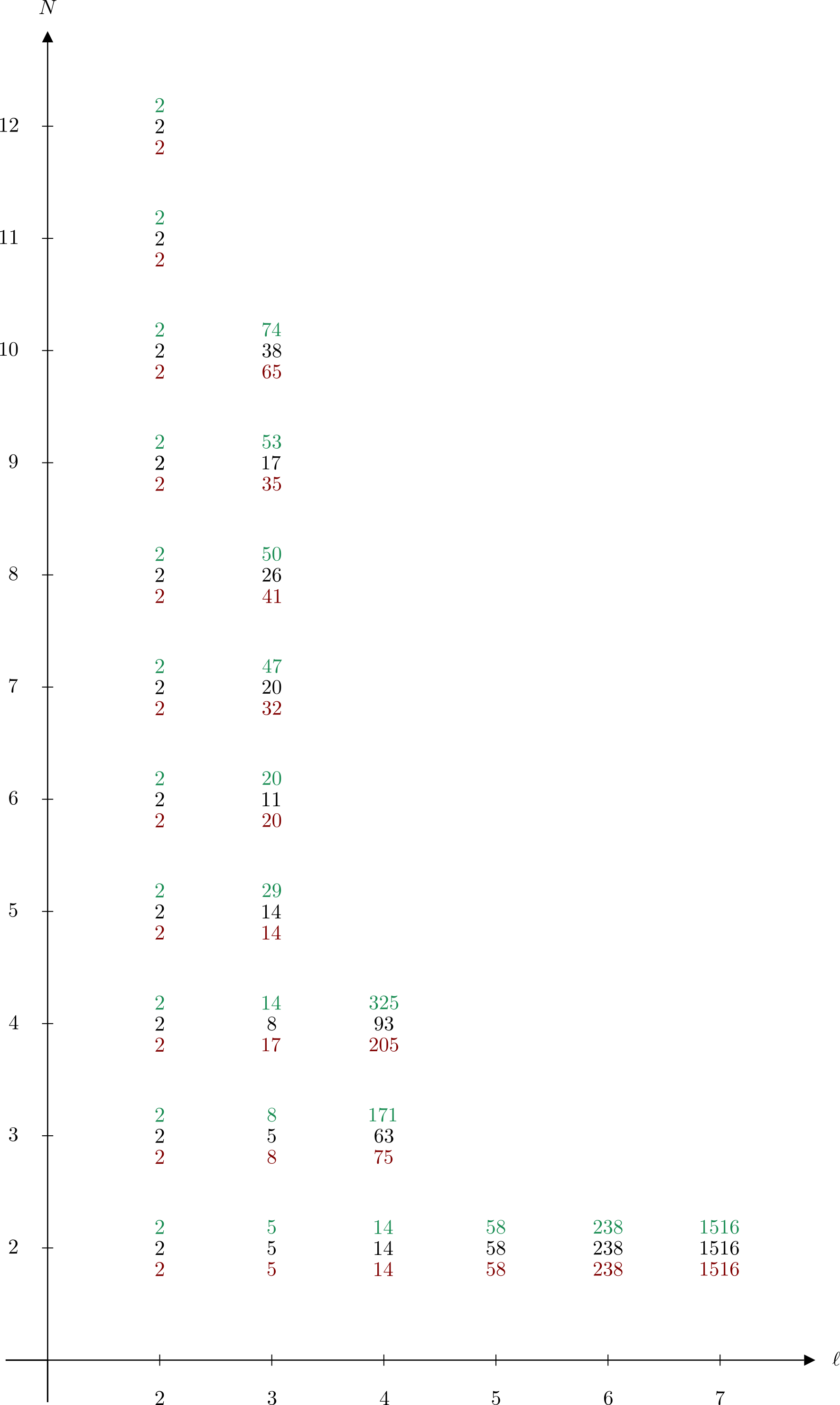}
\caption{The number of extreme points of the reduced Kantorovich polytope $\PNreplambda$ respectively of the reduced Monge polytope $\PNrepMongeConv$ is given in green respectively red. The number of extreme points of $\PNreplambda$ that are of Monge-type (see Definition \ref{def:RedMonge}) is depicted in black. Here, as usual, $N$ denotes the number of marginals and $\ell$ the number of states.}
\label{fig:ExtRed}
\end{figure}
\begin{rem}
\label{rem:IntObsTwo}
What follows are interpretations and observations regarding Figure \ref{fig:ExtRed}.
\begin{enumerate}[1)]
\item \label{rem:IntObsTwoOne}
Combining the convention $M_N= \textrm{id}$ with \eqref{eq:ImTwoMar}, it is obvious that the symmetric Kantorovich polytope for $2$ marginals and $\ell$ states $\mathcal{P}_{\textrm{sym,}\overline{\lambda}}\left( X^2 \right)$ coincides with the reduced Kantorovich polytope for $2$ marginals and $\ell$ states $\mathcal{P}_{2\textrm{-rep,}\overline{\lambda}}\left( X^2 \right)$. 
This fact was already mentioned above.  It was established in Remark \ref{rem:IntObsOne} that every extreme point of $\mathcal{P}_{\textrm{sym,}\overline{\lambda}}\left( X^2 \right)$ is a symmetrized permutation matrix, i.e., the image of a permutation matrix under the symmetrization operator \eqref{eq:SymOp}. In the setting of $2$ marginals, symmetrized permutation matrices exactly correspond to symmetrized Monge states. See Remark \ref{rem:IntObsOne} for further considerations of the case $N=2$. 
\item \label{rem:IntObsTwoTwo}
In the case $\ell=2$, Figure \ref{fig:ExtRed} depicts that in the considered cases, the reduced Kantorovich polytope $\PNreplambda$ has two extreme points both of which are in Monge-form. Hence, in any considered case the line segment $\PNreplambda$ coincides with the respective reduced Monge polytope $\PNrepMongeConv$. 
\newline 
One can prove by elementary arguments that this holds true for an arbitrary number of marginals $N$ in the case of $2$ sites. In a little more detail, considering the dimension of $\PNreplambda$ in the given case and parametrising the elements of $\PNreplambda$ by their off-diagonal element allows us to deduce that the two extreme points of $\PNreplambda$  are given by 
\begin{equation}
\label{eq:TwoStatesExtrOne}
\mu^{(1)} = M_2\left(\frac{1}{2} \psi_N\left( \delta_1\right)+\frac{1}{2}\psi_N\left(\delta_2\right)\right) \end{equation}
\begin{equation}
\label{eq:TwoStatesExtrTwo}
\mu^{(2)}= \begin{cases} M_2\left( \psi_N \left( \frac{1}{2}\delta_1+\frac{1}{2} \delta_2\right) \right) & \textrm{ if } N \textrm{ is even}\\
M_2\left( \frac{1}{2} \psi_N \left( \frac{N-1}{2N}\delta_1+\frac{N+1}{2N} \delta_2\right) +\frac{1}{2} \psi_N \left( \frac{N+1}{2N}\delta_1+\frac{N-1}{2N} \delta_2\right) \right) & \textrm{ if } N \textrm{ is odd}
\end{cases}
\end{equation}
or in pedestrian notation,
\begin{equation*}
\mu^{(1)} = \begin{pmatrix} \frac{1}{2} & 0 \\ 0 & \frac{1}{2}\end{pmatrix}
\end{equation*}
\begin{equation*}
\mu^{(2)}= \begin{cases} \frac{1}{4(N-1)}\begin{pmatrix} N-2 & N \\ N & N-2\end{pmatrix} & \textrm{ if } N \textrm{ is even}\\
\frac{1}{4N}\begin{pmatrix} N-1 & N+1 \\ N+1 &N-1\end{pmatrix} & \textrm{ if } N \textrm{ is odd}. 
\end{cases}
\end{equation*}
As the coefficients in \eqref{eq:TwoStatesExtrOne} and \eqref{eq:TwoStatesExtrTwo} are integer multiples of $\frac{1}{\ell}$ both extreme points are $2$-point marginals of symmetric Kantorovich couplings in Monge-form and therefore they are themselves elements of the reduced Kantorovich polytope $\PNreplambda$ which are of Monge-type (see Definition \ref{def:RedMonge}). Note that $\mu^{(2)}$, which is of Monge-type and therefore describes a correlated or in other words deterministic state, converges for $N \to \infty$ to the independent measure $\lambda \otimes \lambda$ for $\lambda=\left( \frac{1}{2}\delta_1+\frac{1}{2}\delta_2\right)$. 
\newline
These findings coincide with the results in \cite{FMPCK13}, where a model problem of $N$ particles on $2$ sites was considered. There also the set of $N$-representable $2$-plans $\PNreptwo$ for $X$ consisting of $2$ distinct elements is illustrated. Imposing the here given marginal condition on these sets leads to the respective line segment $\PNreplambda$.  
\item
The case of $3$ marginals and $3$ sites, i.e., $N=\ell=3$, is a minimal example of a point in the grid, both with respect to the sum of both parameters $N+\ell$ and with respect to the minimum of both parameters $\min\{N,\ell\}$, such that not every extremal element of the reduced Kantorovich polytope $\PNreplambda$ is of Monge-type. In the considered case $\PNreplambda$ has $8$ extreme points $5$ of which are in Monge-form. By extension $3$ of them are not. They are given by 
\begin{equation}
\label{eq:MinCount}
\frac{1}{2} M_2\left(S\delta_{112}\right) + \frac{1}{2}M_2\left(S\delta_{233}\right)
\end{equation}
and the two states one generates by imposing the role of the second site on the first and third site respectively. \eqref{eq:MinCount} is the unique optimizer of an OT problem stated in \cite{GF18}. This problem corresponds to a molecular packing problem. See \cite{GF18} for further reading. 
\end{enumerate}
\end{rem}
\section{A Model Problem: Optimal Couplings of $N$ Marginals on $3$ Sites for Pairwise Costs}
\label{sec:VisComp}
In the following, we focus our attention on symmetric multi-marginal OT problems \eqref{eq:ProblemN} on $3$ sites, i.e., $X=\{a_1,a_2,a_3\}$. As in Section \ref{sec:IntroRed}, we only consider pairwise symmetric costs and therefore are able to reformulate \eqref{eq:ProblemN} as the lower-dimensional problem \eqref{eq:ProblemTwo}. In particular, the reduced Kantorovich polytope for $N$ marginals and $3$ states corresponds to the respective set of admissible trial states. It is easy to see that in the given setting these polytopes are three-dimensional. As by extension the reduced Monge polytope is at most three-dimensional, we are able to visually compare both approaches. 
\newline
\newline
The following visualizations in Figure \ref{fig:VisComp} were generated by extending the above explained calculations and routines in MATLAB \cite{MATLAB2018b} and polymake \cite{POLYMAKE32}.
\begin{figure}[htbp]
\captionsetup[subfigure]{labelformat=empty}
\centering
\hspace{1mm}
\subfloat{\includegraphics[width=0.06\linewidth]{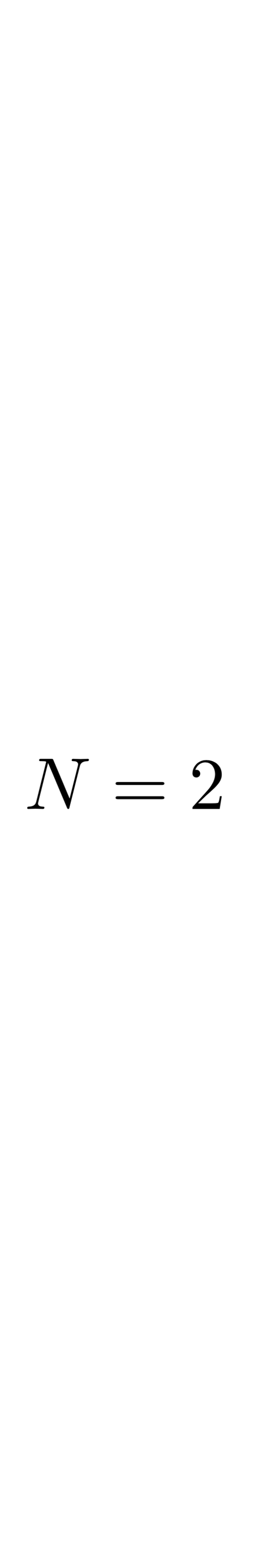}}
\hspace{1mm}
\subfloat{\includegraphics[width=0.44\linewidth]{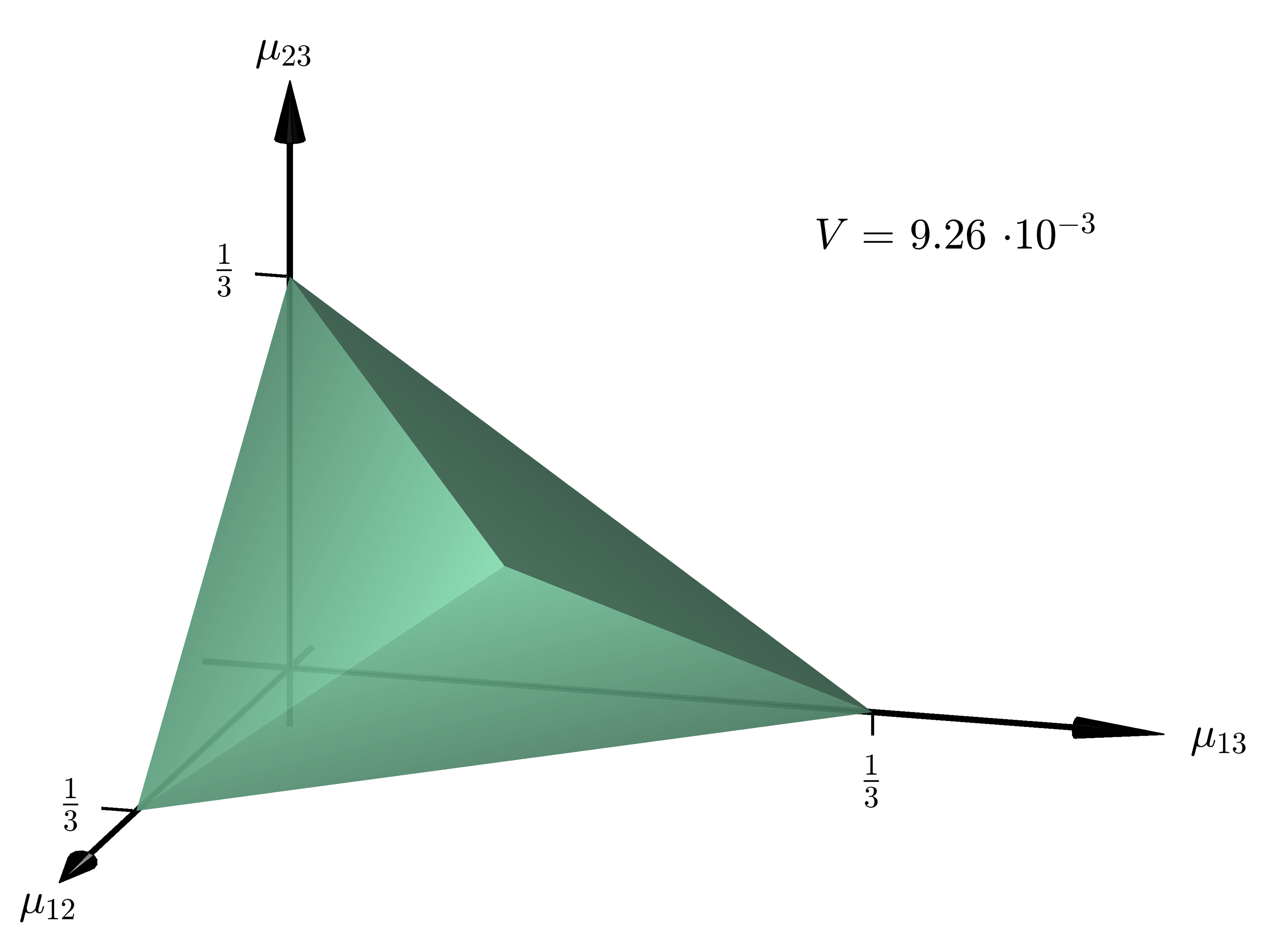}}
\hspace{1mm}
\subfloat{\includegraphics[width=0.44\linewidth]{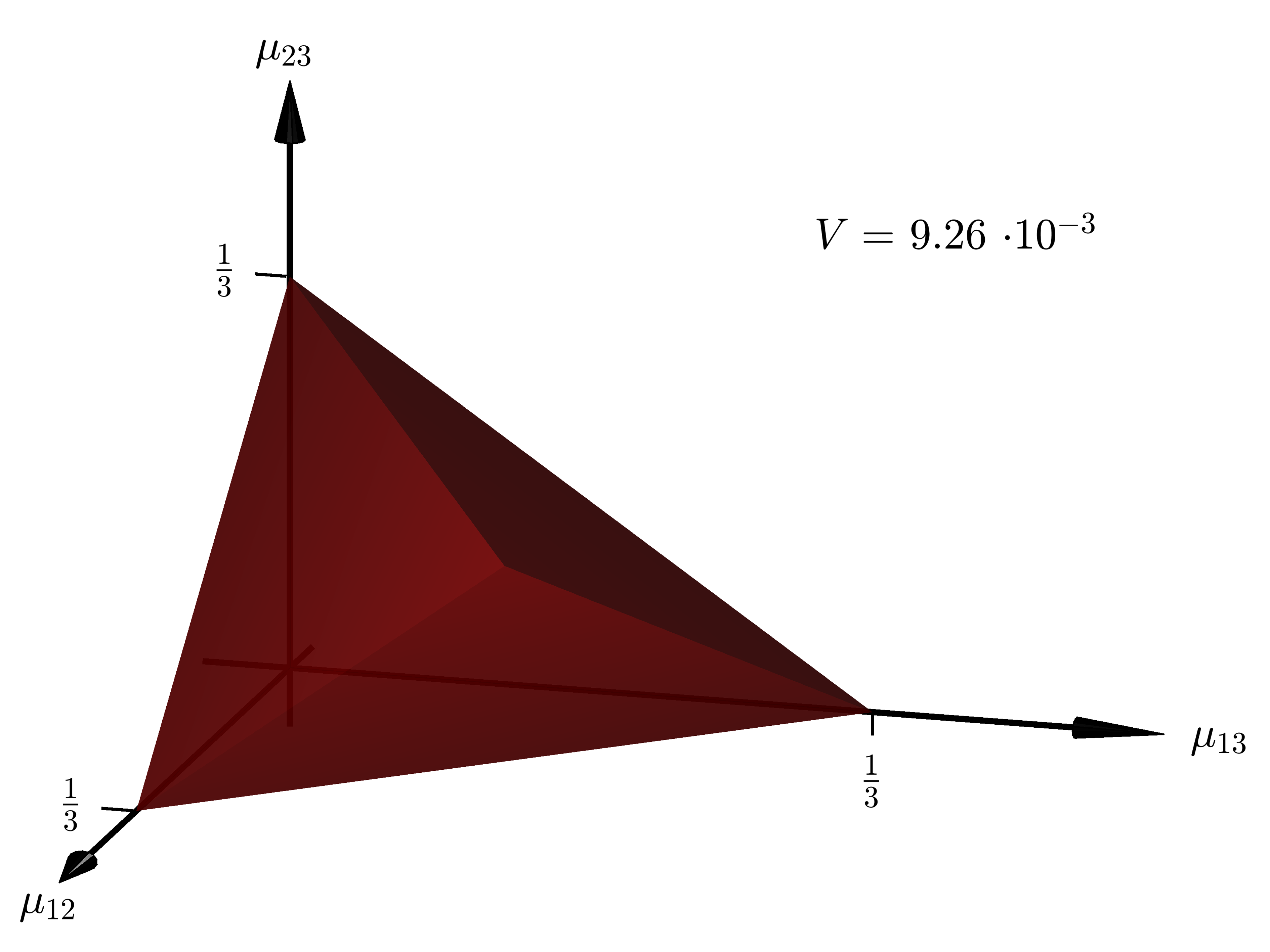}}
\hfill
\\
\hspace{1mm}
\subfloat{\includegraphics[width=0.06\linewidth]{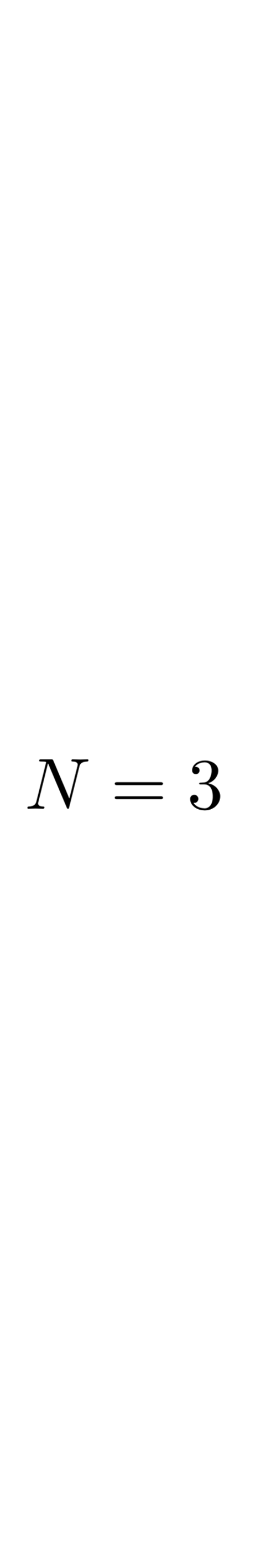}}
\hspace{1mm}
\subfloat{\includegraphics[width=0.44\linewidth]{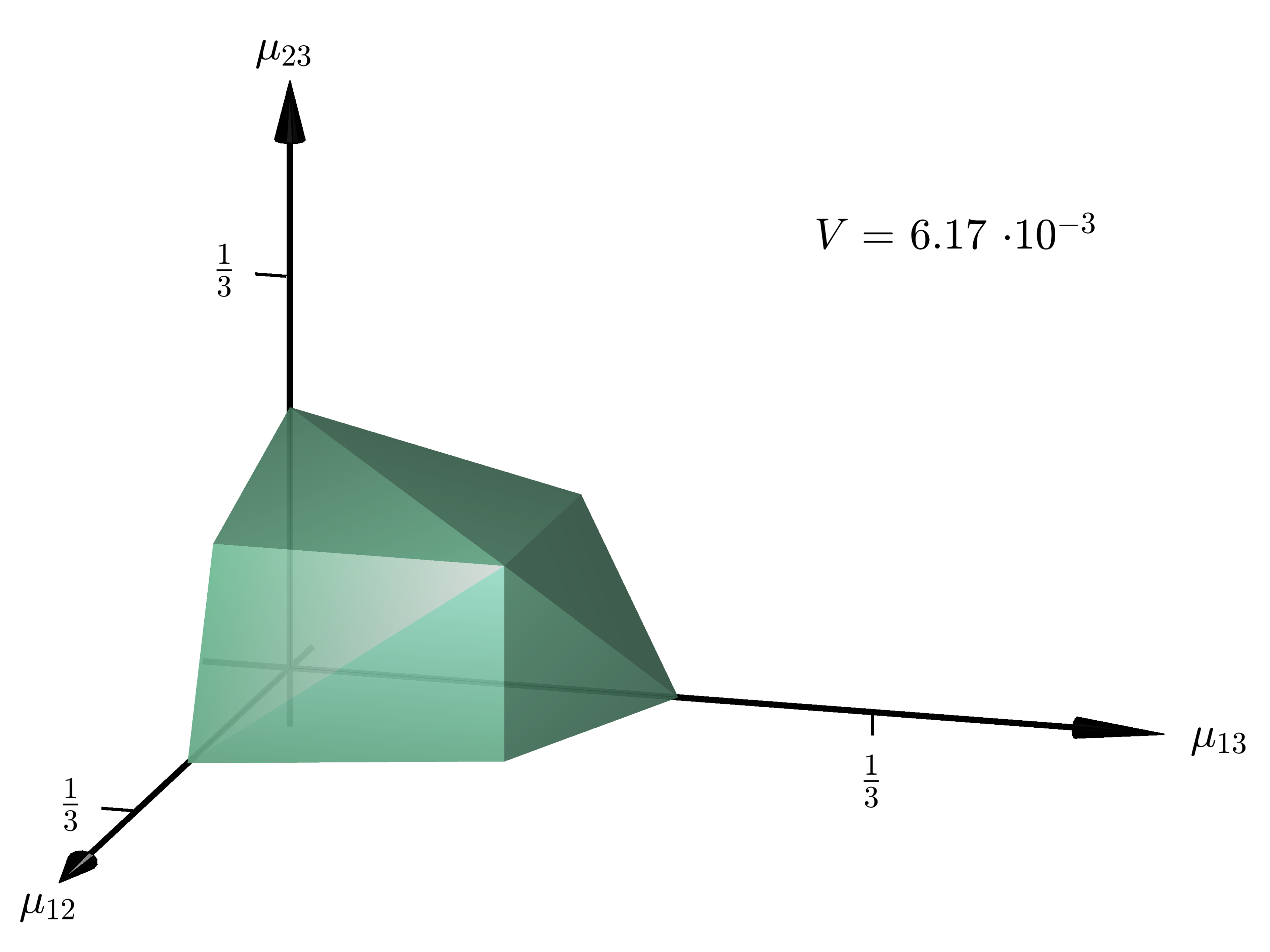}}
\hspace{1mm}
\subfloat{\includegraphics[width=0.44\linewidth]{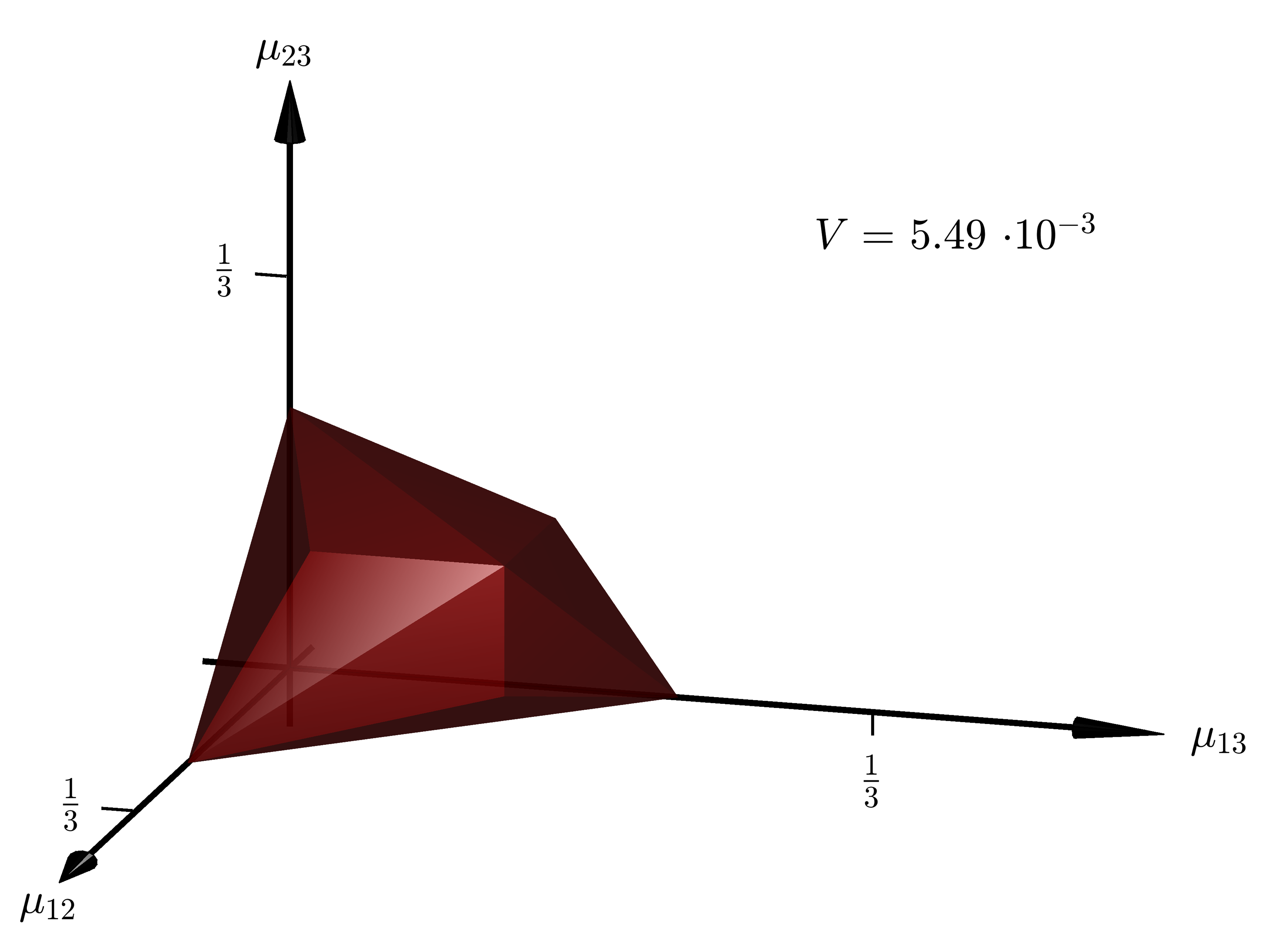}}
\hfill
\\
\hspace{1mm}
\subfloat{\includegraphics[width=0.06\linewidth]{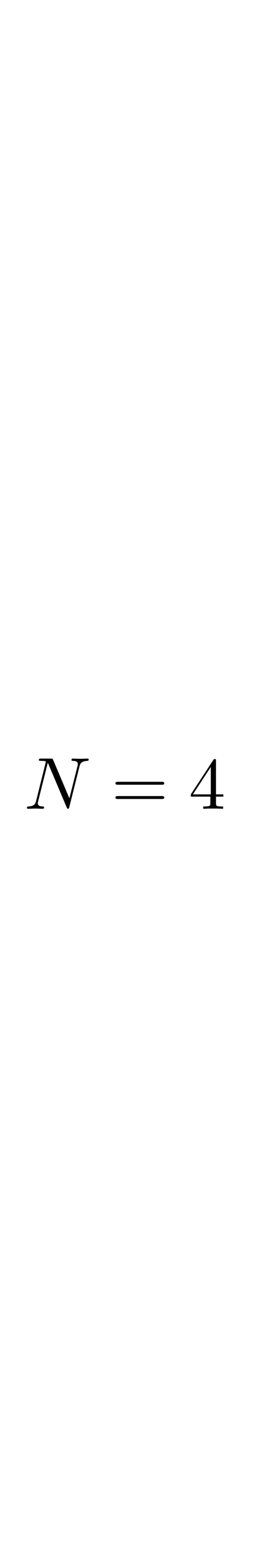}}
\hspace{1mm}
\subfloat{\includegraphics[width=0.44\linewidth]{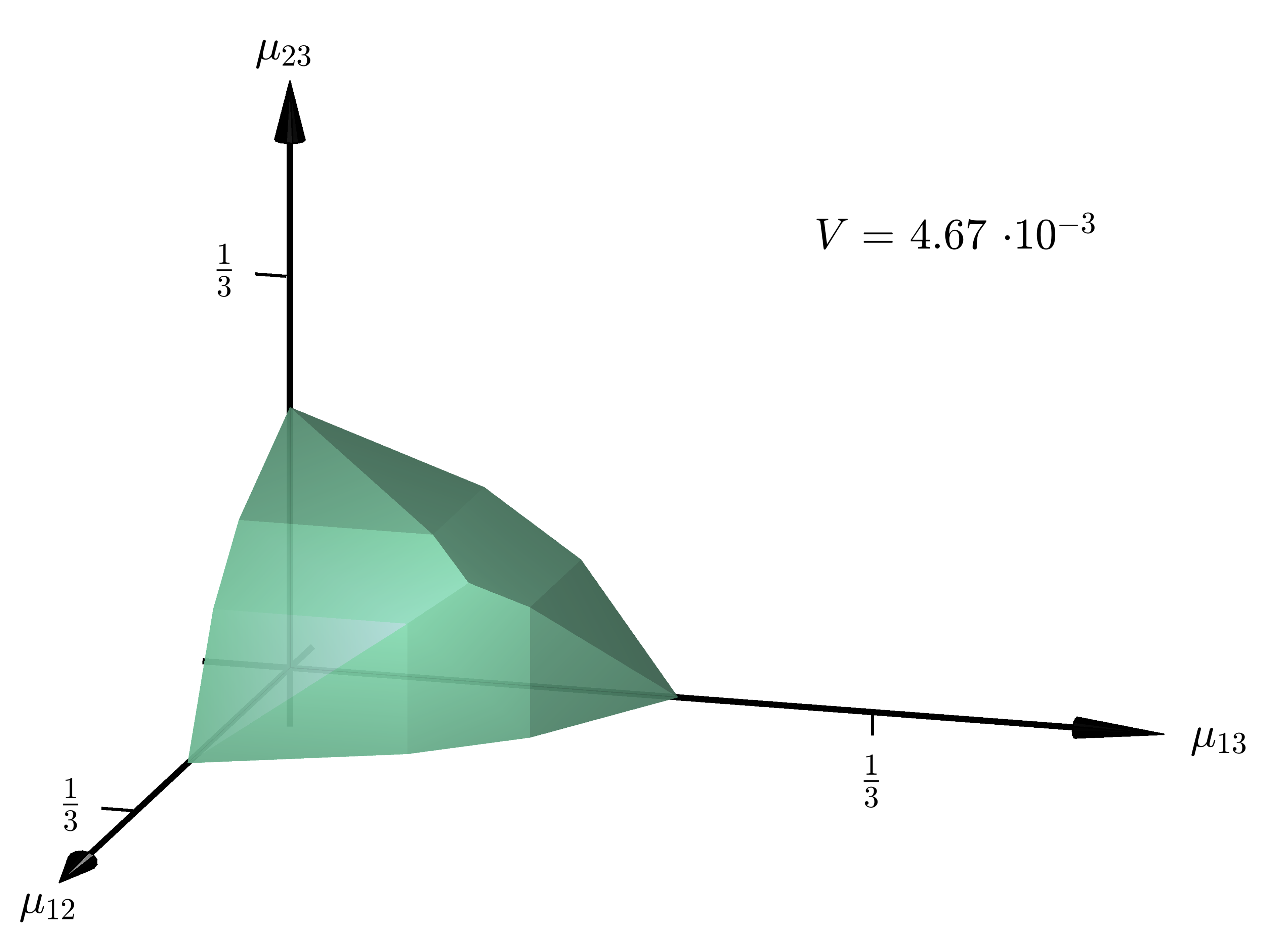}}
\hspace{1mm}
\subfloat{\includegraphics[width=0.44\linewidth]{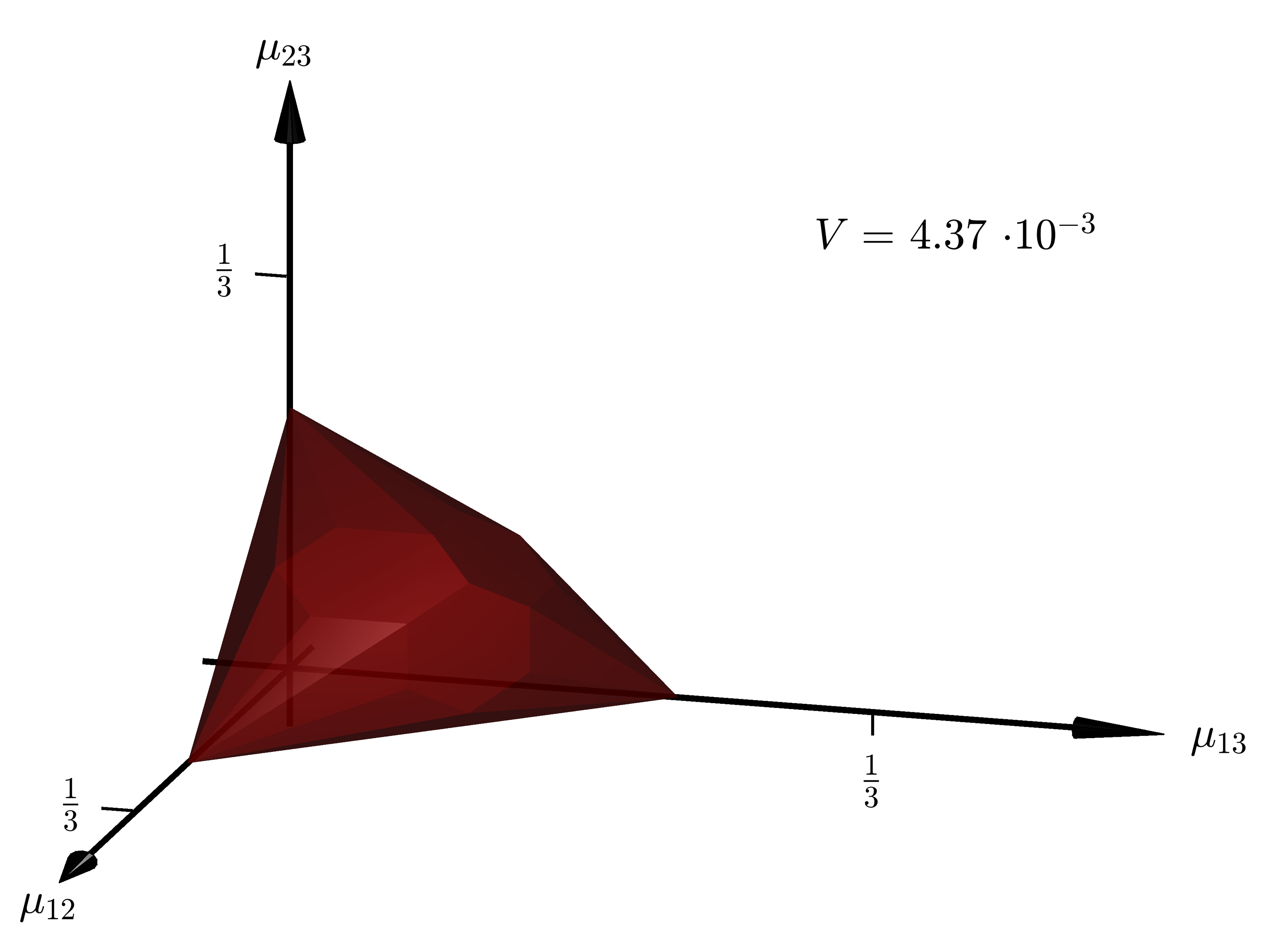}}
\hfill
\end{figure}
\begin{figure}[htbp]
\captionsetup[subfigure]{labelformat=empty}
\centering
\hspace{1mm}
\subfloat{\includegraphics[width=0.06\linewidth]{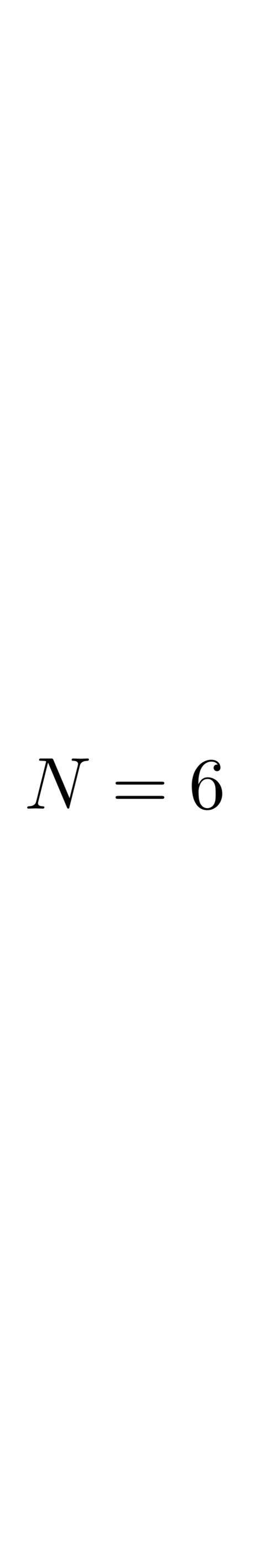}}
\hspace{1mm}
\subfloat{\includegraphics[width=0.44\linewidth]{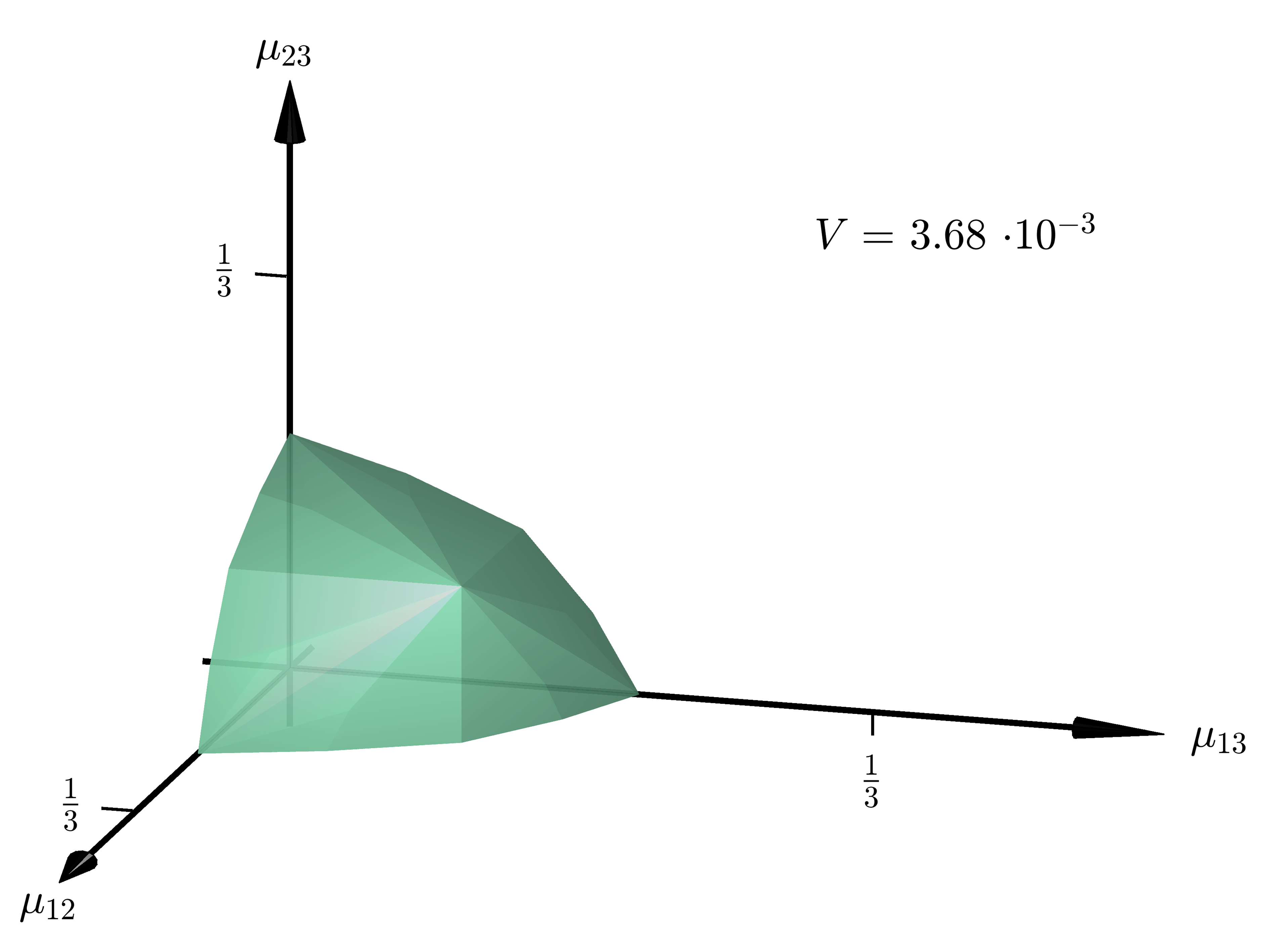}}
\hspace{1mm}
\subfloat{\includegraphics[width=0.44\linewidth]{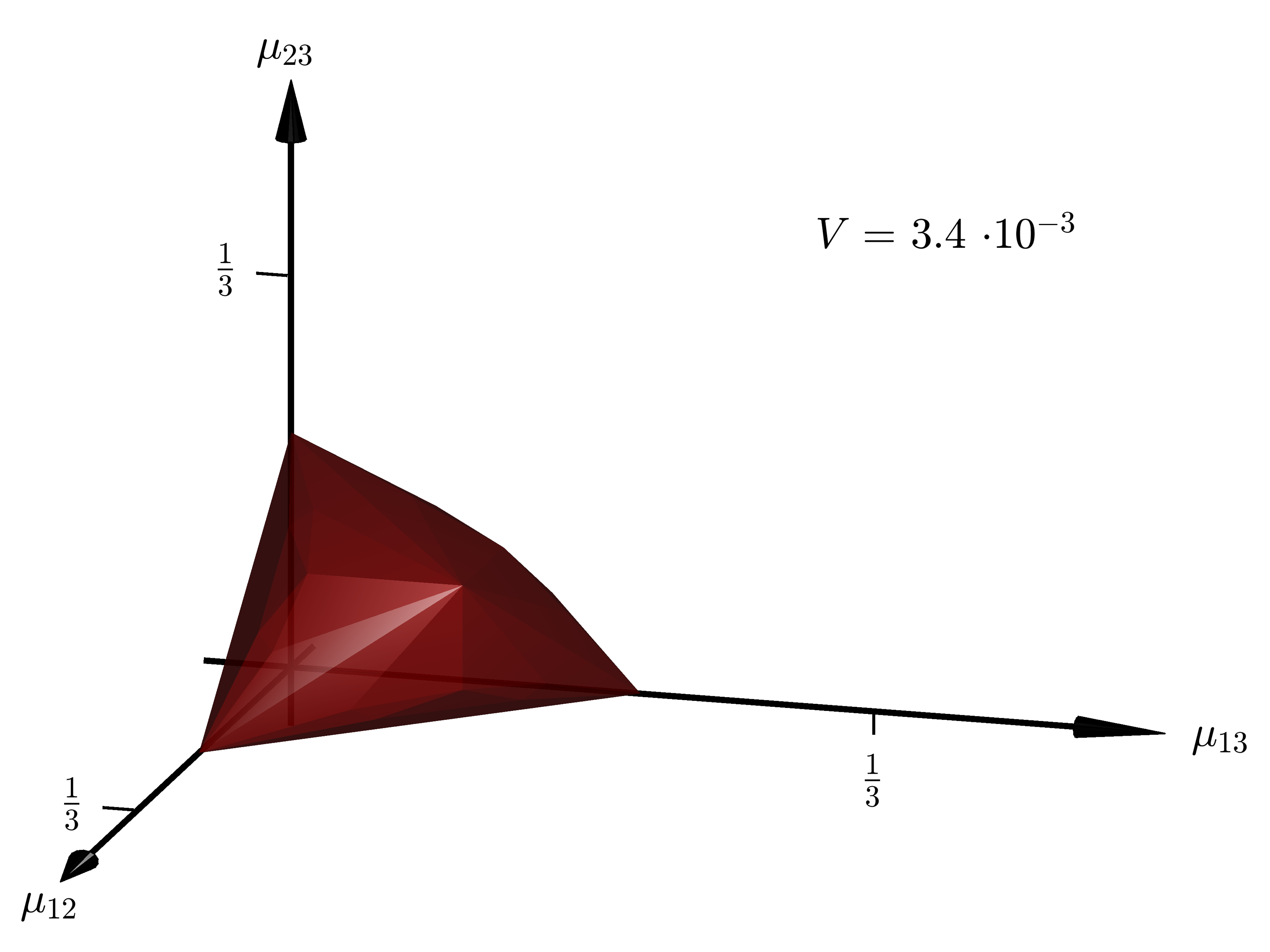}}
\hfill
\\
\hspace{1mm}
\subfloat{\includegraphics[width=0.06\linewidth]{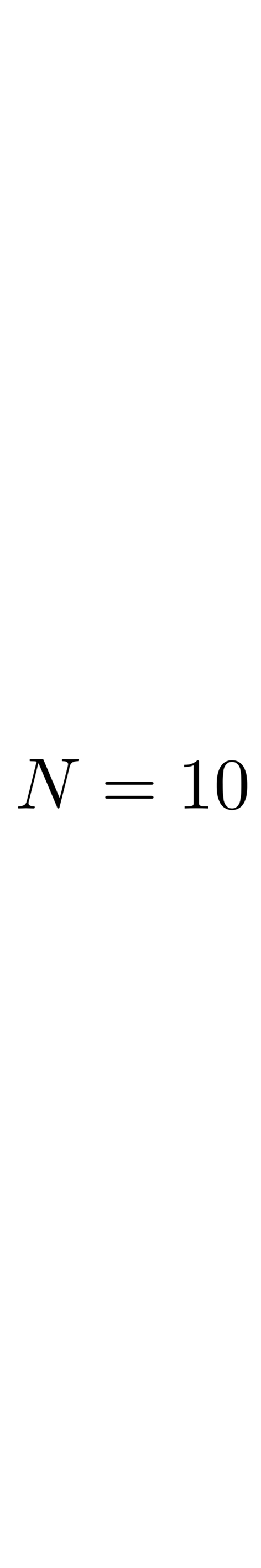}}
\hspace{1mm}
\subfloat{\includegraphics[width=0.44\linewidth]{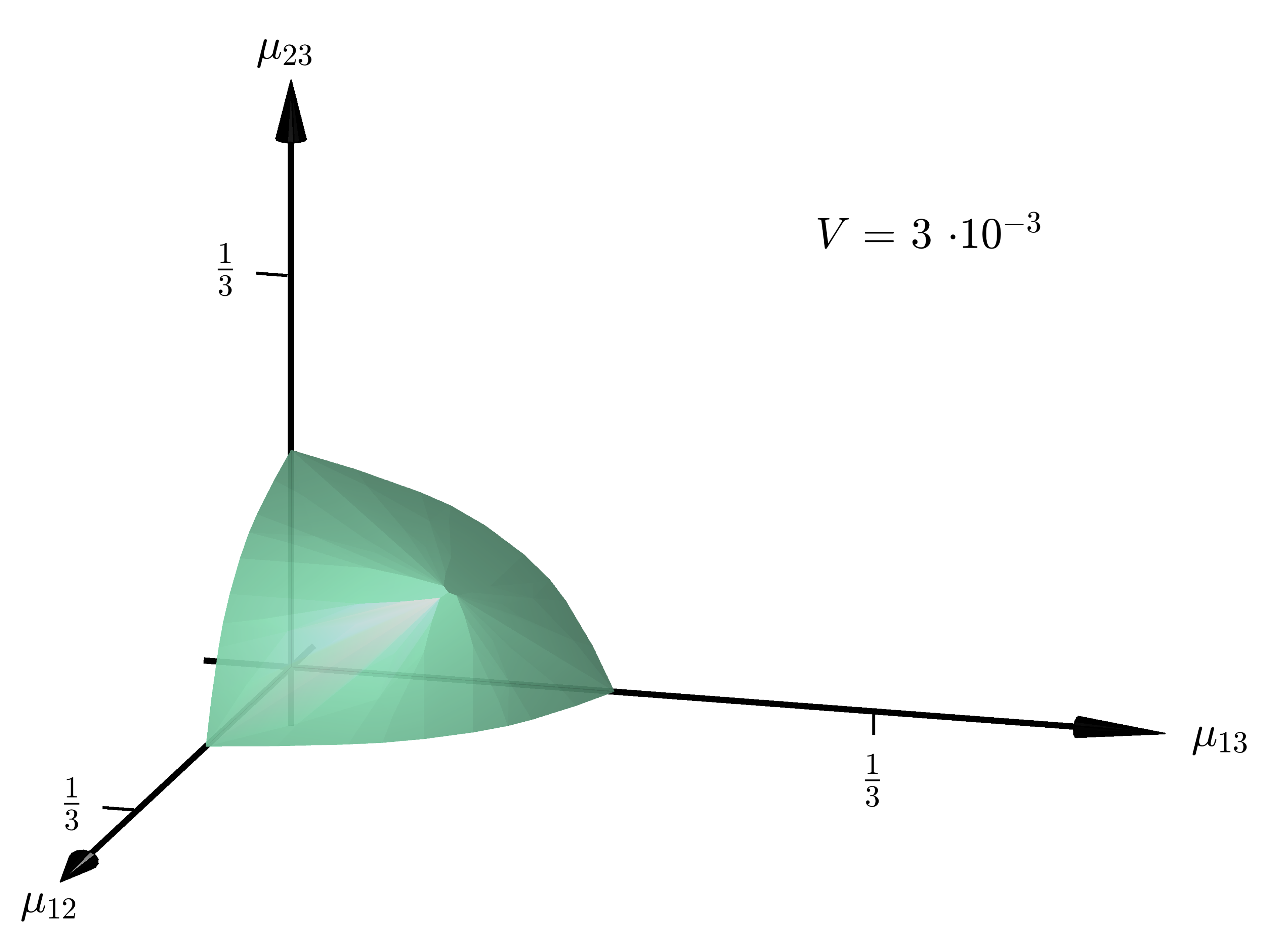}}
\hspace{1mm}
\subfloat{\includegraphics[width=0.44\linewidth]{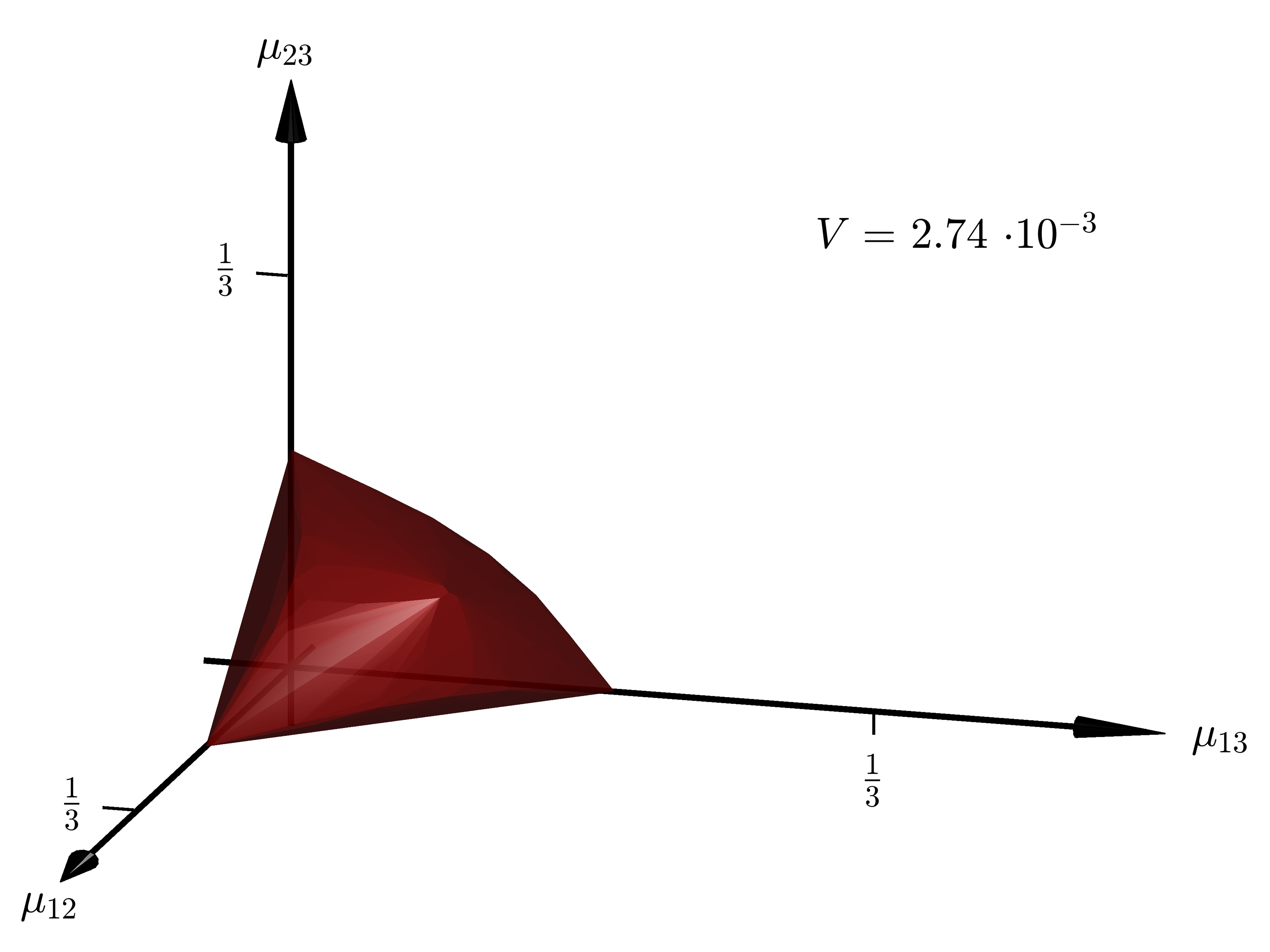}}
\hfill
\caption{The reduced Kantorovich respectively Monge polytope for $N$ marginals and $3$ states is visualized for $N=2,3,4,6$ and $10$ in green respectively red. The plots are arranged in ascending order with respect to $N$. The elements $\left(\mu_{ij}\right)_{i,j=1}^3$ of the polytopes are parametrized by their off-diagonal entries $\mu_{12},\mu_{13}$ and $\mu_{23}$. In the case of three marginals the reduced polytopes were initially depicted in \cite{GF18}. $V$ indicates the volume of the corresponding polytope. The volumetric ratio, reduced Monge polytope to reduced Kantorovich polytope, is depicted in Figure \ref{fig:Osc}.}
\label{fig:VisComp}
\end{figure}
\newline
\newline
\noindent Note that by definition the reduced Monge polytope is always contained in the reduced Kantorovich polytope independently of the number of marginals $N$ and the number of sites $\ell$. Some of the extreme points of the reduced Monge polytope are also extremal with respect to the reduced Kantorovich polytope and some lie on faces or in the interior of the latter (see Figure \ref{fig:ExtRed} for specific numbers). 
\newline
\newline
In the setting of $\ell=3$ sites, the numerical analysis of the reduced setting discussed in Section \ref{sec:IntroRed} yields that in the case of $N=2,3,\ldots,9$ and $10$ marginals there are always $5$ prominent extreme points of the reduced Kantorovich polytope that are of Monge-type. This is also indicated by Figure \ref{fig:KanMonge} as well as Figure \ref{fig:VisComp}. In the illustrations they can be identified with the four extreme points on the co-ordinate axes, including the origin, as well as the 'peak' in the front of the polytopes. In formulas these extreme points can be written as depicted in Table \ref{tab:ModProb}.
\begin{table}[htb]
\footnotesize
\centering 
\begin{tabular}{|c|c|l|c|}
\hline 
\multicolumn{2}{|c|}{Nomenclature} & \multicolumn{1}{|c|}{Abstract Notation}  & Matrix Notation \\ 
\hline 
\hline
\rule{0pt}{3.5ex}   
&  & $\frac{1}{3}M_2 \psi_N (\delta_1)$ & \multirow{3}{*}{$\begin{pmatrix}
\frac{1}{3} & 0 & 0 \\
 0 & \frac{1}{3} & 0 \\
0 & 0 & \frac{1}{3}  \\
\end{pmatrix}$} \\ 
$EA^{(N)}$& $EA^{(N)}$ & $ + \frac{1}{3}M_2 \psi_N (\delta_2) $ &  \\ 
&  & $+ \frac{1}{3} M_2 \psi_N (\delta_3)$ &  \\[6pt] 
\hline 
\rule{0pt}{7ex}   
& $ER^{(3m)}$ & $M_2 \psi_N \left( \frac{1}{3}\delta_1 + \frac{1}{3}\delta_2 + \frac{1}{3}\delta_3 \right) $& $\begin{pmatrix}
\frac{N-3}{9(N-1)} & \frac{N}{9(N-1)} & \frac{N}{9(N-1)} \\
\frac{N}{9(N-1)} & \frac{N-3}{9(N-1)}  & \frac{N}{9(N-1)} \\
\frac{N}{9(N-1)} & \frac{N}{9(N-1)} & \frac{N-3}{9(N-1)}  \\
\end{pmatrix}$ \\[26pt] 
&  & $\frac{1}{3} M_2 \psi_N \left( \frac{m}{N}\delta_1 + \frac{m}{N}\delta_2 + \frac{m+1}{N}\delta_3 \right) $ & \multirow{3}{*}{$\begin{pmatrix}
\frac{N-2}{9N} & \frac{N+1}{9N} & \frac{N+1}{9N} \\
\frac{N+1}{9N} & \frac{N-2}{9N}  & \frac{N+1}{9N} \\
\frac{N+1}{9N} & \frac{N+1}{9N} & \frac{N-2}{9N}  \\
\end{pmatrix}$} \\
$ER^{(N)}$  & $ER^{(3m+1)}$ & $+ \frac{1}{3} M_2 \psi_N \left( \frac{m}{N}\delta_1 + \frac{m+1}{N}\delta_2 + \frac{m}{N}\delta_3 \right) $ &  \\ 
&  & $ + \frac{1}{3} M_2 \psi_N \left( \frac{m+1}{N}\delta_1 + \frac{m}{N}\delta_2 + \frac{m}{N}\delta_3 \right) $ & \multirow{7}{*}{$\begin{pmatrix}
\frac{N-2}{9N} & \frac{N+1}{9N} & \frac{N+1}{9N} \\
\frac{N+1}{9N} & \frac{N-2}{9N}  & \frac{N+1}{9N} \\
\frac{N+1}{9N} & \frac{N+1}{9N} & \frac{N-2}{9N}  \\
\end{pmatrix}$} \\[12pt]
& & $\frac{1}{3} M_2 \psi_N \left( \frac{m}{N}\delta_1+ \frac{m+1}{N}\delta_2 + \frac{m+1}{N}\delta_3 \right) $ & \\
& $ER^{(3m+2)}$ & $ + \frac{1}{3} M_2 \psi_N \left( \frac{m+1}{N}\delta_1 + \frac{m}{N}\delta_2 + \frac{m+1}{N}\delta_3 \right)$ &  \\
& & $ + \frac{1}{3} M_2 \psi_N \left( \frac{m+1}{N}\delta_1 + \frac{m+1}{N}\delta_2 + \frac{m}{N}\delta_3 \right) $ & \\[8pt] 
\hline  
\rule{0pt}{3.5ex}  
& \multirow{3}{*}{$E12^{(2m)}$} & \multirow{3}{*}{$\frac{2}{3} M_2 \psi_N \left(\frac{1}{2} \delta_1 + \frac{1}{2} \delta_2 \right)+ \frac{1}{3} M_2 \psi_N \delta_3$}  & \multirow{3}{*}{$\begin{pmatrix}
\frac{N-2}{6(N-1)} & \frac{N}{6(N-1)} & 0 \\
\frac{N}{6(N-1)} & \frac{N-2}{6(N-1)} & 0 \\
0 & 0 & \frac{1}{3}  \\
\end{pmatrix}$} \\[19pt] 
$E12^{(N)}$ &  &  &  \\ 
 &  & $\frac{1}{3} M_2 \psi_N \left(\frac{N-1}{2N} \delta_1 + \frac{N+1}{2N} \delta_2 \right)$ & \multirow{3}{*}{$\begin{pmatrix}
\frac{N-1}{6N} & \frac{N+1}{6N} & 0 \\
\frac{N+1}{6N} & \frac{N-1}{6N} & 0 \\
0 & 0 & \frac{1}{3}  \\
\end{pmatrix}$} \\ 
& $E12^{(2m+1)}$ & $ + \frac{1}{3} M_2 \psi_N  \left(\frac{N+1}{2N} \delta_1 + \frac{N-1}{2N} \delta_2 \right)$  &  \\ 
&  & $+ \frac{1}{3}M_2 \psi_N(\delta_3)$ &  \\[6pt] 
\hline 
\end{tabular} 
\caption{The prominent extreme points $EA^{(N)}$, $ER^{(N)}$ and $E12^{(N)}$ of the reduced Kantorovich polytope are depicted in abstract and matrix notation. Hereby $N$ corresponds to the number of marginals and $m \in \mathbb{N}_0$ is a non-negative integer, allowing us to distinguish between the various cases regarding $N$. As all the coefficients in the 'Abstract Notation'-column  are integer multiples of $\frac{1}{3}$, these extreme points are of Monge-type.}
\label{tab:ModProb}
\end{table}
\noindent In Figure \ref{fig:VisComp}, $EA^{(N)}$ corresponds to the origin, $ER^{(N)}$ to the 'peak' and $E12^{(N)}$ to the non-origin extreme point on the $\mu_{12}$-axis. $E12^{(N)}$ assumes an exemplary role in Table \ref{tab:ModProb}. The corresponding extreme points on the $\mu_{13}$- respectively $\mu_{23}$-axis can be expressed analogously in abstract as well as matrix notation and will be denoted by $E13^{(N)}$ respectively $E23^{(N)}$.
\newline \newline
So far we know by numerical analysis that $EA^{(N)}$,  $ER^{(N)}$, $E12^{(N)}$, $E13^{(N)}$ and $E23^{(N)}$ are extreme points of the reduced Kantorovich polytope $\PNreplambda$ in the cases of $N = 2,3,\ldots, 9$ and $10$ marginals. One can prove that this holds true for a general number $N \geq 2$ of marginals. For $E12^{(N)}$, $E13^{(N)}$ and $E23^{(N)}$ one can show this by following the same approach taken in Remark \ref{rem:IntObsTwo} \ref{rem:IntObsTwoTwo}). In case of $EA^{(N)}$ and $ER^{(N)}$ it is an immediate consequence of Theorem \ref{the:ModProp}. In the following $d: X \times X \to \mathbb{R}$ will denote the discrete metric defined by 
 \begin{equation*}
 d(x,y):= \begin{cases} 1 & \textrm{ if } x \neq y \\ 0 & \textrm{ if } x = y. \end{cases}
 \end{equation*}
\begin{thm}
\label{the:ModProp}
We consider the reduced multi-marginal OT problem \eqref{eq:ProblemTwo} for $N \geq 2$ marginals and $\ell = 3$ sites. 
\begin{enumerate}[a)]
\item For the attractive cost function $d: X \times X \to \mathbb{R}$ the unique minimizer is given by $EA^{(N)}$.
\item  For the repulsive cost function $c_R: X \times X \to \mathbb{R}$ given by 
\begin{equation}
\label{eq:CostR}
c_R(x,y):= \begin{cases} \frac{1}{d(x,y)} & \textrm{ if } x \neq y \\
B & \textrm{ if } x = y
\end{cases}
\end{equation} 
for some constant $B>1$, the unique minimizer is given by $ER^{(N)}$.
\end{enumerate}
\end{thm}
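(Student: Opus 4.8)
\noindent The plan is to reduce both claims, via the uniform-marginal constraint, to elementary facts about the symmetric matrix $\mu=(\mu_{ij})_{i,j=1}^{3}$ representing a trial state, and (for part~b) to a one-line consequence of the two-point-marginal formula \eqref{eq:ForTwoMar}. For part~a), since $d$ is the discrete metric one has
\begin{equation*}
\int_{X^{2}} d(x,y)\,d\mu(x,y)=\sum_{i\neq j}\mu_{ij}\;\ge\;0\qquad\text{for every }\mu\in\PNreplambda ,
\end{equation*}
with equality exactly when all off-diagonal entries vanish; together with $\sum_{j}\mu_{ij}=1/3$ this forces $\mu_{ii}=1/3$, i.e., $\mu=EA^{(N)}$. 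It then remains only to check admissibility, i.e., that $EA^{(N)}\in\PNreplambda$: the symmetric measure $\frac{1}{3}\sum_{i=1}^{3}\delta_{(a_i,\dots,a_i)}$ on $X^{N}$ has one-point marginal $\overline{\lambda}$ and two-point marginal $EA^{(N)}$, so it is a representing measure (indeed a Monge-form one, with $T_1=\dots=T_N=\textrm{id}$). Hence $EA^{(N)}$ attains the global lower bound $0$ and, being the only admissible measure doing so, is the unique minimizer.

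For part~b), again by the discrete metric $c_R(x,y)=1$ for $x\neq y$ and $c_R(x,y)=B$ for $x=y$, so using $\sum_{i,j}\mu_{ij}=1$ one gets
\begin{equation*}
\int_{X^{2}} c_R(x,y)\,d\mu(x,y)=1+(B-1)\sum_{i=1}^{3}\mu_{ii},
\end{equation*}
and since $B>1$ the problem is equivalent to minimizing the total diagonal mass $\sum_{i}\mu_{ii}$ over $\PNreplambda$. Writing $\mu=M_2R\alpha$ with $\alpha\in\Pcoef$ (Lemma~\ref{lem:IsoRel}), that is, $\alpha$ a probability weight on $\PQN$ with barycenter $\overline{\lambda}$, and summing the diagonal entries of \eqref{eq:ForTwoMar} (where $\lambda\otimes\lambda$ contributes $\sum_{i}\lambda_i^{2}$ and $(\textrm{id,id})\#\lambda$ contributes $\sum_{i}\lambda_i=1$), I obtain
\begin{equation*}
\sum_{i=1}^{3}\mu_{ii}=\frac{N}{N-1}\sum_{\lambda\in\PQN}\alpha_{\lambda}\sum_{i=1}^{3}\lambda_i^{2}-\frac{1}{N-1}.
\end{equation*}
So one must minimize $\sum_{\lambda}\alpha_{\lambda}\sum_{i}\lambda_i^{2}$ over such $\alpha$; bounding this below by $\min_{\lambda\in\PQN}\sum_{i}\lambda_i^{2}$, equality holds precisely when $\alpha$ is supported on the set of minimizers of $\sum_{i}\lambda_i^{2}$ over $\PQN$.

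The remaining ingredient is the elementary fact that, among compositions $(k_1,k_2,k_3)$ of $N$ into three nonnegative parts, $k_1^{2}+k_2^{2}+k_3^{2}$ is minimized, uniquely up to permutation, by the balanced composition, namely $(m,m,m)$, $(m,m,m+1)$ or $(m,m+1,m+1)$ according to whether $N=3m$, $N=3m+1$ or $N=3m+2$; indeed whenever two parts differ by at least $2$, moving one unit from the larger to the smaller part strictly decreases the sum of squares. Hence the minimizer set of $\sum_{i}\lambda_i^{2}$ over $\PQN$ is $\{\overline{\lambda}\}$ when $3\mid N$, and the three permutations of the balanced vector otherwise; imposing the barycenter constraint $\sum_{\lambda}\alpha_{\lambda}\lambda=\overline{\lambda}$ then forces $\alpha=\delta_{\overline{\lambda}}$, respectively the weight $1/3$ on each of those three permutations (a short linear computation shows the three weights must coincide). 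In each case $\alpha$, hence $\mu=M_2R\alpha$, is uniquely determined; writing out the resulting matrix and comparing with Table~\ref{tab:ModProb} identifies it as $ER^{(N)}$.

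The computations above are routine, and the one point requiring care is the uniqueness assertion, since a priori a minimizing $\mu$ might admit several representing measures. This is resolved by the displayed identity for $\sum_{i}\mu_{ii}$: it shows that \emph{every} $\alpha\in\Pcoef$ with $M_2R\alpha$ a minimizer must itself minimize $\sum_{\lambda}\alpha_{\lambda}\sum_{i}\lambda_i^{2}$, and the argument above determines that minimizing $\alpha$ uniquely, whence $\mu=M_2R\alpha$ is itself unique. The main difficulty, such as it is, amounts to keeping the three residue classes of $N$ modulo $3$ aligned with the three cases defining $ER^{(N)}$ in Table~\ref{tab:ModProb}.
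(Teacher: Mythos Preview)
Your proof is correct and follows essentially the same approach as the paper: both parts reduce to optimizing the diagonal (equivalently off-diagonal) mass of $\mu$, and part~b) then passes via \eqref{eq:ForTwoMar} to a discrete quadratic optimization over $\PQN$ before re-imposing the barycenter constraint to single out $ER^{(N)}$. Your balanced-composition (unit-swap) argument for the discrete step is a bit more streamlined than the paper's parameter-$r$ case analysis in \eqref{eq:ModProbProofFive}, but the overall architecture and the uniqueness reasoning are the same.
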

\begin{proof} In the following the elements of the reduced Kantorovich polytope will always be interpreted as matrices. Along those lines $D$ respectively $CR$ corresponds to the matrix notation of $d$ respectively $c_R$, i.e., $D_{ij} := d(a_i,a_j) $ respectively $CR_{ij} := c_R(a_i,a_j)$ for $i,j \in \{1,2,\ldots,\ell\}$, and $\langle \cdot, \cdot \rangle$ denotes the standard matrix scalar product. 
\begin{enumerate}[a)]
\item Note that by non-negativity of the cost function $d$ the objective value of an arbitrary admissible state  $\mu \in \PNreplambda
$ is non-negative, i.e., $\langle D, \mu \rangle \geq 0$. As $EA^{(N)}$ is admissible and yields an objective value of $0$, i.e., $\langle D , EA^{(N)} \rangle = 0$, it is an optimizer of the corresponding problem \eqref{eq:ProblemTwo}. Positivity of $D$ in its off-diagonal entries and the marginal constraint ensure that $EA^{(N)}$ is the unique optimizer. 
\item To prove the second assertion, we drop the marginal constraint in a reformulated version of the considered problem \eqref{eq:ProblemTwo} and calculate the extremal elements of $\PNreptwo$, which solve the new optimization problem. There will be a unique convex combination of the optimal extreme points of $\PNreptwo$, namely $ER^{(N)}$, that lies in $\PNreplambda$. This state then corresponds to the unique minimizer of problem \eqref{eq:ProblemTwo}.
\newline
\newline
We consider the problem 
\begin{equation}
\label{eq:ModProbProofOne}
\min_{\mu \in \PNreplambda} \langle CR, \mu \rangle.
\end{equation}
Subsequently changing the objective function to $\langle {CR} - {\mathbb{1}} , \cdot \rangle$, where all the entries of ${\mathbb{1}} \in \mathbb{R}^{3 \times 3}$ are given by $1$, and plugging in the  marginal constraint allows us to reformulate \eqref{eq:ModProbProofOne} as follows 
\begin{equation}
\label{eq:ModProbProofTwo}
\max_{\mu \in \PNreplambda} \mu_{12} + \mu_{13} + \mu_{23}.
\end{equation} 
By dropping the marginal constraint, further restricting the admissible set to the extreme points \eqref{eq:ForTwoMar} of $\PNreptwo$ and rescaling the new admissible set by $N^2$ leads to the new optimization problem 
\begin{equation}
\label{eq:ModProbProofThree}
\max_{\lambda \in N \PQN} \lambda_1 \lambda_2 + \lambda_1 \lambda_3 + \lambda_2 \lambda_3 = \lambda_1 \lambda_2 + (N-\lambda_3) \lambda_3. 
\end{equation}
Assuming $\left( \lambda_1^*, \lambda_2^*, \lambda_3^* \right)$ is an optimizer of problem \eqref{eq:ModProbProofThree}, then elementary calculations show that $\left( \lambda_1^*, \lambda_2^* \right)$ fulfills 
\begin{equation}
\label{eq:ModProbProofFour}
\left( \lambda_1^*, \lambda_2^* \right) \in \begin{cases} \left\{ \left( \frac{r}{2}, \frac{r}{2} \right) \right\} & \textrm{ if } r \textrm{ is even} \\ \left\{ \left( \frac{r-1}{2}, \frac{r+1}{2} \right), \left( \frac{r+1}{2}, \frac{r-1}{2} \right) \right\} & \textrm{ if } r \textrm{ is odd,} \end{cases}
\end{equation}
where $r := N-\lambda_3^*$. Otherwise $\left( \lambda_1^*,\lambda_2^*,\lambda_3^*\right)$ would not be optimal. Note that \eqref{eq:ModProbProofThree} always admits a maximizer as $\PQN$ is finite. 
\newline
This allows us to identify problem \eqref{eq:ModProbProofThree} with the one-parameter optimization problem
\begin{equation}
\label{eq:ModProbProofFive}
\max\left\{ \max_{r\in 2\mathbb{N}_0,r\leq N} -\frac{3}{4}r^2+Nr, \max_{r\in 2\mathbb{N}_0+1, r\leq N} -\frac{3}{4} r^2 +Nr-\frac{1}{4}\right\}.
\end{equation}
Elementary calculations reveal that $r\in \{0,1,\ldots,N\}$ is optimal regarding \eqref{eq:ModProbProofFive} if and only if
\begin{equation*}
r\in \begin{cases}
\{2m\} & \textrm{ if } N=3m \textrm{ for } m \in \mathbb{N}_0\\
\{2m,2m+1\} & \textrm{ if } N=3m+1 \textrm{ for } m \in \mathbb{N}_0\\
\{2m+1,2m+2\} & \textrm{ if } N=3m+2 \textrm{ for } m \in \mathbb{N}_0. 
\end{cases}
\end{equation*}
It immediately follows that $\lambda \in N\PQN$ is optimal with respect to problem \eqref{eq:ModProbProofThree} if and only if 
\begin{equation}
\label{eq:ModProbProofSix}
\lambda \in \begin{cases}
N\left\{\left(\frac{m}{N},\frac{m}{N},\frac{m}{N}\right)\right\} &  \textrm{if } N=3m \textrm{ for } m \in \mathbb{N}_0\\
N\left\{ \left(\frac{m}{N},\frac{m}{N},\frac{m+1}{N}\right),\left(\frac{m}{N},\frac{m+1}{N},\frac{m}{N}\right),\left(\frac{m+1}{N},\frac{m}{N},\frac{m}{N}\right)\right\} & \textrm{if } N=3m+1 \textrm{ for } m \in \mathbb{N}_0\\
N\left\{\left(\frac{m}{N},\frac{m+1}{N},\frac{m+1}{N}\right),\left(\frac{m+1}{N},\frac{m}{N},\frac{m+1}{N}\right),\left(\frac{m+1}{N},\frac{m+1}{N},\frac{m}{N}\right)\right\} & \textrm{if } N=3m+2 \textrm{ for } m \in \mathbb{N}_0.
\end{cases}
\end{equation}
Recall that \eqref{eq:ForTwoMar} allows us (after dropping the factor $N$ in \eqref{eq:ModProbProofSix})  to identify the given maximizers with exactly those extremal elements of $\PNreptwo$ that maximize the sum of their off-diagonal entries. As by Minkowski's theorem any element of $\PNreplambda$ can be written as convex combination of the extreme points of $\PNreptwo$ and in any of the considered cases in \eqref{eq:ModProbProofSix} there are unique coefficients given by 
\begin{align*}
\alpha_1=1 & \hspace{1cm} \textrm{if } N=3m \textrm{ for } m \in \mathbb{N}_0\\
\alpha_1=\alpha_2=\alpha_3=\frac{1}{3} &\hspace{1cm} \textrm{else}
\end{align*}
 that allow us to write $\overline{\lambda}$ as a convex combination of the respective optimizers in \eqref{eq:ModProbProofSix}, it is easy to see that $ER^{(N)}$ as defined in Table \ref{tab:ModProb} is the unique optimizer of \eqref{eq:ModProbProofTwo} and thereby \eqref{eq:ModProbProofOne} for any $N\geq2$.
\end{enumerate}
\end{proof}
\begin{rem}
It is an immediate consequence of Theorem \ref{the:ModProp} that 
\begin{equation*}
\gamma_{GS}:=\frac{1}{3}S\delta_{11\ldots1}+\frac{1}{3}S\delta_{22\ldots2}+\frac{1}{3}S\delta_{33\ldots3}
\end{equation*}
respectively 
\begin{equation*}
\gamma_{C}:=\frac{1}{3}S\delta_{1\tau (1)\ldots \tau ^{(N-1)}(1)}+\frac{1}{3}S\delta_{2\tau (2)\ldots \tau ^{(N-1)}(2)}+\frac{1}{3}S\delta_{3\tau (3)\ldots \tau ^{(N-1)}(3)},
\end{equation*}
where $\tau : \{1,2,3\} \to \{1,2,3\} $ is the cyclic permutation defined by  $\tau (1) = 2$, $\tau (2) = 3$, $\tau (3) = 1$ and $\tau ^{(i)}$ denotes the $i$-th composition of $\tau $ with itself, is a solution to the OT problem \eqref{eq:ProblemN} for the Gangbo-\'{S}wi\k{e}ch cost function $c_{GS}: X^N \to \mathbb{R}$ defined by 
\begin{equation}
\label{eq:CostGS}
c_{GS} \left( x_1, \ldots, x_N \right):= \sum_{1 \leq i < j \leq N} d(x_i,x_j) 
\end{equation}
respectively the Coulomb cost function $c_C: X^N \to \mathbb{R}$ defined by 
\begin{equation*}
c_{C} \left( x_1, \ldots, x_N \right):= \sum_{1 \leq i < j \leq N} c_R(x_i,x_j).
\end{equation*}
Here \eqref{eq:CostGS} is a discretization of the pair-cost considered in \cite{GS98}. Note that one could replace $d(\cdot,\cdot)$ in \eqref{eq:CostGS} with $d(\cdot,\cdot)^p$ for any $p>1$ without changing $c_{GS}$. 
\end{rem}
\noindent Next, we examine the behavior of the sequences $\left( EA^{(N)} \right)_{N\geq 2}$, $\left(ER^{(N)} \right)_{N\geq 2}$, $\left(E12^{(N)} \right)_{N\geq 2}$, $\left(E13^{(N)} \right)_{N\geq 2}$ and $\left(E23^{(N)} \right)_{N\geq 2}$ for $N$ tending to $\infty$. Taking a look at the right column in Table \ref{tab:ModProb}, it is easy to see that the following holds true.
\begin{align*}
EA^{(N)} \xrightarrow[]{N \to \infty} \begin{pmatrix}
\frac{1}{3} & 0 & 0 \\
0 & \frac{1}{3} & 0 \\
0 & 0 & \frac{1}{3} 
\end{pmatrix} & =: EA^{(\infty)},
ER^{(N)}  \xrightarrow[]{N \to \infty}  \begin{pmatrix}
\frac{1}{9} & \frac{1}{9} & \frac{1}{9} \\
\frac{1}{9} & \frac{1}{9} & \frac{1}{9} \\
\frac{1}{9} & \frac{1}{9} & \frac{1}{9} 
\end{pmatrix} =: ER^{(\infty)}, \\
E12^{(N)} & \xrightarrow[]{N \to \infty}  \begin{pmatrix}
\frac{1}{6} & \frac{1}{6} & 0 \\
\frac{1}{6} & \frac{1}{6} & 0 \\
0 & 0 & \frac{1}{3} 
\end{pmatrix} =: E12^{(\infty)}
\end{align*}
Here $E12^{(\infty)}$ assumes again an exemplary role and $E13^{(\infty)}$ as well as $E23^{(\infty)}$ are defined in an analogous manner. One can express these 'limit extreme points' in a more probabilistic manner $EA^{(\infty)} =  \frac{1}{3} \delta_{11} + \frac{1}{3} \delta_{22} + \frac{1}{3} \delta_{33}$, $ER^{(\infty)} =  \left( \frac{1}{3} \delta_{1} + \frac{1}{3} \delta_{2} + \frac{1}{3} \delta_{3} \right) \otimes \left( \frac{1}{3} \delta_{1} + \frac{1}{3} \delta_{2} + \frac{1}{3} \delta_{3} \right) $ as well as $E12^{(\infty)} =  \frac{2}{3}\left( \frac{1}{2} \delta_{1} + \frac{1}{2} \delta_{2} \right) \otimes \left( \frac{1}{2} \delta_{1} + \frac{1}{2} \delta_{2}  \right) + \frac{1}{3} \delta_{3}$ corresponding to the 'Abstract Notation'-column in Table \ref{tab:ModProb} via \eqref{eq:ForTwoMar}. 
\newline
\newline
In the following, $\mathcal{D}_{\infty\textrm{-rep},\overline{\lambda}}$ will denote the convex hull of these 'limit extreme points', i.e., \begin{equation}
\label{eq:Diamond}
\mathcal{D}_{\infty\textrm{-rep},\overline{\lambda}} = \conv \left( \left\{EA^{(\infty)}, ER^{(\infty)}, E12^{(\infty)}, E13^{(\infty)}, E23^{(\infty)}\right\} \right).
\end{equation}
For an illustration of $\mathcal{D}_{\infty\textrm{-rep},\overline{\lambda}}$ see Figure \ref{fig:Diamond}.
\newline
\begin{figure}[htbp]
\centering
\begin{minipage}[t][95mm][t]{0.48\textwidth}
  \centering
  \includegraphics[width=1.0\linewidth]{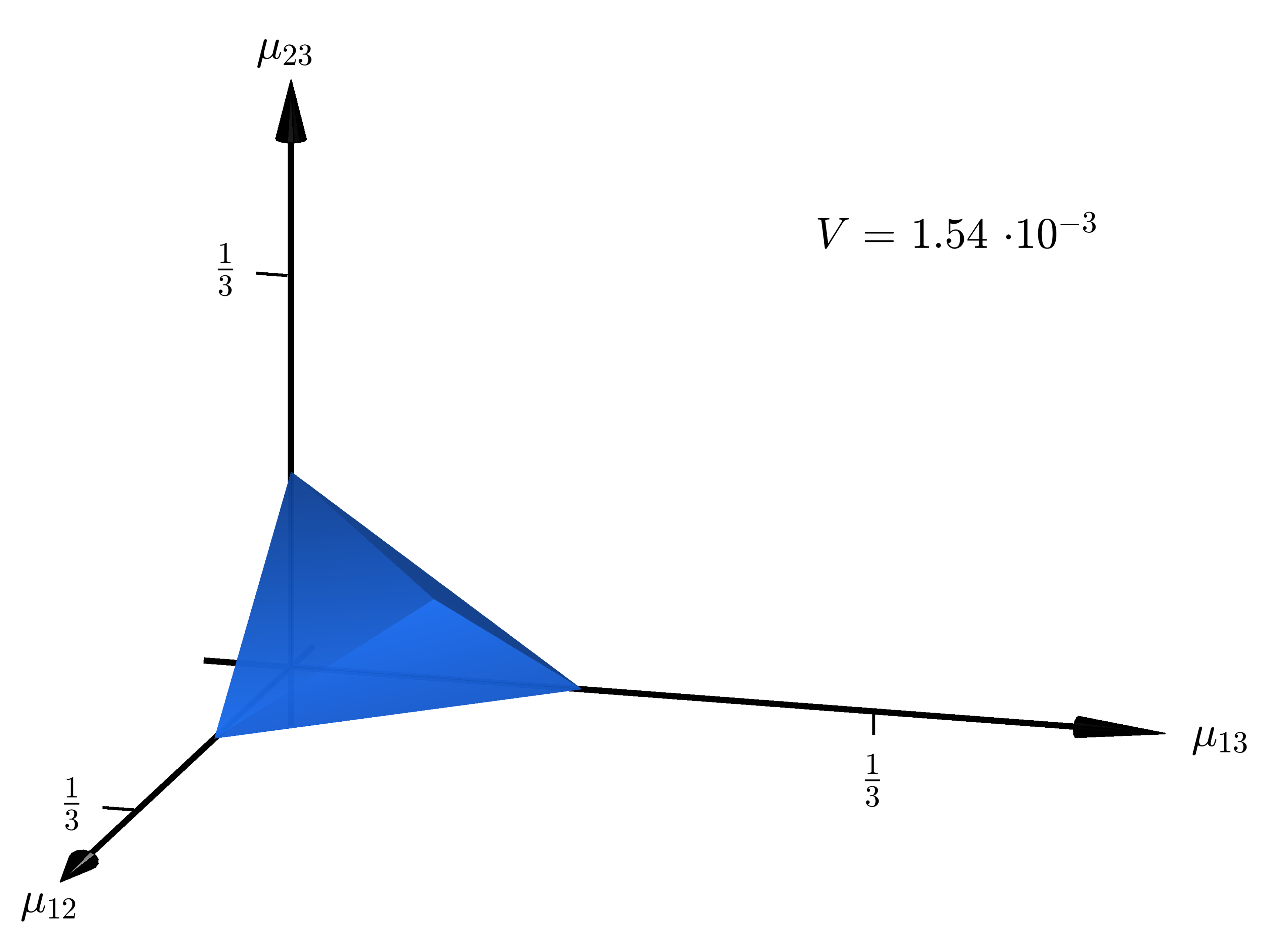}
 \captionof{figure}{The diamond-shaped polytope $\mathcal{D}_{\infty\textrm{-rep},\overline{\lambda}}$, as defined in \eqref{eq:Diamond}, is depicted in blue. The elements $(\mu_{ij})^3_{i,j=1}$ of the polytope are parametrized by their off-diagonal entries $\mu_{12},\mu_{13}$ and $\mu_{23}$. The volume of $\mathcal{D}_{\infty\textrm{-rep},\overline{\lambda}}$ is indicated in the upper-right corner.}
  \label{fig:Diamond}
\end{minipage}%
\hfill
\begin{minipage}[t][95mm][t]{.48\textwidth}
  \centering
  \includegraphics[width=1.0\linewidth]{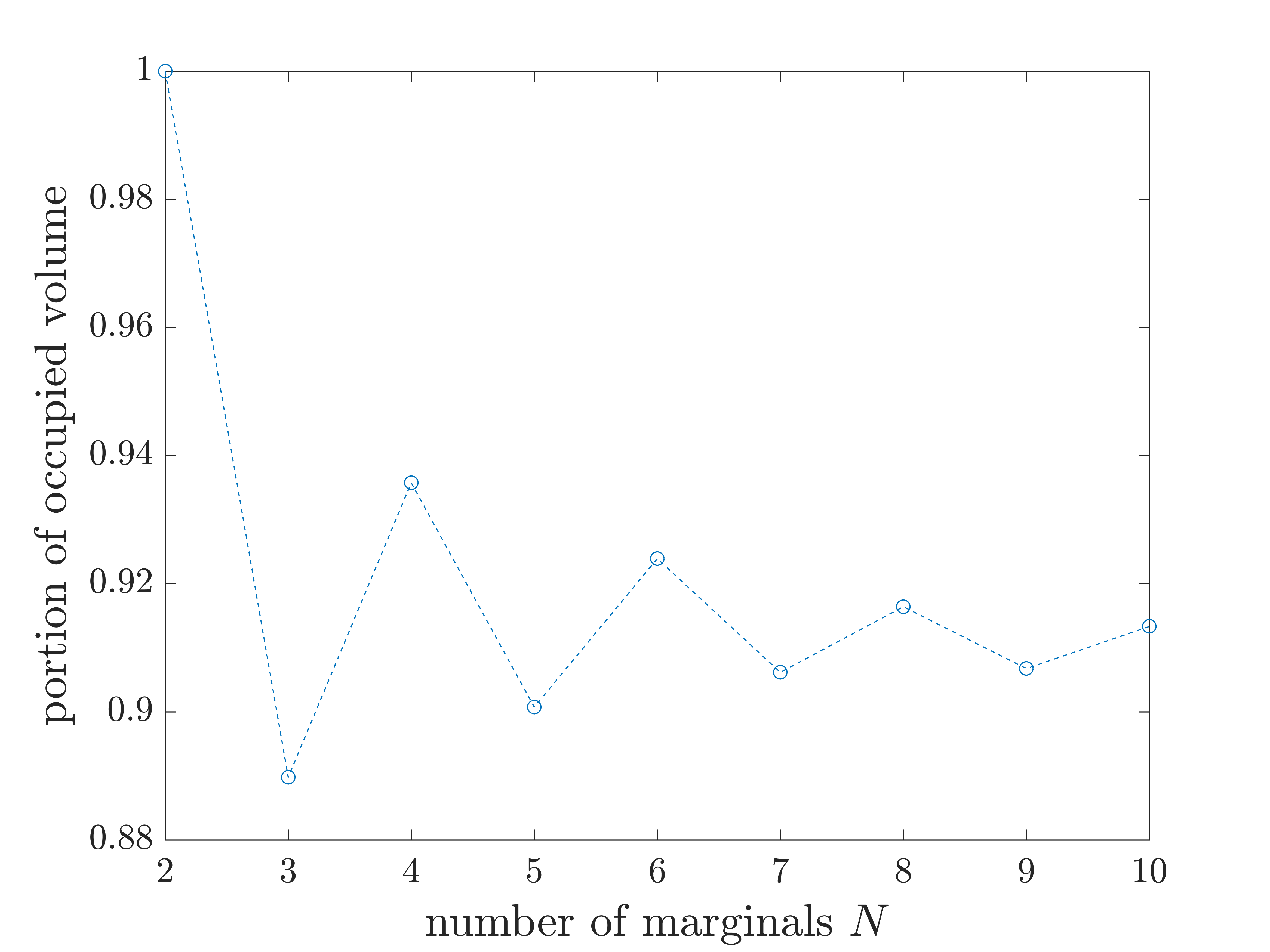}
  \captionof{figure}{The volumetric ratio, reduced Monge polytope to reduced Kantorovich polytope for $N$ marginals and $3$ states, is depicted in dependency of the number of marginals $N$.}
 \label{fig:Osc}
\end{minipage}
\end{figure}
\newline
It was proven in \cite{FMPCK13} that $N$-representability becomes an increasingly stringent condition as $N$ grows, in more detail, $\PNreptwo \subseteq \mathcal{P}_{\hat{N}\textrm{-rep}}\left(X^2\right) $ for any $N \geq \hat{N} \geq 2$. It follows immediately that the reduced Kantorovich polytope for $N$ marginals and $3$ states $\PNreplambda$ is contained in the reduced Kantorovich polytope $\mathcal{P}_{\hat{N}\textrm{-rep},\overline{\lambda}}\left(X^2\right)$ for $\hat{N}$ marginals and $3$ states. As $\PNreplambda$ is closed and convex, $\mathcal{D}_{\infty\textrm{-rep},\overline{\lambda}}$ is a subset of the reduced Kantorovich polytope $\PNreplambda$ for any number $N \geq 2$ of marginals and $3$ sites. In summary we get the following chain of inequalities 
\begin{equation}
\label{eq:ChainofIne}
 \min_{\mu \in \PNreplambda} V[\mu]  \leq \min_{\mu \in \mathcal{P}_{(N+1)\textrm{-rep},\overline{\lambda}}\left(X^2\right)} V[\mu] \leq \dots \leq \min_{\mu \in \mathcal{D}_{\infty\textrm{-rep},\overline{\lambda}}} V[\mu] \leq V[ER^{(\infty)}] ,
\end{equation}
where  $V[\mu] := \int_{X^2} v(x,y)  d\mu(x,y)$.
The inequalities \eqref{eq:ChainofIne} show that for any number of marginals $N \geq 2$ we can find an upper bound of the optimal value in \eqref{eq:ProblemTwo} by computing the objective value of the 'attractive limit extreme point' $EA^{(\infty)}$, the 'repulsive limit extreme point' $ER^{(\infty)}$ and the 'axis limit extreme points' $E12^{(\infty)}$, $E13^{(\infty)}$ as well as $E23^{(\infty)}$ and choosing the smallest one. Note that this improves the in physics common mean field approximation $V[ER^{(\infty)}]$, where one usually considers repulsive pair-costs $v: X \times X \to \mathbb{R}$. 
\newline
\newline 
Finally, we note that the volume portion of the reduced Kantorovich polytope that is occupied by the reduced Monge polytope exhibits oscillatory behavior with decreasing amplitude when interpreted as a function of the number of marginals $N$, see Figure \ref{fig:Osc}. The considered volumetric ratio oscillates around a value above 0.9 where even marginals when directly compared to the odd marginals produce a higher ratio. In the sense that an optimizer in the occupied volume yields the existence of a Monge-solution, the Monge ansatz seems to be 'better' for an even number of marginals. 
\section{Lower Bound on Extremal Coefficients}
\label{sec:ExtremalCoefficients}
The results of this section were achieved in the pursuit of a generalization of Theorem \ref{the:ModProp}. When replacing $c_R$ in \eqref{eq:CostR} with a general repulsive interaction, the optimization process no longer boils down to a maximization of the sum of off-diagonal entries. Even subtle differences in the off-diagonal cost coefficients could influence the optimization. Still, our intuition tells us, that - if the repulsion is strong enough - it is still best to distribute the $\frac{1}{N}$-quantized entries as uniformly as possible among the given $\ell$ sites. But what is the reason behind the non-existence of cases for which it is best to attain an unevenly distributed configuration with a very small probability? The answer is given in the following theorem.
\begin{thm}[lower bound on extremal coefficients]
\label{the:LB}
Assume $\alpha$ to be an extreme point of the polytope $\Pcoef$, as defined in \eqref{eq:Pcoef}. Then each nonzero entry of $\alpha$ is bigger than or equal to $\frac{1}{\ell N^{\ell-1}}$, i.e., for all $\nu\in\left\{1,\ldots, {N+\ell-1\choose N} \right\}$
\begin{equation*}
\alpha_{\nu} \neq 0 \rightarrow \alpha_{\nu} \geq \frac{1}{\ell N ^{\ell-1}}.
\end{equation*}
\end{thm}
\begin{proof}
Let $\alpha$ be an arbitrary extreme point of the polytope $\Pcoef$. Then - as discussed in Section \ref{sec:ClassKan} - the nonzero entries of $\alpha$ indicate a selection of columns of $A$ that are linearly independent. \\
In case the cardinality of this selection is strictly less than $\ell$, we add a suitable choice of elements of $\PQN$ in order to form a basis $B$ of $\mathbb{R}^\ell$. Otherwise the present selection already constitutes such a basis $B$. In the following $A_B$ will denoted the $\ell \times \ell$-submatrix of $A$ that consists of exactly those columns that are contained in $B$, accordingly $\alpha_B$ is the subvector of $\alpha$ that is reduced to those entries that correspond to elements of $B$. By construction of $B$, 
\begin{equation}
\label{eq:InvertibleEquaion}
A_B\alpha_B = \overline{\lambda}
\end{equation}
holds. As $A_B$ is invertible, \eqref{eq:InvertibleEquaion} is equivalent to 
\begin{equation}
\label{eq:InvertibleEquationTwo}
\alpha_B=A_B^{-1}\overline{\lambda}=\frac{1}{\mathrm{det}(A_B)} C^T \overline{\lambda}
\end{equation}
with $C$ denoting the cofactor matrix of $A_B$. Each entry of $C$ is the product of a sign factor and the determinant of an $(\ell-1)\times (\ell-1)$-submatrix of $A_B$. As the columns of $A_B$ represent $\frac{1}{N}$-quantized probability measures, the entries of $A_B$ are integer multiples of $\frac{1}{N}$ and therefore the entries of $C$ are integer multiples of $\frac{1}{N^{\ell-1}}$. Utilizing Hadamard's inequality, one easily sees that $|\mathrm{det}(A_B)|\leq 1$ holds. Finally we recall that each of the entries of $\overline{\lambda}$ is given by $\frac{1}{\ell}$. Consequently, for the $i$-th entry $(\alpha_B)_i$ of $\alpha_B$ (with $i\in\{1,\ldots,\ell\}$) the following holds
\begin{equation*}
(\alpha_B)_i=|(\alpha_B)_i|=\frac{1}{|\mathrm{det}(A_B)|}|(C^T\overline{\lambda})_i|\geq |(C^T\overline{\lambda})_i|=\frac{k_i}{\ell N^{\ell-1}}
\end{equation*}
with $k_i$ being some non-negative integer. As $k_i$ is zero if and only if $(\alpha_B)_i$ is zero the proof of Theorem \ref{the:LB} is complete. 
\end{proof}
\noindent We do not expect the lower bound established in Theorem \ref{the:LB} to be sharp. The key to unlocking an improvement and even potentially a quantization of the extremal coefficients lies in a deeper analysis of \eqref{eq:InvertibleEquationTwo}, particulary the entries of the cofactor matrix $C$ and how they relate to one another as well as the determinant of $A_B$ itself. This analysis, however, lies beyond the scope of this paper and we consider it to be subject to future research.  
\\
We consider Theorem \ref{the:LB} to be the bedrock the upcoming theorem is built upon.
\begin{thm}[support of optimal couplings regarding 'repulsive OT problems'] 
\label{the:SupOptC}
Let $N\geq 2$ be the number of marginals and $\ell\geq 2$ the number of states. Consider the OT problem
\begin{multline}
\label{eq:ThmProbFor}
\mathrm{Minimize} \int_{X^N} \sum_{1\leq i<j \leq N} v(x_i,x_j)\mathrm{d}\gamma(x_1,\ldots,x_N) \\
\mathrm{\ over \ } \gamma \in \Psym \mathrm{\ subject \ to \ } \gamma \mapsto \overline{\lambda},
\end{multline} 
with $v:X^2\to \mathbb{R}$ being a symmetric pair-potential that fulfills $v(x,x)=B$ for all $x\in X$ and some constant
\begin{equation*}
B>\left(\max_{x_i \neq x_j} v(x_i,x_j)\right) + N^{\ell+1}\ell \left(\left(\max_{x_i \neq x_j} v(x_i,x_j)\right)-\left(\min_{x_i \neq x_j} v(x_i,x_j)\right)\right).
\end{equation*}
Let $\gamma$ be an optimizer of the considered problem. If $\gamma$ gives mass to a point $(x_1,\ldots,x_N)\in X^N$ then each state $a \in X$ appears either $\left\lfloor\frac{N}{ \ell }\right\rfloor$ or $\left\lceil\frac{ N }{\ell}\right\rceil$ times in the given tupel, i.e., $|\{i:x_i=a\}|\in \left\{ \left\lfloor\frac{N}{ \ell }\right\rfloor, \left\lceil\frac{ N }{\ell}\right\rceil \right\}$ for all $a\in X=\{a_1,\ldots,a_\ell\}$. \\
(Note that in case $\frac{N}{\ell}=k$ for some $k\in \mathbb{N}$ each state appears exactly $k$ times.)
\end{thm}
\begin{proof}
Based on the proof of Theorem \ref{the:ModProp} assertion b) we start off by rewriting the objective function, i.e., the function that is to be minimized. Successively using the pairwise symmetric structure of the cost function, identifying the function $v:X^2 \to \mathbb{R}$ as well as the measure $M_2 \gamma$ on $X^2$ with their respective matrix counterparts $(v_{ij})^\ell_{i,j=1}, \left((M_2 \gamma)_{ij}\right)^\ell_{i,j=1}$ with $v_{ij}:=v(a_i,a_j), (M_2 \gamma)_{ij}:=M_2 \gamma(\{(a_i,a_j)\})$ and finally utilizing the marginal constraint allows us to write the objective value of any admissible $\gamma$ independent of its diagonal-entries.
\begin{align*}
\int_{X^N}\sum_{1\leq i < j \leq N} v(x_i,x_j)\mathrm{d}\gamma (x_1,\ldots,x_N) & = {N \choose 2} \int_{X^2}v(x,y)\mathrm{d}(M_2 \gamma)(x,y)\\
& = {N \choose 2} \sum^\ell_{i,j=1}v_{ij}(M_2 \gamma)_{ij}\\
& = {N \choose 2} \Bigg( B + \sum^\ell_{\substack{i,j=1 \\ i \neq j}} (v_{ij}-B) (M_2 \gamma)_{ij}\Bigg)\\
& = {N \choose 2}B + {N \choose 2} \sum^\ell_{\substack{i,j=1 \\ i \neq j}} (v_{ij}-B) (M_2 \gamma)_{ij}
\end{align*}
Now one easily sees that 
\begin{multline}
\label{eq:ProofProbFor}
\mathrm{Maximize} \ C_v[\gamma]:= \sum^\ell_{\substack{i,j=1 \\ i \neq j}} (B-v_{ij}) (M_2 \gamma)_{ij} \\
\mathrm{over} \ \gamma\in \Psym \mathrm{ \ subject \ to \ } \gamma \mapsto \overline{\lambda},
\end{multline}
is an equivalent problem formulation, in the sense that any admissible $\gamma$ is optimal with respect to \eqref{eq:ThmProbFor} if and only if it is optimal with respect to the problem at hand. \\
\\
Now let $\gamma$ be such an optimizer solving the problem stated in the considered theorem \eqref{eq:ThmProbFor} as well as the problem given by \eqref{eq:ProofProbFor}. Firstly, we assume $\gamma$ to be an extreme point of the symmetric Kantorovich polytope for $N$ marginals and $\ell$ states, i.e., the set of admissible trial states. Then by Corollary \ref{cor:NewFor} the to $\gamma$ corresponding coefficient vector $\alpha=R^{-1}\gamma$ is itself an extreme point of the polytope $\Pcoef$. $\alpha$ then fulfills 
\begin{equation}
\label{eq:CCGamma}
\gamma=R\alpha=\sum_{\lambda\in \mathcal{P}_{\frac{1}{N}}(X)}\alpha_\lambda \psi_N (\lambda).
\end{equation}
Recall that $\psi_N(\lambda)$ denotes the (uniquely determined) symmetrized Dirac-measure \eqref{eq:ExtrSym} with one-point marginal $\lambda$. Readers feeling lost regarding the present notation are advised to take a look back at Section \ref{sec:ClassKan}, particularly pages 7-8. By linearity of $C_v[\cdot]$ it holds
\begin{equation*}
C_v[\gamma]=\sum_{\lambda \in \mathcal{P}_{\frac{1}{N}}(X)}\alpha_\lambda C_v [\psi_N (\lambda)].
\end{equation*}
As already stated in \eqref{eq:ForTwoMar}, $M_2 \psi_N(\lambda)=\frac{N}{N-1}\lambda \otimes \lambda - \frac{1}{N-1}(\mathrm{id,id})\#\lambda $ holds yielding 
\begin{equation*}
C_v[\psi_N (\lambda)]=\frac{N}{N-1}\sum^\ell_{\substack{i,j=1 \\ i \neq j}}(B-v_{ij})\lambda_i \lambda_j.
\end{equation*}
In the following we will take a closer look at the objective value $C_v[\cdot]$ of the extremal symmetric probability measures $\psi_N(\lambda)$ by investigating the behaviour of the function $f:\mathcal{P}_{\frac{1}{N}}(X)\to \mathbb{R}$ given by 
\begin{equation*}
f(\lambda) = C_v [\psi_N(\lambda)].
\end{equation*}
We split $f$ into a dominant and a submissive term, denoted by $d$ and $s$, respectively. 
\begin{equation*}
f(\lambda) = d(\lambda) + s(\lambda)
\end{equation*}
\begin{equation*}
\mathrm{with} \ d(\lambda):=\frac{N}{N-1}\sum^{\ell}_{\substack{i,j=1 \\ i \neq j}}(B-v_{i^*j^*})\lambda_i\lambda_j \mathrm{\ and \ } s(\lambda):=\frac{N}{N-1}\sum^{\ell}_{\substack{i,j=1 \\ i \neq j }} (v_{i^* j^*}-v_{ij})\lambda_i \lambda_j,
\end{equation*}
\begin{equation*}
 \mathrm{where \ } i^*,j^* \mathrm{ \ are \ indices \ fulfilling \ } v_{i^*j^*}=\max_{\substack{i,j \\ i \neq j}}v_{ij}. 
 \end{equation*}
 Hereby, a more compact manner to write $d$ is given by 
\begin{equation*}
d(\lambda)=\lambda^TD\lambda \ \ \mathrm{for } \ \ D=(D_{ij})\in \mathbb{R}^{\ell \times \ell} \ \mathrm{defined \ by \ } D_{ij} = \begin{cases} 0 & \mathrm{for} \ i=j \\ B' & \mathrm{for} \ i \neq j \end{cases}
\end{equation*}
\begin{equation}
\label{eq:DefBPrime}
\mathrm{with \ } B':= \frac{N}{N-1}(B-v_{i^*j^*}).
\end{equation}
By Taylor-expanding $d$ at the uniform probability measure $\overline{\lambda}$ and utilizing the geometry of the matrix $D$ embodied by its eigenspaces one easily sees that 
\begin{equation*}
d(\lambda)=d(\overline{\lambda})-B'|\lambda-\overline{\lambda}|^2
\end{equation*}
with $|\cdot|$ denoting the Euclidean norm in $\mathbb{R}^\ell$. Now elementary arguments and calculations reveal the following. $\hat{\lambda}$ maximizes $d(\cdot)$ among the $\frac{1}{N}$-quantized probability measures $\mathcal{P}_{\frac{1}{N}}(X)$ if and only if $r$ entries of $\hat{\lambda}$ are given by $\frac{m+1}{N}$ and the remaining $(\ell-r)$ entries correspond to $\frac{m}{N}$ with $m:=\left\lfloor\frac{N}{ \ell }\right\rfloor$ and $r:= N-m\ell$. Any deviating $\lambda\in \PQN$, i.e., any $\lambda$ not obeying these restrictions regarding its entries, decreases the value of $d(\cdot)$ by at least $\frac{B'}{N^2}$. That is, for a rule-abiding $\hat{\lambda}$ and a deviating $\lambda$ it holds, $d(\hat{\lambda})-d(\lambda)\geq \frac{B'}{N^2}$.\\
 \\
Now we return to the consideration of $\gamma$ which is assumed to be a solution of the problems \eqref{eq:ThmProbFor} and \eqref{eq:ProofProbFor} as well as (for now) an extreme point of both sets of admissible trial states. $\alpha$ denotes the coefficient vector underlying the representation of $\gamma$ as a convex combination of extremal symmetric probability measures as given in \eqref{eq:CCGamma}. As already mentioned above, $\alpha$ is itself an extreme point of $\Pcoef$.\\
The next step is to derive a contradiction starting from the assumption $\alpha_\lambda>0$ for a $\lambda\in \PQN$ that deviates from the 'entry laws' described above. In the following this deviating $\frac{1}{N}$-quantized probability measure will be denoted by $\tilde{\lambda}$. Let further $\hat{\gamma}= \sum_{\lambda\in \PQN}\hat{\alpha}_\lambda \psi_N (\lambda) $ be an admissible trial state whose coefficient vector $\hat{\alpha}$ only gives mass to law-abiding $\lambda \mathrm{s} \in \PQN$. It is easy to see that such a state always exists. Then - with $i_*,j_*$ denoting indices that fulfill $v_{i_* j_*}= \min_{\substack{i,j \\ i \neq j}}v_{ij}$ and $\hat{\lambda}$ denoting an arbitrary law-abiding $\lambda \in \PQN$ - it holds 
\begin{align*}
C_v[\hat{\gamma}]-C_v[\gamma] & = \sum_{\lambda\in \PQN} \hat{\alpha}_\lambda (d(\lambda)+s(\lambda))-\sum_{\lambda\in \PQN}\alpha_\lambda (d(\lambda)+s(\lambda))\\
&= \sum_{\lambda\in \PQN}(\hat{\alpha}_\lambda- \alpha_\lambda)s(\lambda)+ \sum_{\lambda\in \PQN} \hat{\alpha}_\lambda d(\lambda) - \sum_{\lambda\in \PQN} \alpha_\lambda d(\lambda)\\
& \geq - \frac{N}{N-1}(v_{i^* j^*}-v_{i_* j_*})+d(\hat{\lambda}) - \frac{1}{\ell N^{\ell-1}}d(\tilde{\lambda})-\left(1-\frac{1}{\ell N^{\ell-1}}\right) d(\hat{\lambda})\\
&= - \frac{N}{N-1}(v_{i^* j^*}-v_{i_* j_*})+ \frac{1}{\ell N^{\ell-1}}(d(\hat{\lambda}) - d(\tilde{\lambda}))\\
&\geq - \frac{N}{N-1}(v_{i^* j^*}-v_{i_* j_*})+ \frac{B'}{\ell N^{ \ell+1}}.
\end{align*}
Hereby, elementary estimates gave us a lower bound on how much $\hat{\gamma}$ might loose to $\gamma$ regarding the submissive function $s$. We further took advantage of the fact that all law-abiding $\lambda$s produce the same dominant value $d(\hat{\lambda})$. The key step, however, is to establish an upper-bound on the portion of the objective value of $\gamma$ which corresponds to the dominant function $d$. This bound is based on Theorem \ref{the:LB} as well as the priorly established fact that the law-abiding elements of $\PQN$ are exactly those that maximize $d(\cdot)$ and $\tilde{\lambda}$ falls short by at least $\frac{B'}{N^2}$ in comparison. 
\\
Combining the definition of $B'$ in \eqref{eq:DefBPrime} with the assumption on $B$ now yields $C[\hat{\gamma}]>C[\gamma]$. This finalizes the contradiction. Hence, positivity of a coefficient $\alpha_\lambda$ implies that $\lambda$ fulfills the 'entry laws'. Recalling the representation \eqref{eq:CCGamma} of $\gamma$ as well as the definition of $\psi_N$ now reveals that the statement of Theorem \ref{the:SupOptC} is true for the extremal $\gamma$. \\
As any non-extremal optimizer may be written as a convex combination of extremal ones the proof of Theorem \ref{the:SupOptC} is complete.
\end{proof}
\noindent The following consequence of Theorem \ref{the:SupOptC} is a generalization of Theorem \ref{the:ModProp} assertion b). 
\begin{coro}
\label{cor:OptPeakGen}
We consider the reduced multi-marginal OT problem \eqref{eq:ProblemTwo} for $N\geq 2$ marginals and $\ell=3$ sites. \\
For any symmetric cost function $v:X\times X \to \mathbb{R}$ that fulfills $v(x,x)=B$ for all $x\in X$ and some constant 
\begin{equation*}
B>\left(\max_{x_i \neq x_j} v(x_i,x_j)\right) + N^{\ell+1}\ell \left(\left(\max_{x_i \neq x_j} v(x_i,x_j)\right)-\left(\min_{x_i \neq x_j} v(x_i,x_j)\right)\right)
\end{equation*}
the unique minimizer is given by $ER^{(N)}$.
\end{coro}
\begin{proof}
We start off with a change of venue and consider the 'unreduced' problem version \eqref{eq:ThmProbFor}. With the number of states $\ell$ being equal to three, Theorem \ref{the:SupOptC} reduces the points an optimizer might give mass to already to such an extent that the optimizer's uniqueness follows. The two-point marginal of said optimizer is given by $ER^{(N)}$ which inherits the status of a unique optimizer from its representing measure. 
\end{proof}
\noindent Recall that $ER^{(N)}$ is of Monge-type. Consequently, Corollary \ref{cor:OptPeakGen} provides a class of repulsive costs yielding a unique Monge optimizer. All of these examples, however, are set in a finite state space $X$ consisting only of three elements. \\
The following discussion concerns a lift of Corollary \ref{cor:OptPeakGen} to a given $\ell'>3$. So now the focus lies on the question whether or not the suitably adapted statement of Corollary \ref{cor:OptPeakGen} holds for any number of marginals $N\geq 2$ when paired with $\ell'$. Note that in the introduction specific 'pairable' $N$'s for any $\ell'>3$ are given.\\
Already when increasing the number of states to $\ell=4$ and keeping the number of marginals at $N=2$ the representing measure of the 'peak' that is $ER^{(N)}$ blossoms into multiple extreme points of the symmetric Kantorovich polytope for $2$ marginals and $4$ states. That is, there exist multiple extreme points of the symmetric Kantorovich polytope for $2$ marginals and $4$ states that are in line with the support-restrictions provided by Theorem \ref{the:SupOptC}. Hence, the proof of Corollary \ref{cor:OptPeakGen} can not be lifted to the case of $\ell=4$ states. However, we suspect that each one of the 'in line'-extreme points of the symmetric Kantorovich polytope is of Monge-type for $\ell=4$ as well as $\ell=5$ states and any number of marginals $N\geq 2$. So even though one is not able to identify a specific extreme point as unique optimizer we believe that the support-restriction suffices to at least classify the optimizer(s) as Monge. When increasing the number of states to $\ell=6$ this door closes as 
\begin{equation}
\label{eq:NonMExt}
\frac{1}{4} S \delta_{123} + \frac{1}{4} S \delta_{145} + \frac{1}{4} S \delta_{246} + \frac{1}{4} S \delta_{356}
\end{equation}
is an 'in-line' extreme point of the symmetric Kantorovich polytope for $N=3$ marginals and $\ell=6$ states that is not of Monge-type.
\\
An interesting question to pose is now whether or not non-Monge states of form \eqref{eq:NonMExt} remain extremal when moving from $N$-point to two-point costs. A more general question - we believe - this section accumulates to is the following. Given a symmetric pair-cost $v$ that fulfills the condition on its diagonal stated in Theorem \ref{the:SupOptC}, what geometric attributes of the off-diagonal part of $v$ decide whether the optimizer is of Monge-type or not - provided it exists a unique optimizer.

\bibliographystyle{plain}

\bibliography{bibli}
\end{document}